\setlist[enumerate]{label=(\roman*)}
\renewcommand\d{\mathrm{d}}
\newcommand\N{\mathbb{N}}
\newcommand\R{\mathbb{R}}
\DeclareMathAlphabet{\mathpzc}{OT1}{pzc}{m}{it}
\newcommand\GG{\mathcal{G}}
\newcommand\LL{\mathscr{L}}
\newcommand\DD{\mathcal{D}}
\newcommand\MM{\mathcal{M}}
\newcommand\NN{\mathcal{N}}
\newcommand\PP{\mathcal{P}}
\newcommand\RR{\mathcal{R}}
\newcommand{\weakly}{\rightharpoonup}
\newcommand{\oset}[3][0ex]{%
  \mathrel{\mathop{#3}\limits^{
    \vbox to#1{\kern-2\ex@
    \hbox{$\scriptstyle#2$}\vss}}}}
\newcommand{\compactly}{\oset{\textup{c}}{\hookrightarrow}}
\newcommand{\WOT}{\smash{\overset{\normalfont\textsc{wot}}{\to}}}
\DeclareMathOperator{\tr}{tr}
\DeclarePairedDelimiter\parens()
\DeclarePairedDelimiter\bracks[]
\DeclarePairedDelimiter\abs{\lvert}{\rvert}
\DeclarePairedDelimiter\norm{\lVert}{\rVert}
\DeclarePairedDelimiterX\innerp[2](){#1,#2}
\DeclarePairedDelimiterX\dual[2]{\langle}{\rangle}{#1,#2}
\providecommand\given{\nonscript\;\delimsize|\nonscript\;\mathopen{}}
\DeclarePairedDelimiterX\set[1]\{\}{#1}
\crefname{assumption}{Assumption}{Assumptions}
\crefname{ALC@unique}{Step}{Steps}
\title{Semismoothness for Solution Operators of 
Obstacle-Type Variational Inequalities with Applications in Optimal Control\thanks{Submitted to the editors DATE.
\funding{This research was supported by the German Research Foundation (DFG) under grant number WA 3636/4-2
within the priority program ``Non-smooth and Complementarity-based Distributed Parameter
Systems: Simulation and Hierarchical Optimization'' (SPP 1962).%
}}}
\author{Constantin Christof\thanks{%
Technische Universit\"at M\"unchen,
Faculty of Mathematics, M17,
85748 Garching bei M\"unchen, Germany,
\url{https://www-m17.ma.tum.de/Lehrstuhl/ConstantinChristof},
\email{christof@ma.tum.de}%
}%
\and
Gerd Wachsmuth\thanks{%
Brandenburgische Technische Universit\"at Cottbus-Senftenberg, 
Institute of Mathematics, 
03046 Cott\-bus, Germany, 
\url{https://www.b-tu.de/fg-optimale-steuerung},
\email{gerd.wachsmuth@b-tu.de},
ORCID: \href{https://orcid.org/0000-0002-3098-1503}{0000-0002-3098-1503}%
}%
}
\begin{document}

\maketitle

\begin{abstract}
We prove that solution operators of elliptic obstacle-type variational inequalities 
(or, more generally, locally Lipschitz continuous functions possessing certain pointwise-a.e.\ convexity properties)
are Newton differentiable when considered as maps between suitable Lebesgue spaces
and equipped with the strong-weak Bouligand differential 
as a generalized set-valued derivative. It is shown that this Newton differentiability allows
to solve optimal control problems with $H^1$-cost terms and one-sided pointwise control constraints by 
means of a semismooth Newton method. The superlinear convergence of the resulting algorithm 
is proved in the infinite-dimensional setting and its mesh independence 
is demonstrated in numerical experiments. We expect that the findings of this paper 
are also helpful for the design of numerical solution procedures for quasi-variational inequalities 
and the optimal control of obstacle-type variational problems. 

\end{abstract}

\begin{keywords}
obstacle problem, variational inequality, 
Newton differentiability, semismoothness, 
optimal control, 
pointwise convexity, Bouligand differential, control constraints,  nonsmoothness
\end{keywords}

\begin{AMS}
35J86, 35J87, 49J52, 49K20, 46G05, 49M15
\end{AMS}


\section{Introduction and summary of results}
\label{sec:1}
Due to its importance for the analysis of 
generalized Newton methods
and the study of solution algorithms for nonsmooth optimization and optimal control problems, 
the concept of 
Newton differentiability (a.k.a.\ semismoothness) 
has received considerable attention in the literature. 
We refer to, e.g., \cite{ChenNashedQi2000,Facchinei2003,Hintermueller2002,Izmailov2014,Kummer1988,Qi1999,Schiela2008,Ulbrich2011},
which discuss semismooth Newton methods for 
equations in finite- and infinite-dimensional spaces,
and to \cite{Bihain1984,Mifflin1977,Mifflin1977-2,SchrammZowe1992}, which are concerned with the minimization of semismooth functions.
Despite this prominent role that the notion of Newton differentiability plays in the field of nonsmooth analysis and optimization, 
contributions which establish Newton differentiability properties for functions between infinite-dimensional spaces
that arise as control-to-state mappings 
or as parts of stationarity systems in optimal control applications are comparatively scarce. 
The result that is most commonly used in this field is 
the well-known fact that Nemytskii operators which are induced by a Lipschitz continuous, semismooth
function $f\colon \R \to \R$ are 
Newton differentiable as mappings between Lebesgue spaces in the presence of a norm gap.
See, for example, \cite{Reyes2005,Hintermueller2008,Roesch2011,Schiela2008,Ulbrich2011}, where this Newton differentiability property is 
exploited to set up semismooth Newton methods for control- and state-constrained optimal control problems, 
and \cite{Hintermueller2016,Kunisch2021}, 
where a similar approach is used for the analysis of regularized variational inequalities. 
For operators 
that are not of Nemytskii type (and in which superposition operators are not the sole source of nonsmoothness, 
see, e.g., the solution map of the partial differential equation considered in \cite{Christof2018})
much less is known. One of the few contributions that accomplishes to prove Newton differentiability properties for 
a nontrivial example of such a function is \cite{Brokate2020}, 
which establishes the Newton differentiability of the scalar play and stop operator 
(and thus of  the solution map  of a prototypical rate-independent evolution variational inequality) by means 
of an explicit solution formula involving the accumulated maximum.
In \cite{Brokate2019}, the findings of \cite{Brokate2020} are extended 
to a parabolic PDE system involving the scalar play.
The nonexistence
of further results and of a comprehensive theory on the Newton differentiability of nonsmooth operators arising in the field of
optimal control is rather unsatisfactory---in particular in view of the multitude of contributions 
on the semismoothness of functions in the finite-dimensional setting, see, e.g., \cite{Bolte2009,OutrataBook1998,Qi1999}
and the references therein.  

The aim of this paper is to demonstrate that, beside superposition operators 
and the scalar play and stop considered in \cite{Brokate2020,Brokate2019}, there is a further large class of operators
arising in optimal control applications that are Newton differentiable when endowed with a 
suitable (and computable) set-valued derivative, namely, 
solution mappings of obstacle-type variational inequalities (VIs) with unilateral constraints.
Such functions  arise, for instance, when optimal control problems governed by partial differential equations (PDEs) 
with $H^1$-controls are studied, see \cref{sec:5,sec:6}, or in the field of optimal control of contact problems,
see \cite{HertleinRaulsUlbrichUlbrich2022,Kunisch2021}. 
The main idea of our analysis is to exploit that solution maps of obstacle-type VIs
possess pointwise-a.e.\ convexity properties which, in combination with certain compact embeddings, 
immediately yield Newton differentiability results when the strong-weak Bouligand differential 
is used as a generalized set-valued derivative, see \cref{def:Bsubdifferential,th:mainabstractsemismooth} below. Along these lines,
one obtains that Newton differentiability is readily available 
for solution operators of VIs like the classical obstacle problem or the scalar Signorini problem
 when these functions are considered as maps between suitable 
Lebesgue spaces. 
The content of this paper can be summarized as follows:

In \cref{sec:2}, we discuss Newton differentiability properties of pointwise-a.e.\ convex operators
on a general abstract level. Here, we prove that such functions are indeed 
Newton differentiable when endowed with the 
strong-weak Bouligand differential 
and considered as functions between suitable Lebesgue spaces. For the main result of this section, 
we refer the reader to \cref{th:mainabstractsemismooth}.

In \cref{sec:3}, we illustrate that the abstract results of \cref{sec:2} can be applied 
to the solution operators of obstacle-type VIs. During the course of the
analysis of this section, we also generalize well-known truncation arguments of Stampacchia, 
see \cref{lemma:truncation,cor:VIsemismooth1}
for the main results on this topic. 

\Cref{sec:4} contains two tangible examples of obstacle-type VIs that are covered by our analysis:
the scalar Signorini problem and the classical obstacle problem. Here, we also recall a recent 
characterization result for the strong-weak Bouligand differential of the solution map
of the classical obstacle problem which, in combination with the analysis of \cref{sec:3},
provides a framework that can be readily used for the design of semismooth Newton methods 
or comparable algorithms
in practical applications. 

In \cref{sec:5}, we consider an example of such an application, namely,
the numerical solution of an optimal control problem 
with unilateral control constraints. For this problem, we design a 
semismooth Newton method in function space and establish its local superlinear convergence
in infinite dimensions, see \cref{th:NewtonConvergence}. 

\Cref{sec:6} concludes the paper with numerical experiments
which illustrate that the algorithm developed in \cref{sec:5} 
indeed converges superlinearly and, due to the established convergence in the function space setting,
mesh-independently. This section also contains some comments on further applications of our results,
e.g., in the context of optimal control problems governed by 
obstacle-type VIs and the field of quasi-variational inequalities.

Before we begin with our analysis, 
we would like to point out that 
techniques very similar to those used in \cref{sec:2} of this paper have recently also been employed in 
\cite{BrokateUlbrich2022} for the study of parametric semi\-smooth functions,
see \cite[sections~3 and 4]{BrokateUlbrich2022}. The main difference between our approach and that of
\cite{BrokateUlbrich2022} is that our analysis is tailored to applications 
in the field of optimal control and the area of obstacle-type VIs.
This is in particular emphasized by the 
nested Banach space structure that we consider (see \cref{ass:functionalanalytic,ass:VIs})
and the generalized differential that we work with---the 
already mentioned strong-weak 
Bouligand differential, see \cref{def:Bsubdifferential}. 
This differential arises on the operator-theoretic level as an immediate consequence of Rademacher's theorem
when locally Lipschitz continuous 
control-to-state mappings are considered, see \cref{th:infRademacher},
and possesses several advantageous properties (e.g., regarding chain rules and 
adjoint-based approaches) that make it
the appropriate choice for many optimal control applications, see \cref{sec:5,sec:6}. 
Moreover, for several solution operators (e.g., those of nonsmooth semilinear PDEs, the classical obstacle problem, 
and the bilateral obstacle problem), formulas for certain elements of the 
strong-weak Bouligand differential or even full characterization results 
have recently been obtained in the literature, see \cite{Christof2018,RaulsUlbrich2019,RaulsUlbrich2021,Rauls2020}.
Generalized derivatives of strong-weak Bouligand type are thus readily available 
and can, in combination with the semismoothness results established in 
\cref{th:mainabstractsemismooth,cor:VIsemismooth1} of the present paper,
immediately be used for setting up numerical solution algorithms for optimal control problems, cf.\
the semismooth Newton method developed in \cref{sec:5}. The generalized differential 
studied in \cite[section~4, Equation~4.12]{BrokateUlbrich2022}, which relies on pointwise measurable selections, 
is less tangible in this regard---in particular in the context of obstacle-type variational problems. 
We remark that, by restricting the attention 
to Bouligand generalized derivatives, we are also able to completely avoid
working with measurable selectors and the assumptions that they require, cf.\
\cite[sections~4 and 5]{BrokateUlbrich2022} and the proof of \cref{th:mainabstractsemismooth}. 
This allows us in particular to also prove the Newton differentiability of, 
for instance,
the solution operator $S\colon L^2(\Omega) \to L^2(\Omega)$, $u \mapsto y$,
of the classical obstacle problem in situations in which the 
functions $S(u) \in L^2(\Omega)$ do not possess continuous representatives and in which,
as a consequence, 
the Carathéodory conditions or local pointwise Lipschitz estimates cannot be satisfied by the function
$L^2(\Omega) \times \Omega \ni (u, \omega) \mapsto S(u)(\omega) \in \R$, 
see \cite[Equations~4.6, 4.7, 4.22]{BrokateUlbrich2022}, \cref{subsec:obstacle},
and \cref{th:mainabstractsemismooth}.
The downside of our approach in comparison with that of \cite{BrokateUlbrich2022} is, 
of course, that it only applies to pointwise-a.e.\ convex operators, cf.\ \cite[section~4]{BrokateUlbrich2022}.

\subsection{Remarks on the notation}%
We use the symbols $\|\cdot\|$, $(\cdot, \cdot)$, and $\langle\cdot,\cdot\rangle$
to denote norms, scalar products, and dual pairings, respectively, 
with a subscript indicating which spaces this notation is referring to.
Strong and weak convergence are denoted by the arrows $\to$ and $\weakly$, respectively.
Given two normed spaces
$X$
and
$Y$
satisfying $X \subset Y$, we write $X \hookrightarrow Y$
if $X$ is continuously embedded into $Y$, i.e., if the inclusion map $\iota: X \to Y$, $x \mapsto x$,
is a linear and
continuous function. If the inclusion map is even compact, then we say that $X$ is 
compactly embedded into $Y$ and write $X \compactly Y$. 
With
$\LL(X,Y)$,
we denote the space of all linear and continuous functions
on a normed space
$X$
with values in
$Y$.
In the special case
$Y = \R$,
$X^* := \LL(X,\R)$ denotes the topological dual of $X$.
Given a sequence $\{G_n\} \subset \LL(X,Y)$, we say that 
$G_n$ converges in the weak operator topology (WOT) to $G \in \LL(X,Y)$,
in symbols $\smash{G_n \WOT G}$, if $G_n z \weakly Gz$ holds in $Y$ for all $z \in X$. 
In addition to these conventions, new notation is introduced in the following sections 
wherever necessary.
These symbols are explained upon their first appearance.


\section{Newton differentiability of pointwise-a.e.\ convex operators}%
\label{sec:2}%
In this section, we prove general Newton differentiability results for maps that
are (in an appropriately defined sense) locally Lipschitz continuous and pointwise-a.e.\ convex. 
The setting that we consider for our analysis is as follows.

\begin{assumption}[Standing assumptions for the analysis of \cref{sec:2}]%
\label{ass:functionalanalytic}%
Throughout this section, we assume the following:
\begin{enumerate}
\item\label{ass:functionalanalytic:i} 
$(\Omega, \Sigma, \mu)$ is a complete measure space with associated real
Lebesgue spaces $(L^p(\Omega), \|\cdot\|_{L^p(\Omega)})$, $p \in [1, \infty]$.
\item\label{ass:functionalanalytic:ii} 
 $(Y, \|\cdot\|_Y)$ is a real separable reflexive 
Banach space such that $Y \compactly L^q(\Omega)$ holds
 for a fixed
$q \in [1,\infty]$.
\item\label{ass:functionalanalytic:iii} 
$(X, \|\cdot\|_X)$ is a real separable Banach space.
\item\label{ass:functionalanalytic:iv}  
$(U, \|\cdot\|_U)$ is a real reflexive Banach space satisfying 
 $U \compactly X$.
\item\label{ass:functionalanalytic:v} 
The map $S\colon X \to Y$ satisfies 
\begin{equation}
\label{eq:convex}
S(\lambda x_1 + (1 - \lambda)x_2) \leq \lambda S(x_1) + (1-\lambda)S(x_2)\quad\mu\text{-a.e.\ in }\Omega
\end{equation}
for all $x_1, x_2 \in X$ and all $\lambda \in [0,1]$. Further, $S$ is locally 
Lipschitz continuous in the following sense:
There exists an exponent $r \in [1, \infty]$ such that, 
for all $x \in X$, there exist constants $C, \varepsilon >  0$ satisfying 
\begin{equation}
\label{eq:randomeq2625}
\norm*{ S(x_1) - S(x_2) }_Y \leq C  \|x_1 - x_2\|_X
\end{equation}
for all $x_1, x_2 \in X$ with $\|x_i - x\|_X \leq \varepsilon$, $i=1,2$, 
and 
\begin{equation}
\label{eq:randomeq2625-25}
\norm*{ S(x_1 + z) - S(x_1)
}_{L^r(\Omega)} \leq C \|z\|_U
\end{equation}
for all $x_1 \in X$ and all $z \in U$ with $\|x_1 - x\|_X \leq \varepsilon $ and $\|z\|_U \leq \varepsilon $.
\end{enumerate}
\end{assumption}

Note that condition \eqref{eq:randomeq2625-25} in \cref{ass:functionalanalytic}\ref{ass:functionalanalytic:v} 
is always satisfied for $r=q$ by the local Lipschitz continuity of $S$ as a function from $X$ to $Y$ 
in \eqref{eq:randomeq2625}
and the
continuity of the embeddings $Y \hookrightarrow L^q(\Omega)$ and $U \hookrightarrow X$. 
The modified stability estimate \eqref{eq:randomeq2625-25}
allows to establish the Newton differentiability of $S$ in stronger spaces if a better Lipschitz estimate 
for perturbations $z$ from the space $U$ is available for $S$, see \cref{th:mainabstractsemismooth} below and 
the tangible examples in 
\cref{sec:4}. If this  is not the case, then with the trivial choice $r = q$ 
\cref{ass:functionalanalytic}\ref{ass:functionalanalytic:v}  boils down to the condition 
that $S$ should be locally Lipschitz as a function from $X$ to $Y$ and pointwise-a.e.\ 
convex in the sense of \eqref{eq:convex}. 
Regarding the separability of $Y$ in \ref{ass:functionalanalytic:ii}, we would like to point out that 
this assumption can be made without any loss of generality. 
Indeed, if $Y$ is not separable,
then one can simply replace this space by the (necessarily separable) closure of the linear hull of the image $S(X)$ in $Y$
and thus resort to the separable situation.
Next, we recall
the definition of Gâteaux differentiability.

\begin{definition}[Gâteaux differentiability]%
\label{def:Gateaux}%
The function $S\colon X \to Y$ is called Gâteaux differentiable at a point  $x \in X$
if the directional derivative 
\[
S'(x;z) := \lim_{t \to 0^+} \frac{S(x + tz) - S(x)}{t} \in Y
\]
exists for all $z \in X$ and if the map $X \ni z \mapsto S'(x;z) \in Y$ is linear and continuous.
In this case, we call $S'(x):= S'(x;\cdot) \in \LL(X,Y)$ the Gâteaux derivative of $S$ at $x$.
\end{definition}

The properties of $S$, $X$, and $Y$
yield the next result.

\begin{theorem}
\label{th:infRademacher}
The set of points in $X$ at which the function $S\colon X \to Y$ possesses a Gâteaux derivative
is dense in $X$.
Henceforth, this set is denoted by $\DD_S$.
\end{theorem}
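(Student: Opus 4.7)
The plan is to derive \cref{th:infRademacher} from an infinite-dimensional Rademacher-type theorem. The key observation is that, under \cref{ass:functionalanalytic}, all the structural hypotheses are in place: by \cref{ass:functionalanalytic}\ref{ass:functionalanalytic:iii}, $X$ is a separable Banach space; by \cref{ass:functionalanalytic}\ref{ass:functionalanalytic:v}, the map $S\colon X \to Y$ is locally Lipschitz continuous in the sense of \eqref{eq:randomeq2625}; and by \cref{ass:functionalanalytic}\ref{ass:functionalanalytic:ii}, $Y$ is a separable reflexive Banach space and therefore enjoys the Radon--Nikodým property. These are precisely the hypotheses of Aronszajn's theorem (N.~Aronszajn, \emph{Differentiability of Lipschitzian mappings between Banach spaces}, Studia Math.~57 (1976); alternatively one may invoke the related result of Mankiewicz), which states that a Lipschitz continuous mapping from a separable Banach space into a Banach space with the Radon--Nikodým property is Gâteaux differentiable outside an Aronszajn null set, and in particular on a dense subset of its domain. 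Note that the pointwise-a.e.\ convexity property \eqref{eq:convex} plays no role at this stage of the argument.

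Since \eqref{eq:randomeq2625} only provides Lipschitz continuity locally, I would first localize the situation. Using the separability of $X$, one selects a countable family of open balls $\{B_n\}_{n \in \N}$ covering $X$ on each of which $S$ is globally Lipschitz continuous as a map into $Y$. Applying Aronszajn's theorem to each restriction $S|_{B_n}$ (or, equivalently, to a global Lipschitz extension obtained, for instance, by a componentwise McShane-type argument relative to a Schauder-type representation of $Y$) yields that $\DD_S \cap B_n$ is dense in $B_n$ for every $n \in \N$. Taking the union over $n$ and using $X = \bigcup_{n \in \N} B_n$ then gives the desired density of $\DD_S$ in $X$.

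The main obstacle here is conceptual rather than computational: one must identify the correct infinite-dimensional Rademacher-type theorem and check that $X$, $Y$, and $S$ indeed satisfy its assumptions. An alternative path would be to exploit the pointwise-a.e.\ convexity \eqref{eq:convex} together with the classical one-dimensional Rademacher theorem applied, for each fixed $z \in X$, to the scalar maps $t \mapsto S(x+tz)(\omega)$ in order to produce directional derivatives $\mu$-a.e.\ in $\Omega$, and then to assemble these into a genuine Gâteaux derivative in $Y$ using the separability and reflexivity of $Y$. While this approach sidesteps the deep geometric measure theory behind Aronszajn's result, it is markedly more laborious and yields no sharper information than what is required for \cref{th:infRademacher}. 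For this reason the direct appeal to Aronszajn is the natural choice.
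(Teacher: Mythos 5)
Your proposal is correct and follows essentially the same route as the paper: the paper's proof is a bare citation of Thibault (Proposition~1.3, Remark~1.3), whose result is itself an application of the Aronszajn/Mankiewicz infinite-dimensional Rademacher theorem for locally Lipschitz maps from a separable Banach space into a space with the Radon--Nikodým property, which is exactly what you invoke (correctly noting that separability and reflexivity of $Y$ supply the RNP and that the pointwise convexity is not used here). One small caveat: the parenthetical suggestion of a componentwise McShane-type extension for the $Y$-valued map is not reliable in general (McShane preserves Lipschitz constants only for scalar targets or injective spaces), but it is not needed since Aronszajn's theorem applies directly to locally Lipschitz maps on open subsets, so your primary localization-by-balls argument is the right one.
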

\begin{proof}
See \cite[Proposition~1.3, Remark~1.3]{Thibault1982} and the references therein. 
\end{proof}

To establish the Newton differentiability of $S$,
we use a generalized differential.

\begin{definition}[Strong-weak Bouligand differential]%
\label{def:Bsubdifferential}%
For all $x \in X$, we define the 
strong-weak Bouligand differential $\partial_B^{sw}S(x) \subset \LL(X,Y)$ by
\begin{equation*}
\begin{aligned}
\partial_B^{sw}S(x)
:=
\set[\big]{
G 
\given \exists \{x_n\} \subset \DD_S \colon x_n \to x \text{ in }X,~ S'(x_n) \WOT G \text{ in } \LL(X, Y)}.
\end{aligned}
\end{equation*}
\end{definition}

Note that the notation $\partial_B^{sw}S(x)$ emphasizes the modes of convergence appearing 
in the definition of the strong-weak Bouligand differential (strong convergence for the base points $x_n$ and 
WOT-convergence for the derivatives), cf.\ \cite[Definition~3.1]{Christof2018}.
Due to the separability of $X$ and the reflexivity of $Y$,
we have the following variant of the Banach--Alaoglu theorem.

\begin{theorem}%
\label{th:BanachAlaoglu}%
Every bounded sequence $\{G_n\} \subset \LL(X, Y)$ possesses a subsequence 
that converges w.r.t.\ the WOT in $\LL(X, Y)$ to an operator  $G \in \LL(X, Y)$.
\end{theorem}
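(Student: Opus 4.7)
The plan is to prove this via a standard diagonal subsequence argument combined with an $\varepsilon/3$-type extension, exploiting the separability of $X$ and the reflexivity of $Y$.

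First I would fix a countable dense subset $\{x_k\}_{k \in \N} \subset X$, which exists by \cref{ass:functionalanalytic}\ref{ass:functionalanalytic:iii}. Set $M := \sup_n \|G_n\|_{\LL(X,Y)} < \infty$. For each fixed $k$, the sequence $\{G_n x_k\}_{n \in \N} \subset Y$ is bounded by $M \|x_k\|_X$, so by the reflexivity of $Y$ and the Eberlein--Šmulian theorem it has a weakly convergent subsequence in $Y$. A standard diagonal extraction then yields a single subsequence, still denoted $\{G_{n_j}\}$, such that $G_{n_j} x_k \weakly y_k$ in $Y$ for every $k \in \N$, with some limit $y_k \in Y$.

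Next I would extend this pointwise weak convergence from the dense set $\{x_k\}$ to all of $X$. Given $x \in X$ and $\varphi \in Y^*$, the goal is to show that $\{\langle \varphi, G_{n_j} x\rangle_Y\}$ is Cauchy in $\R$. For any $\varepsilon > 0$, pick $x_k$ with $\|x - x_k\|_X \leq \varepsilon$; then
\[
\abs*{\dual{\varphi}{G_{n_j} x - G_{n_i} x}_Y} \leq 2 M \|\varphi\|_{Y^*}\,\varepsilon + \abs*{\dual{\varphi}{G_{n_j} x_k - G_{n_i} x_k}_Y},
\]
and the last term tends to zero as $i,j \to \infty$ by the choice of the diagonal subsequence. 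Hence $\{G_{n_j} x\}$ is weakly Cauchy in $Y$, and since $Y$ is reflexive it is weakly convergent to some element $G x \in Y$.

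Finally I would verify that $x \mapsto G x$ defines an element of $\LL(X,Y)$. Linearity follows from the linearity of the weak limit in $Y$. For continuity, the weak lower semicontinuity of the norm gives
\[
\|G x\|_Y \leq \liminf_{j \to \infty} \|G_{n_j} x\|_Y \leq M \|x\|_X,
\]
so $G \in \LL(X,Y)$ and, by construction, $G_{n_j} \WOT G$ in $\LL(X,Y)$. The main technical point is the $\varepsilon/3$ extension from the countable dense set to all of $X$; once uniform boundedness supplies the equicontinuity needed for that step, everything else is routine.
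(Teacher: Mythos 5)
Your proof is correct and follows essentially the same route as the paper's: separability of $X$ supplies a countable dense set, reflexivity of $Y$ gives weak sequential compactness, a diagonal argument produces the subsequence, and the limit operator is built up from the dense set using uniform boundedness. The only minor variation is in the extension step, where you argue that $\{G_{n_j}x\}$ is weakly Cauchy and invoke weak sequential completeness of the reflexive space $Y$, whereas the paper instead shows that the limits $g_{k_m}$ along an approximating sequence $x_{k_m}\to x$ are norm-Cauchy and defines $Gx$ as their strong limit; both are standard and correct.
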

\begin{proof}
	We set $C := \sup_{n \in \N} \norm{G_n}_{\LL(X,Y)} < \infty$.
	Let $\{x_k\}_{k = 1}^\infty \subset X$ be dense.
	Since $Y$ is reflexive, the sequence $\{ G_n x_k\}_{n=1}^\infty$
	possesses a weak accumulation point for every $k \in \N$.
	By a standard diagonal argument, we can pick a subsequence
	$\{\hat G_n\}$ of $\{G_n\}$ such that
	$\hat G_n x_k \weakly g_k$ holds in $Y$ for $n \to \infty$ for all $k \in \N$ with some $g_k \in Y$.
	For an arbitrary $x \in X$, there further exists a sequence $\{x_{k_m}\}$
	with $x_{k_m} \to x$.
	From
	\begin{equation*}
		\norm{
			g_{k_m}
			-
			g_{k_l}
		}_Y
		\le
		\liminf_{n \to \infty}
		\norm{
			\hat G_n x_{k_m}
			-
			\hat G_n x_{k_l}
		}_Y
		\le
		C \norm{ x_{k_m} - x_{k_l} }_X,
	\end{equation*}
	we obtain that $\{g_{k_m}\}$ is Cauchy and thus convergent.
	It is easy to check that the limit only depends on $x$
	(and not on $\{x_{k_m}\}$). This allows us to define $G x := \lim_{m \to \infty} g_{k_m}$.
	The linearity of $x \mapsto G x$ is evident
	and the boundedness of $G$ follows from
	\begin{equation*}
		\norm{G x}_Y
		=
		\lim_{m \to \infty} \norm{g_{k_m}}_Y
		\le
		\limsup_{m \to \infty} \liminf_{n \to \infty} \norm{\hat G_n x_{k_m}}_Y
		\le
		C \norm{x}_X
		\qquad\forall x \in X.
	\end{equation*}
	It remains to show that $\hat G_n \WOT G$ holds.
	For arbitrary $x \in X$ and $y^* \in Y^*$, we have
	\begin{equation*}
		\abs{\dual{y^*}{(\hat G_n - G) x}_Y}
		\le
		\abs{\dual{y^*}{(\hat G_n - G) x_k}_Y}
		+
		2 C \norm{y^*}_{Y^*} \norm{x - x_k}_X\qquad \forall k \in \N.
	\end{equation*}
	Since $\{x_k\} \subset X$ is dense in $X$
	and since $\hat G_n x_k \weakly g_k = G x_k$ holds for $n \to \infty$ for all fixed $k$,
	this implies
	$\hat G_n \WOT G$ as claimed.
\end{proof}

As a consequence,
we obtain the following result (see \cite[Proposition~2.1]{Thibault1982}).

\begin{corollary}%
\label{cor:Bouligandnonempty}%
The generalized differential $\partial_B^{sw}S(x)$ is nonempty for all $x \in X$.
\end{corollary}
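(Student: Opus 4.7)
The proof is a direct combination of the preceding ingredients, and the plan can be described in three short stages.

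First, I would invoke \cref{th:infRademacher} to select a sequence $\{x_n\} \subset \DD_S$ with $x_n \to x$ in $X$, which is possible since $\DD_S$ is dense in $X$. After discarding finitely many terms, I may assume that all $x_n$ lie in the $\varepsilon$-ball around $x$ appearing in the local Lipschitz estimate \eqref{eq:randomeq2625} of \cref{ass:functionalanalytic}\ref{ass:functionalanalytic:v}, with associated constant $C$.

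Second, I would show that the sequence $\{S'(x_n)\}$ is bounded in $\LL(X,Y)$. For any $z \in X$ with $\|z\|_X$ small enough and any small $t > 0$, the points $x_n$ and $x_n + tz$ both lie in the $\varepsilon$-ball about $x$, so \eqref{eq:randomeq2625} yields
\begin{equation*}
\norm[\big]{ \tfrac{1}{t}(S(x_n + t z) - S(x_n)) }_{Y} \leq C \|z\|_X.
\end{equation*}
Letting $t \to 0^+$ and using the definition of $S'(x_n; z)$ together with the lower semicontinuity of $\|\cdot\|_Y$ under weak limits (applied to a strongly convergent difference quotient, which is automatic here), I obtain $\|S'(x_n) z\|_Y \leq C\|z\|_X$ first for small $z$ and then, by linearity, for all $z \in X$. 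Hence $\|S'(x_n)\|_{\LL(X,Y)} \leq C$ for all $n$ large enough.

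Third, I would apply \cref{th:BanachAlaoglu} to extract a subsequence, still denoted by $\{x_n\}$, such that $S'(x_n) \WOT G$ for some $G \in \LL(X, Y)$. By \cref{def:Bsubdifferential}, this operator $G$ lies in $\partial_B^{sw} S(x)$, proving nonemptiness.

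The argument is essentially routine; the only point deserving care is the second stage, where one must upgrade the local Lipschitz estimate on $S$ to a uniform operator-norm bound on the Gâteaux derivatives $S'(x_n)$ for $x_n$ near $x$. Once this is in hand, the \cref{th:BanachAlaoglu} is precisely the sequential compactness statement needed to close the argument.
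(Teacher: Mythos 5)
Your proof is correct and follows essentially the same route as the paper: extract a sequence of Gâteaux points via \cref{th:infRademacher}, observe that the local Lipschitz estimate \eqref{eq:randomeq2625} forces uniform boundedness of $\{S'(x_n)\}$ in $\LL(X,Y)$, and then apply \cref{th:BanachAlaoglu} to obtain a WOT-convergent subsequence whose limit belongs to $\partial_B^{sw}S(x)$ by definition. The paper states the boundedness of $\{S'(x_n)\}$ as an immediate consequence of local Lipschitz continuity without spelling it out; your second stage merely makes this explicit (and you correctly note that the weak-lower-semicontinuity appeal there is unnecessary since the difference quotient converges strongly in $Y$).
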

\begin{proof}
Given $x \in X$, we can find a sequence $\{x_n\} \subset \DD_S$ with $x_n \to x$ by \cref{th:infRademacher}.
Due to the local Lipschitz continuity of $S\colon X \to Y$, the 
sequence of Gâteaux derivatives $S'(x_n)$ is bounded in $\LL(X, Y)$. 
There thus exists a subsequence of $\{S'(x_n)\}$
(still denoted the same) such that $S'(x_n) \WOT G$ holds
in $\LL(X, Y)$ for some $G \in \LL(X, Y)$. 
By \cref{def:Bsubdifferential},
this $G$ satisfies $G \in \partial_B^{sw}S(x)$.
\end{proof}

Using standard techniques, we can also prove the following 
upper semicontinuity result for the set function 
$\partial_B^{sw} S\colon X \rightrightarrows \LL(X,Y)$.

\begin{lemma}
\label{lemma:uppersemi}
Let $\{x_n\} \subset X$ and $\{G_n\} \subset \LL(X, Y)$ 
be sequences satisfying 
$x_n\to x$ in $X$ for some $x \in X$, 
$G_n \in \partial_B^{sw} S(x_n)$ for all $n \in \mathbb{N}$,
and $G_n \to G$ w.r.t.\ the WOT in $\LL(X, Y)$ for some 
$G \in \LL(X, Y)$. Then it holds $G \in \partial_B^{sw} S(x)$. 
\end{lemma}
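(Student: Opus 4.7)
The plan is to produce, by a diagonal construction, a single sequence $\{\tilde x_n\} \subset \DD_S$ with $\tilde x_n \to x$ in $X$ and $S'(\tilde x_n) \WOT G$ in $\LL(X,Y)$, which is exactly what \cref{def:Bsubdifferential} requires for membership $G \in \partial_B^{sw}S(x)$. Since each $G_n \in \partial_B^{sw}S(x_n)$, \cref{def:Bsubdifferential} supplies, for every $n$, a sequence $\{x_{n,k}\}_{k \in \N} \subset \DD_S$ with $x_{n,k} \to x_n$ in $X$ and $S'(x_{n,k}) \WOT G_n$ in $\LL(X,Y)$ as $k \to \infty$. The first thing I would note is that, by the local Lipschitz estimate \eqref{eq:randomeq2625} applied around $x$, the operator norms $\|S'(x_{n,k})\|_{\LL(X,Y)}$ are uniformly bounded by the local Lipschitz constant of $S$ at $x$, at least for $n$ and $k$ sufficiently large; the same argument also gives $\|G_n\|_{\LL(X,Y)} \le C$ and $\|G\|_{\LL(X,Y)} \le C$ by passing to the limit in the WOT.

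Next I would set up countable test families. Since $X$ is separable by \cref{ass:functionalanalytic}\ref{ass:functionalanalytic:iii}, pick a dense sequence $\{z_m\} \subset X$. Since $Y$ is separable and reflexive by \cref{ass:functionalanalytic}\ref{ass:functionalanalytic:ii}, $Y^*$ is separable as well, so pick a dense sequence $\{y_\ell^*\} \subset Y^*$. The definition of WOT-convergence together with the uniform bound on $\{S'(x_{n,k})\}_k$, $\{G_n\}$, and $G$ means that to verify WOT-convergence of a uniformly bounded sequence in $\LL(X,Y)$ it suffices to test against pairs $(z_m, y_\ell^*)$.

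Now comes the diagonal extraction, which is the technical core of the argument. For each $n \in \N$, I would use $x_{n,k} \to x_n$ and $S'(x_{n,k}) \WOT G_n$ to choose an index $k(n)$ such that
\begin{equation*}
\|x_{n,k(n)} - x_n\|_X \le \tfrac{1}{n}
\qquad\text{and}\qquad
\bigl|\dual*{y_\ell^*}{\bigl(S'(x_{n,k(n)}) - G_n\bigr) z_m}_Y\bigr| \le \tfrac{1}{n}
\quad\text{for all }m, \ell \le n.
\end{equation*}
Setting $\tilde x_n := x_{n,k(n)} \in \DD_S$, the triangle inequality $\|\tilde x_n - x\|_X \le 1/n + \|x_n - x\|_X$ yields $\tilde x_n \to x$ in $X$. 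For any fixed pair $(z_m, y_\ell^*)$, as soon as $n \ge \max(m,\ell)$, the triangle inequality combined with $G_n \WOT G$ gives
\begin{equation*}
\bigl|\dual*{y_\ell^*}{\bigl(S'(\tilde x_n) - G\bigr) z_m}_Y\bigr|
\le
\tfrac{1}{n}
+
\bigl|\dual*{y_\ell^*}{(G_n - G) z_m}_Y\bigr|
\to 0.
\end{equation*}

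The last step is to upgrade this convergence on the countable test family to full WOT-convergence in $\LL(X,Y)$. Given arbitrary $z \in X$ and $y^* \in Y^*$, I would use density of $\{z_m\}$ and $\{y_\ell^*\}$ together with the uniform bound $\|S'(\tilde x_n)\|_{\LL(X,Y)} \le C$ and $\|G\|_{\LL(X,Y)} \le C$ to estimate $|\dual*{y^*}{(S'(\tilde x_n) - G)z}_Y|$ by $|\dual*{y_\ell^*}{(S'(\tilde x_n) - G)z_m}_Y|$ plus two error terms of the form $2C\|y^*\|_{Y^*}\|z - z_m\|_X$ and $2C\|y^* - y_\ell^*\|_{Y^*}\|z\|_X$, which can be made arbitrarily small by first choosing $m, \ell$ large and then sending $n \to \infty$. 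This yields $S'(\tilde x_n) \WOT G$, and hence $G \in \partial_B^{sw}S(x)$. The only delicate point is making sure the diagonal selection is compatible with all test pairs $(z_m, y_\ell^*)$ simultaneously; this is precisely why the requirement $m, \ell \le n$ is built into the choice of $k(n)$.
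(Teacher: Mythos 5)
Your proof is correct. The paper itself does not spell out the argument but simply refers to \cite[Proposition~2.11(iii)]{Rauls2020} and \cite[Proposition~3.6]{Christof2018}, noting only that local Lipschitz continuity suffices in place of the global Lipschitz assumption used in \cite{Rauls2020}; your diagonal extraction over countable dense subsets of $X$ and $Y^*$, combined with the uniform operator-norm bound obtained from the local Lipschitz estimate \eqref{eq:randomeq2625} near $x$, is precisely the standard technique those references employ, so you have essentially reconstructed the cited argument rather than found a different route.
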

\begin{proof}
The proof of this result is completely analogous to that of \cite[Proposition~2.11(iii)]{Rauls2020},
see also \cite[Proposition~3.6]{Christof2018}.
Note that in \cite{Rauls2020},
$S$
is assumed to be 
globally Lipschitz continuous,
but
local Lipschitz continuity suffices for the proof.
\end{proof}

We prepare
our Newton differentiability result for $S$
with three lemmas.

\begin{lemma}
\label{lemma:convex1}
For all $x \in X$ and all $G \in \partial_B^{sw}S(x)$, we have 
\[
S( x + z) - S(x) \geq Gz~\mu\text{-a.e.\ in }\Omega \qquad \forall z \in X.
\]
\end{lemma}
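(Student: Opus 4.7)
The plan is to exploit the pointwise-a.e.\ convexity of $S$ in the classical subgradient fashion: for a scalar convex function, $f(x+z) - f(x) \geq f'(x)z$ holds at points of differentiability, and I would first establish a pointwise-a.e.\ analogue of this estimate at base points $\tilde x \in \DD_S$, and then transfer the inequality to $x$ and $G$ via the limiting procedure built into \cref{def:Bsubdifferential}.

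For the first step, fix $\tilde x \in \DD_S$ and $z \in X$. The pointwise-a.e.\ convexity \eqref{eq:convex}, applied with $x_1 = \tilde x + z$, $x_2 = \tilde x$, and $\lambda \in (0,1]$, yields, after subtracting $S(\tilde x)$ and dividing by $\lambda$,
\[
\frac{S(\tilde x + \lambda z) - S(\tilde x)}{\lambda} \leq S(\tilde x + z) - S(\tilde x) \quad \mu\text{-a.e.\ in }\Omega.
\]
Sending $\lambda \to 0^+$, the left-hand side converges to $S'(\tilde x) z$ in $Y$ by Gâteaux differentiability, hence in $L^q(\Omega)$ via the embedding $Y \hookrightarrow L^q(\Omega)$, and therefore $\mu$-a.e.\ in $\Omega$ along a suitable subsequence. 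This gives $S'(\tilde x) z \leq S(\tilde x + z) - S(\tilde x)$ $\mu$-a.e.\ in $\Omega$ for every $\tilde x \in \DD_S$ and every $z \in X$.

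For the second step, choose a sequence $\{x_n\} \subset \DD_S$ with $x_n \to x$ in $X$ and $S'(x_n) \WOT G$ in $\LL(X,Y)$, which exists by \cref{def:Bsubdifferential}. The local Lipschitz estimate \eqref{eq:randomeq2625} forces $S(x_n + z) - S(x_n) \to S(x + z) - S(x)$ strongly in $Y$, while the definition of WOT-convergence yields $S'(x_n) z \weakly G z$ in $Y$. Consequently the residuals
\[
a_n := S(x_n + z) - S(x_n) - S'(x_n) z,
\]
which all satisfy $a_n \geq 0$ $\mu$-a.e.\ in $\Omega$ by Step~1, converge weakly in $Y$ to $a := S(x + z) - S(x) - G z$. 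The main obstacle is to transfer the $\mu$-a.e.\ nonnegativity of the $a_n$ to the weak limit $a$ in the absence of strong convergence for the subgradient part. I would handle this by Mazur's lemma: suitable convex combinations of $\{a_n\}$ converge strongly in $Y$ to $a$, hence strongly in $L^q(\Omega)$, and therefore $\mu$-a.e.\ in $\Omega$ along a further subsequence. Since every such convex combination is itself nonnegative $\mu$-a.e.\ in $\Omega$, the inequality $a \geq 0$ $\mu$-a.e.\ in $\Omega$ follows, which is precisely the asserted estimate.
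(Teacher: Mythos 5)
Your proof is correct and takes essentially the same route as the paper: establish the pointwise subgradient inequality at Gâteaux points from the $\mu$-a.e.\ convexity, then pass to the limit along the approximating sequence from \cref{def:Bsubdifferential}. The Mazur-lemma step you use to handle the weakly convergent term $S'(x_n)z$ is one valid way to fill in the paper's terse ``pass to the limit'' phrase; equivalently, one could observe that weak convergence in $Y$ together with $Y \compactly L^q(\Omega)$ gives strong $L^q$-convergence of $S'(x_n)z$, or simply that the nonnegative cone in $L^q(\Omega)$ is closed and convex, hence weakly closed.
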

\begin{proof}
Let $\{x_n\} \subset \DD_S$ be an approximating sequence for $G \in \partial_B^{sw}S(x)$
as in the definition of the generalized differential $\partial_B^{sw}S(x)$. Then, for each $n$, we obtain from the pointwise-a.e.\
convexity of $S$, the Gâteaux differentiability of $S\colon X \to Y$ in $x_n$,
and the embedding $Y \hookrightarrow L^q(\Omega)$ that 
\[
S( x_n + z) - S(x_n)
\geq \lim_{\mathbb{N} \ni k \to \infty} \frac{S( x_n + (1/k) z) - S(x_n) }{1/k} =
 S'(x_n)z~\mu\text{-a.e.\ in }\Omega \quad \forall z \in X.
\]
Passing to the limit $n \to \infty$ in this inequality by using the local Lipschitz continuity of $S$
and again the embedding  $Y \hookrightarrow L^q(\Omega)$ yields the claim.
\end{proof}

\begin{lemma}
	\label{lem:closed_balls}
	For every $R \ge 0$, the set
	\(
		\set[\big]{
			w \in L^q(\Omega) \cap L^r(\Omega)
			\given
			\norm{w}_{L^r(\Omega)} \le R
		}
	\)
	(with the exponents $q$ and $r$ from \cref{ass:functionalanalytic})
	is sequentially weakly closed in $L^q(\Omega)$.
\end{lemma}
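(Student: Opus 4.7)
The plan is to use Mazur's lemma to convert the weak convergence in $L^q(\Omega)$ into strong convergence (and then pointwise convergence along a subsequence), and then to pass the $L^r$-bound to the limit via convexity of the norm together with Fatou's lemma.

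More precisely, suppose $\{w_n\}$ lies in the set in question and satisfies $w_n \weakly w$ in $L^q(\Omega)$ for some $w \in L^q(\Omega)$. First I would apply Mazur's lemma to obtain a sequence $\{v_n\}$ of convex combinations of the tail $\{w_k : k \ge n\}$ such that $v_n \to w$ strongly in $L^q(\Omega)$. Each $v_n$ lies in $L^q(\Omega) \cap L^r(\Omega)$, and by the convexity and absolute homogeneity of $\|\cdot\|_{L^r(\Omega)}$ we have $\|v_n\|_{L^r(\Omega)} \le R$ for every $n$.

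Next, I would extract a subsequence of $\{v_n\}$ (not relabeled) that converges to $w$ pointwise $\mu$-a.e.\ in $\Omega$. For $q < \infty$ this is the standard consequence of strong $L^q$-convergence; for $q = \infty$, strong $L^\infty$-convergence itself already implies $\mu$-a.e.\ convergence (after modification on a null set). Once pointwise a.e.\ convergence is in hand, the desired bound on $\|w\|_{L^r(\Omega)}$ follows by a straightforward case distinction: if $r < \infty$, Fatou's lemma yields
\begin{equation*}
\int_\Omega |w|^r \,\d\mu \le \liminf_{n\to\infty}\int_\Omega |v_n|^r \,\d\mu \le R^r,
\end{equation*}
so that $w \in L^r(\Omega)$ with $\|w\|_{L^r(\Omega)} \le R$; if $r = \infty$, the bounds $|v_n| \le R$ $\mu$-a.e.\ pass directly to the pointwise limit, giving $|w| \le R$ $\mu$-a.e.\ and therefore $\|w\|_{L^\infty(\Omega)} \le R$. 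In either case, $w$ lies in the set.

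I do not expect a serious obstacle here; the only real points requiring attention are the possibly non-reflexive or non-separable endpoint cases ($q \in \{1,\infty\}$ or $r \in \{1,\infty\}$), which is precisely why Mazur's lemma plus Fatou is preferable to invoking a Banach--Alaoglu-type argument inside $L^r(\Omega)$: Mazur's lemma holds in any Banach space, and Fatou's lemma makes no reflexivity assumption on $L^r(\Omega)$.
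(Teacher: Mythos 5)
Your proof is correct and is essentially an unfolding of the paper's one-line argument: the paper observes that $w \mapsto \norm{w}_{L^r(\Omega)}$, viewed as a convex and strongly lower semicontinuous $[0,\infty]$-valued functional on $L^q(\Omega)$, is automatically weakly lower semicontinuous, so its sublevel sets are weakly closed. Your Mazur-plus-Fatou (respectively, pointwise bound for $r=\infty$) argument is precisely how one proves the two facts the paper cites --- weak lower semicontinuity of a convex lsc functional, and strong lower semicontinuity of $\norm{\cdot}_{L^r(\Omega)}$ on $L^q(\Omega)$ --- so the two proofs coincide modulo invoking versus re-deriving the standard machinery.
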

\begin{proof}
	One can check that
	$L^q(\Omega) \ni w \mapsto \norm{w}_{L^r(\Omega)} \in [0, \infty]$
	is convex and lower semicontinuous.
	Thus it is weakly lower semicontinuous and the claim follows.
\end{proof}

\begin{lemma}
\label{lemma:bounded1}
For every $x \in X$, there exist constants $C, \delta > 0$ such that
\[
\sup_{v \in X, \|v - x\|_X \leq \delta} \,\sup_{G \in \partial_B^{sw}S(v)} \|G z\|_{L^r(\Omega)} \leq C \|z\|_U
\qquad\forall z \in U.
\]
Here, $r \in [1,\infty]$ denotes the exponent from \cref{ass:functionalanalytic}\ref{ass:functionalanalytic:v}.
\end{lemma}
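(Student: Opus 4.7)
The plan is to sandwich $Gz$ pointwise between the increments $S(v+z)-S(v)$ and $-(S(v-z)-S(v))$ using \cref{lemma:convex1}, and then to apply the refined Lipschitz estimate \eqref{eq:randomeq2625-25} to each of these increments. Let $\varepsilon, C > 0$ be the constants provided by \cref{ass:functionalanalytic}\ref{ass:functionalanalytic:v} at the point $x$, and set $\delta := \varepsilon$. Fix an arbitrary $v\in X$ with $\|v-x\|_X \le \delta$ and any $G \in \partial_B^{sw}S(v)$.

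First consider $z \in U$ satisfying $\|z\|_U \le \varepsilon$. Applying \cref{lemma:convex1} at the base point $v$ with directions $z$ and $-z$ (which is legitimate since $U \subset X$ and $G$ is linear) yields
\[
-\bigl(S(v-z)-S(v)\bigr) \;\le\; Gz \;\le\; S(v+z)-S(v)\qquad \mu\text{-a.e.\ in }\Omega,
\]
and hence the absolute estimate
\[
|Gz| \;\le\; |S(v+z)-S(v)| + |S(v-z)-S(v)| \qquad \mu\text{-a.e.\ in }\Omega.
\]
Because $\|v-x\|_X \le \varepsilon$ and both $\|\pm z\|_U \le \varepsilon$, the assumption \eqref{eq:randomeq2625-25} shows that each term on the right-hand side lies in $L^r(\Omega)$ with norm at most $C\|z\|_U$. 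This proves $Gz \in L^r(\Omega)$ together with $\|Gz\|_{L^r(\Omega)} \le 2C\|z\|_U$.

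It remains to remove the size restriction on $z$. For arbitrary $z \in U\setminus\{0\}$, one applies the bound just established to $\tilde z := (\varepsilon/\|z\|_U)\,z \in U$, which satisfies $\|\tilde z\|_U = \varepsilon$, and then uses the identity $G\tilde z = (\varepsilon/\|z\|_U)\,Gz$ to scale back. This yields the claim with constant $2C$, uniformly over the prescribed set of $v$ and $G$. The only delicate point to watch is that a priori $Gz$ is only known to belong to $Y \hookrightarrow L^q(\Omega)$, so its $L^r$-integrability (which could genuinely fail when $r>q$) must be extracted from the two-sided pointwise bound above; it is precisely here that the improved estimate \eqref{eq:randomeq2625-25} from \cref{ass:functionalanalytic}\ref{ass:functionalanalytic:v} is indispensable, as the weaker \eqref{eq:randomeq2625} would only control the increments in $Y$.
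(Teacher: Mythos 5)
Your proof is correct, and it takes a genuinely different (and arguably cleaner) route than the paper's. The paper's argument re-descends to the approximating sequence $\{v_n\}$ of G\^ateaux points for $G$: it bounds the difference quotients $(S(v_n+tz)-S(v_n))/t$ in $L^r(\Omega)$ via \eqref{eq:randomeq2625-25}, passes to the limit $t\to 0^+$ using the weak lower semicontinuity of the $L^r$-norm on $L^q(\Omega)$ (\cref{lem:closed_balls}), and then passes $n\to\infty$ by the same device after invoking $S'(v_n)\WOT G$. This requires $\delta=\varepsilon/2$ to ensure the approximating points eventually stay in the $\varepsilon$-ball and yields the sharper constant $C$. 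You instead apply \cref{lemma:convex1} directly at the base point $v$ with directions $\pm z$, which delivers the two-sided pointwise bound $-(S(v-z)-S(v)) \le Gz \le S(v+z)-S(v)$ $\mu$-a.e.; domination by an $L^r$-function then supplies both the $L^r$-membership of $Gz$ and the norm estimate, without any recourse to \cref{lem:closed_balls} or the G\^ateaux sequence (since all of that is already baked into \cref{lemma:convex1}). This allows $\delta=\varepsilon$ but gives the slightly weaker constant $2C$, which of course is immaterial for the statement. Both the scaling step for $\|z\|_U>\varepsilon$ and the observation that dominated measurable functions inherit $L^r$-integrability are handled correctly.
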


\begin{proof}
Suppose that $x \in X$ is given and let 
$C, \varepsilon > 0$ be the constants from 
\cref{ass:functionalanalytic}\ref{ass:functionalanalytic:v} for this $x$.
We set $\delta := \varepsilon/2$.
Let
$v \in X$ with $\|v - x\|_X \leq \delta$
and $G \in \partial_B^{sw}S(v)$ be arbitrary,
and let $\{v_n\} \subset X$ be an approximating sequence of Gâteaux points 
for $G$ as in \cref{def:Bsubdifferential}. We assume
w.l.o.g.\ that $\|v_n - x\|_X \leq \varepsilon$ holds for all $n \in \N$.
Due to \eqref{eq:randomeq2625-25}, this yields
\begin{equation}
\label{eq:randomeq2763263}
\norm*{ \frac{S(v_n + t z) - S(v_n)}{t}
}_{L^r(\Omega)} \leq C\|z\|_U
\qquad
\forall n \in \N, t \in \parens*{0, \frac {\varepsilon}{\norm{z}_U}}, z \in U,
\end{equation}
where the fraction $\varepsilon/\norm{z}_U$ is to be understood as $\infty$ in the case $z=0$.
Because of the Gâteaux differentiability of $S\colon X \to Y$ at $v_n$
and since $Y \hookrightarrow L^q(\Omega)$,
we can use \cref{lem:closed_balls}
to pass to the limit $t \to 0^+$ in \eqref{eq:randomeq2763263}
and obtain
\[
\norm*{ S'(v_n)z 
}_{L^r(\Omega)} \leq C \|z\|_U\qquad \forall n \in \N, z \in U.
\]
By $S'(v_n) \WOT G$ in $\LL(X, Y)$
and $Y \hookrightarrow L^q(\Omega)$,
we have $S'(v_n)z  \weakly Gz$ in $L^q(\Omega)$. 
Using \cref{lem:closed_balls} again
gives
$
\norm*{ G z
}_{L^r(\Omega)} \leq C \|z\|_U
$
for all $z \in U$
and the claim follows.
\end{proof}

With \cref{lemma:convex1,lemma:bounded1} at hand, we are 
in the position to prove our first main result.
Before we do so, we clarify what we mean with the term ``Newton differentiable''
in the situation of the nested Banach space structure in \cref{ass:functionalanalytic}.

\begin{definition}[Newton differentiability]%
\label{def:NewtDif}%
Suppose that $\GG\colon X \rightrightarrows \LL(X,Y)$ is a set-valued map
and that $p \in [1, \infty]$ is an exponent. 
We say that $S\colon X \to Y$ with $\GG$ is Newton differentiable 
(with Newton derivative $\GG$)
w.r.t.\ perturbations in $U$
with values in $L^p(\Omega)$
if every $x \in X$ satisfies
\[
\sup_{G \in \GG(x + z)} \frac{\|S(x + z) - S(x) - Gz\|_{L^p(\Omega)}}{\|z\|_{U}} \to 0
\quad\text{for } \|z\|_{U} \to 0. 
\]
\end{definition}

Note that \cref{def:NewtDif} allows to distinguish between 
different regularities of the points $x$ and the perturbations $z$, cf.\ \cite[Definition~3.1]{Ulbrich2011}.

\begin{theorem}[Newton differentiability of $S$]%
\label{th:mainabstractsemismooth}%
Let $r,q \in [1, \infty]$ be the exponents from \cref{ass:functionalanalytic}
and let $p \in \{q\} \cup (\min(q,r), \max(q,r))$ be given.
Then the function $S\colon X \to Y$ with the differential $\partial_B^{sw} S\colon X \rightrightarrows \LL(X,Y)$
is Newton differentiable w.r.t.\ perturbations in $U$
with values in $L^p(\Omega)$.
\end{theorem}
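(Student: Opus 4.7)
The plan is to argue by contradiction. Suppose the claim fails at some $x \in X$, i.e., there exist $\varepsilon > 0$, a sequence $\{z_n\} \subset U \setminus \{0\}$ with $t_n := \|z_n\|_U \to 0$, and operators $G_n \in \partial_B^{sw}S(x+z_n)$ such that $\|w_n\|_{L^p(\Omega)} \geq \varepsilon t_n$ holds for $w_n := S(x+z_n) - S(x) - G_n z_n$. Setting $h_n := z_n/t_n$ and $\tilde w_n := w_n/t_n$, I have to show that along a suitable subsequence $\|\tilde w_n\|_{L^p(\Omega)} \to 0$. Local Lipschitz continuity of $S\colon X \to Y$ together with \cref{eq:randomeq2625} entails $\sup_n \|G_n\|_{\LL(X,Y)} < \infty$, so \cref{th:BanachAlaoglu} yields a subsequence (not relabeled) with $G_n \WOT G$ in $\LL(X,Y)$, and \cref{lemma:uppersemi} applied to $x + z_n \to x$ gives $G \in \partial_B^{sw} S(x)$. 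By reflexivity of $U$ and $U \compactly X$, I may further extract so that $h_n \weakly h$ in $U$ and $h_n \to h$ in $X$.

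The central idea is a pointwise sandwich. Applying \cref{lemma:convex1} at the base point $x+z_n$ with direction $-z_n$ gives $S(x) - S(x+z_n) \geq -G_n z_n$, so $\tilde w_n \leq 0$ $\mu$-a.e. Applying \cref{lemma:convex1} at $x$ with the operator $G \in \partial_B^{sw}S(x)$ and direction $z_n$ gives $S(x+z_n) - S(x) \geq G z_n$, which upon dividing by $t_n$ becomes $(G - G_n) h_n \leq \tilde w_n$ $\mu$-a.e. Combining these two bounds produces the crucial domination
\begin{equation*}
0 \le |\tilde w_n| \le (G_n - G) h_n \quad \mu\text{-a.e.\ in } \Omega.
\end{equation*}

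The main obstacle is then to show $(G_n - G) h_n \to 0$ strongly in $L^q(\Omega)$; this is the step that forces the nested topological setup of \cref{ass:functionalanalytic}. I decompose
\begin{equation*}
(G_n - G) h_n = G_n(h_n - h) + (G_n - G)h - G(h_n - h).
\end{equation*}
The first and third terms tend to $0$ strongly in $Y$ (hence in $L^q(\Omega)$) by the uniform bound on $\{G_n\}$, continuity of $G$, and $h_n \to h$ in $X$. For the middle term, the WOT-convergence $G_n \WOT G$ yields $(G_n - G) h \weakly 0$ in $Y$, and the compact embedding $Y \compactly L^q(\Omega)$ upgrades this weak convergence to strong convergence in $L^q(\Omega)$. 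Thus $(G_n - G) h_n \to 0$ in $L^q(\Omega)$.

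To conclude, for $p = q$ the domination inequality gives $\|\tilde w_n\|_{L^q(\Omega)} \le \|(G_n - G) h_n\|_{L^q(\Omega)} \to 0$ at once, contradicting $\|\tilde w_n\|_{L^p(\Omega)} \ge \varepsilon$. For $p \in (\min(q,r), \max(q,r))$, I invoke \cref{lemma:bounded1} (applicable once $n$ is large enough so that $\|z_n\|_X \le \delta$) to bound $\|G_n h_n\|_{L^r(\Omega)}$ and $\|G h_n\|_{L^r(\Omega)}$ by a constant, giving $\|(G_n - G) h_n\|_{L^r(\Omega)} \le 2C$. Standard Lebesgue interpolation
\begin{equation*}
\|(G_n - G) h_n\|_{L^p(\Omega)} \le \|(G_n - G) h_n\|_{L^q(\Omega)}^{\theta} \,\|(G_n - G) h_n\|_{L^r(\Omega)}^{1-\theta}
\end{equation*}
with $\theta \in (0,1)$ determined by $1/p = \theta/q + (1-\theta)/r$ then forces $\|\tilde w_n\|_{L^p(\Omega)} \to 0$, again contradicting the lower bound and completing the proof.
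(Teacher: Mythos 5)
Your proof is correct and follows essentially the same route as the paper's: the same subsequence extraction via \cref{th:BanachAlaoglu} and \cref{lemma:uppersemi}, the same pointwise sandwich from \cref{lemma:convex1}, the same passage to strong $L^q$-convergence of $(G_n-G)h_n$ via the compact embedding $Y \compactly L^q(\Omega)$, the same uniform $L^r$-bound from \cref{lemma:bounded1}, and the same Lebesgue interpolation. The only cosmetic difference is that you argue by contradiction rather than passing to a subsequence realizing the limit superior, and you split $(G_n-G)h_n$ into three terms rather than the paper's equivalent two-term decomposition $(G-G_n)e + (G-G_n)(e-e_n)$.
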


\begin{proof}
	Let $x \in X$ be fixed.
	Due to \cref{cor:Bouligandnonempty}, 
	it suffices to show that, for all $\{z_n\} \subset U \setminus \set{0}$,
	$\{G_n\} \subset \LL(X,Y)$
	with $\|z_n\|_{U} \to 0$,
	$G_n \in \partial_B^{sw}S(x + z_n)$,
	we have
	\begin{equation}
		\label{eq:randomeq2635}
		\limsup_{n \to \infty} \frac{\norm{S(x + z_n) - S(x) - G_n z_n}_{L^p(\Omega)}}{\norm{z_n}_{U}}
		=
		0
		.
	\end{equation}
	Let such sequences $\{z_n\}$ and $\{G_n\}$ be given.
	To prove \eqref{eq:randomeq2635}, we pass over to subsequences of $\{z_n\}$ and $\{G_n\}$
	(still denoted the same) along which the limit superior in 
	\eqref{eq:randomeq2635} is attained as a limit. 
	Since \eqref{eq:randomeq2625} and the embedding $U \hookrightarrow X$ imply that 
	$\{G_n\}$ is bounded in $\LL(X, Y)$,
      we may assume w.l.o.g.\ that the sequence $\{G_n\}$
	satisfies $G_n \WOT G$ in $\LL(X, Y)$
	for some $G \in \partial_B^{sw}S(x)$, see \cref{th:BanachAlaoglu} and \cref{lemma:uppersemi}.
	Due to the reflexivity of $U$  and $U \compactly X$,
	we further assume w.l.o.g.\ that the sequence $e_n := z_n/\|z_n\|_{U}$  
	converges weakly in $U$ and strongly in $X$ to some $e \in U$.
	From \cref{lemma:convex1},
	$G \in \partial_B^{sw}S(x)$,
	and
	$G_n \in \partial_B^{sw}S(x + z_n)$,
	we now get
	\[
		S(x) - S(x + z_n) \geq - G_n z_n
		\quad\text{and}\quad
		S(x + z_n) - S(x) \geq   G   z_n
		\qquad\mu\text{-a.e.\ in }\Omega,
	\]
and, as a consequence,
	\begin{equation}
	\label{eq:randomeq27636gd636354}
		0
		\geq  \frac{ S(x + z_n) - S(x) - G_n z_n}{\|z_n\|_{U}} 
		\geq  \frac{ G z_n - G_n z_n}{\| z_n\|_{U}} 
		=
		(G - G_n) e_n
		\qquad \mu\text{-a.e.\ in }\Omega. 
	\end{equation}
	Integrating (or taking the essential supremum in the case $p=\infty$) in \eqref{eq:randomeq27636gd636354}
	gives 
	\begin{equation}
	\label{eq:randomeq27636gd63}
		\frac{\|S(x + z_n) - S(x) - G_n z_n \|_{L^p(\Omega)}}{\|z_n\|_{U}} 
		\leq
		\|(G- G_n)e_n\|_{L^p(\Omega)}.
	\end{equation}
	Note 
	that the choice of $p$ 
	and Hölder's inequality imply the existence of $\alpha \in (0,1]$ 
	with
	$	\norm*{
		v
		}_{L^p(\Omega)}
		\leq
		\norm*{
		v
		}_{L^q(\Omega)}^\alpha
		\norm*{
		v
		}_{L^r(\Omega)}^{1-\alpha}
	$
	for all $v \in L^q(\Omega) \cap L^r(\Omega)$.
	We thus have 
	\begin{equation}
	\label{eq:randomestimate435-1}
		\norm*{
			(G - G_n)e_n
			}_{L^p(\Omega)}
			\leq
			\norm*{
			(G - G_n)e_n
			}_{L^q(\Omega)}^{\alpha}
			\norm*{
			(G - G_n)e_n
			}_{L^r(\Omega)}^{1 - \alpha}.
	\end{equation}
	Due to \cref{lemma:bounded1}, the embedding $U \compactly X$, 
	the convergence $\|z_n\|_U \to 0$, and the identity $\|e_n\|_U = 1$, we know that there 
	exists a constant $C>0$ satisfying 
	$\norm*{
			(G - G_n)e_n
			}_{L^r(\Omega)}^{1 - \alpha} \leq C$.
	If we use this bound in \eqref{eq:randomestimate435-1} and employ the triangle inequality, 
	then it follows that 
		\begin{equation}
	\label{eq:randomestimate435-2}
		\norm*{
			(G - G_n)e_n
			}_{L^p(\Omega)}
			\leq C
			\parens*{
				\norm*{
				(G - G_n)e
				}_{L^q(\Omega)}
				+
				\norm*{
				(G - G_n)(e- e_n)
				}_{L^q(\Omega)}
			}^{\alpha}
	\end{equation}
	holds for all $n$. 
	Since $\{G_n\}$ is bounded in $\LL(X, Y)$ and since $Y$ is compactly embedded into $L^q(\Omega)$,
	\eqref{eq:randomestimate435-2} yields that, for a potentially larger constant $C$, we have 
			\begin{equation}
	\label{eq:randomestimate435-3}
		\norm*{
			(G - G_n)e_n
			}_{L^p(\Omega)}
			\leq C
			\parens*{
				\norm*{
				(G - G_n)e
				}_{L^q(\Omega)}
				+
				\norm*{
				e- e_n
				}_{X}
			}^{\alpha}.
	\end{equation}
	As $e_n \to e$ holds in $X$  and since the embedding $Y \compactly L^q(\Omega)$ and the 
	convergence $(G - G_n)e \weakly 0$ in $Y$ imply that $\norm*{
				(G - G_n)e
				}_{L^q(\Omega)} \to 0$ holds, 
				we obtain from \eqref{eq:randomestimate435-3} that 
				the norm $\norm*{
			(G - G_n)e_n
			}_{L^p(\Omega)}$ converges to zero. 
			In combination with \eqref{eq:randomeq27636gd63},
			this yields \eqref{eq:randomeq2635} and completes the proof.
\end{proof}

Note that the last result remains valid
 when the differential $\partial_B^{sw} S(x)$ is replaced by 
 the WOT-closure in $\LL(X,Y)$ of the convex hull of $\partial_B^{sw} S(x)$ (as one may easily check). 
 In applications, in which a convex Newton derivative is desirable, 
 this can thus always be achieved by enlarging the strong-weak Bouligand differential in \cref{th:mainabstractsemismooth}. 

\section{Application to obstacle-type VIs}
\label{sec:3}

In this section, we show that the results of \cref{sec:2} can be applied to 
solution maps 
of obstacle-type VIs of the form
\begin{equation*}
\label{eq:V}
\tag{\textup{VI}}
y \in K,\qquad 
\dual*{ Ay + f(y) - u}{ v - y }_V \geq 0\qquad \forall v \in K. 
\end{equation*}
Our standing assumptions are as follows:

\begin{assumption}[Standing assumptions for the analysis of \cref{sec:3}]%
\label{ass:VIs}%
Throughout this section, we assume the following (unless explicitly stated otherwise):
\begin{enumerate}
\item\label{VIass:item:i}
 $(\Omega, \Sigma, \mu)$ is a complete and finite measure space with associated real
Lebesgue spaces $(L^p(\Omega), \|\cdot\|_{L^p(\Omega)})$, $1 \leq p \leq \infty$.
\item\label{VIass:item:ii}
 $(V, \|\cdot\|_V)$ is a real separable Hilbert space such that
 $V \compactly L^q(\Omega)$
 is dense
 for a fixed $q \in [2,\infty]$.
Further, the truncations 
\[
\bracks*{ v }_{a_1}^{a_2} := \min\parens*{ a_2, \max \parens*{a_1, v} } 
\]
satisfy $\bracks*{ v }_{a_1}^{a_2} \in V$ for all $a_1, a_2 \in [-\infty, \infty]$
with $a_1 \leq 0 \leq a_2$ and all $v \in V$. 
Here, $\min(a_2, \cdot)$ and $\max(a_1, \cdot)$ act by superposition, i.e., $\mu$-a.e.\ in $\Omega$.
\item\label{VIass:item:iii}
We have $U := L^s(\Omega)$ with a fixed exponent
$s \in (1,\infty)$ satisfying $ s \geq q'$.
Here, $q'$ denotes the conjugate exponent
satisfying $1/q + 1/q' = 1$ (with $1/\infty := 0$).
The space $U$ is identified with a subset of $V^*$ via the (injective) embeddings 
$U = L^s(\Omega) \hookrightarrow L^q(\Omega)^* \hookrightarrow V^*$.
\item\label{VIass:item:iv}
 $A\colon V \to V^*$ is a linear and continuous operator which satisfies 
\begin{equation}
\label{eq:ellipticity}
\dual*{ Av}{ v}_V \geq c \|v\|_V^2\qquad \forall v \in V
\end{equation}
for some constant $c > 0$ and 
\begin{equation}
\label{eq:maxprop}
\min\parens*{
\dual*{ Av}{ \bracks{ v }_{a_1}^{a_2}}_V, \dual*{ A\bracks{ v }_{a_1}^{a_2}}{v}_V 
}
\geq
\dual*{A\bracks{v}_{a_1}^{a_2}}{ \bracks{v }_{a_1}^{a_2}}_V
\end{equation}
for all $v \in V$ and all $a_1, a_2 \in [-\infty, \infty]$
with $a_1 \leq 0 \leq a_2$.
\item\label{VIass:item:v} $f\colon \R \to \R$ is a nondecreasing, globally Lipschitz continuous, concave function.
 We identify $f$ with its
induced Nemytskii operator $f \colon V \to V^*$, i.e.,
\[
\langle f(v), w\rangle_V := \innerp*{ f(v)}{ w }_{L^2(\Omega)}\qquad \forall v,w \in V.
\]
\item\label{VIass:item:vi} 
 $K \subset V$ is a nonempty, closed, convex set satisfying 
\begin{equation}
\label{eq:Kconds}
\begin{aligned}
v  \in K, z \in V \quad &\Rightarrow \quad v + \max(0, z) \in K,
\\
v_1 , v_2 \in K \quad &\Rightarrow \quad \min(v_1, v_2)  \in K.
\end{aligned}
\end{equation}
\item $u \in V^*$ is a given parameter (the argument of the solution map).
\end{enumerate}
\end{assumption}\pagebreak

Note that, due to the assumption $q \geq 2$, the global Lipschitz continuity of $f$, the embedding $V \hookrightarrow L^q(\Omega)$, 
and the fact that $(\Omega, \Sigma, \mu)$ is finite, 
we have 
\begin{equation*}
\begin{aligned}
\abs*{\langle f(v), w\rangle_V }
&=
\abs*{\int_\Omega \parens*{ f(v) - f(0) + f(0)}w \, \d\mu }
\\
&\leq
\parens*{ C_1 \| v \|_{L^2(\Omega)} + |f(0)|\mu(\Omega)^{1/2}  }\| w \|_{L^2(\Omega)}
\leq
C_2\parens*{ \| v \|_{V} + 1 } \| w \|_{V}
\end{aligned}
\end{equation*}
for all $v,w \in V$ with some constants $C_1, C_2 \in \R$. The dual pairing in point \ref{VIass:item:v} 
is thus sensible. We begin by checking
that the solution map $S\colon u \mapsto y$ of \eqref{eq:V}
fits into the setting of \cref{ass:functionalanalytic}.

\begin{proposition}[Solvability]%
\label{prop:solvability}%
For all $u \in V^*$,
the variational inequality \eqref{eq:V} possesses a unique solution $S(u) := y \in V$.
The solution map $S\colon V^*\to V$, $u \mapsto y$, 
of \eqref{eq:V} is globally Lipschitz continuous, i.e., there exists a constant $C>0$ such that 
\begin{equation}
\label{eq:SLipschitz}
\norm*{
S(u_1) - S(u_2)
}_V
\leq
C \|u_1 - u_2\|_{V^*}\qquad \forall u_1, u_2 \in V^*.
\end{equation}
\end{proposition}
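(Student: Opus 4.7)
The plan is to recognize \eqref{eq:V} as a standard variational inequality driven by a strongly monotone, Lipschitz continuous operator and to invoke the classical Lions--Stampacchia existence theorem for such problems. Concretely, I would set $T := A + f \colon V \to V^*$ and first verify that $T$ is well-defined, Lipschitz continuous, and strongly monotone; then solvability and uniqueness follow immediately, and the Lipschitz estimate \eqref{eq:SLipschitz} is obtained by testing the VIs for $S(u_1)$ and $S(u_2)$ against each other.

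For the well-definedness and continuity of the Nemytskii operator, I would use that $q \geq 2$ together with the finiteness of $\mu$, which yields $V \hookrightarrow L^q(\Omega) \hookrightarrow L^2(\Omega)$. Denoting the Lipschitz constant of $f \colon \R \to \R$ by $L$, a direct estimate gives
\begin{equation*}
\abs*{\dual*{f(v_1) - f(v_2)}{w}_V}
=
\abs*{\int_\Omega (f(v_1) - f(v_2))\, w\, \d\mu}
\leq L \norm{v_1 - v_2}_{L^2(\Omega)} \norm{w}_{L^2(\Omega)}
\end{equation*}
for all $v_1, v_2, w \in V$, so $f \colon V \to V^*$ is globally Lipschitz, and together with the linearity and continuity of $A$, this makes $T \colon V \to V^*$ Lipschitz. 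For strong monotonicity, I would exploit the pointwise monotonicity of $f$ and the coercivity \eqref{eq:ellipticity} of $A$: since $f$ is nondecreasing, $(f(v_1) - f(v_2))(v_1 - v_2) \geq 0$ pointwise $\mu$-a.e., hence $\dual*{f(v_1) - f(v_2)}{v_1 - v_2}_V \geq 0$, and therefore
\begin{equation*}
\dual*{T v_1 - T v_2}{v_1 - v_2}_V
\geq
c \norm{v_1 - v_2}_V^2
\qquad \forall v_1, v_2 \in V.
\end{equation*}

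Existence and uniqueness of $S(u) \in K$ for every $u \in V^*$ now follow from the standard theorem on variational inequalities for strongly monotone, hemicontinuous operators on nonempty, closed, convex subsets of a Hilbert space; see, e.g., Kinderlehrer--Stampacchia or Br\'ezis. For the Lipschitz dependence, let $y_i := S(u_i) \in K$, $i = 1, 2$, test the VI for $y_1$ with $v = y_2$ and vice versa, and add the resulting inequalities to obtain
\begin{equation*}
\dual*{T y_1 - T y_2}{y_1 - y_2}_V
\leq
\dual*{u_1 - u_2}{y_1 - y_2}_V
\leq
\norm{u_1 - u_2}_{V^*} \norm{y_1 - y_2}_V.
\end{equation*}
Combined with strong monotonicity, this yields $\norm{y_1 - y_2}_V \leq c^{-1}\norm{u_1 - u_2}_{V^*}$, as desired.

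There is no real obstacle here: the concavity of $f$, the truncation stability of $V$, the lattice structure \eqref{eq:Kconds} of $K$, and the truncation inequality \eqref{eq:maxprop} for $A$ are not needed for this proposition and will only come into play later when deriving finer regularity or stability estimates for $S$. The only minor care needed is the passage from the pointwise Lipschitz/monotonicity of $f \colon \R \to \R$ to the corresponding operator-theoretic properties on $V$, which is handled via the embedding $V \hookrightarrow L^2(\Omega)$ provided by $q \geq 2$ and $\mu(\Omega) < \infty$.
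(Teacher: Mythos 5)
Your proof is correct and amounts to the same argument the paper invokes: the paper simply cites \cite[Theorem~4-3.1]{Rodrigues1987}, which is precisely the Lions--Stampacchia-type result for strongly monotone Lipschitz operators on closed convex sets, whereas you verify its hypotheses in full detail (Lipschitz continuity and strong monotonicity of $A+f$ via the embedding $V \hookrightarrow L^2(\Omega)$ and the pointwise monotonicity of $f$) and rederive the stability estimate by the standard testing argument. Everything checks out; the only difference is that you spell out what the paper delegates to the reference.
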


\begin{proof}
This follows immediately from \cite[Theorem~4-3.1]{Rodrigues1987}.
\end{proof}

Let us now define  $X := V^*$ and $Y := V$ and let $U = L^s(\Omega)$, $s$, and $q$
be as in \cref{ass:VIs}.
Then
it follows from our assumptions on $V$ and $s$ that $X$ is a separable Banach space, 
that $Y$ is a separable and reflexive Banach space that is continuously and compactly embedded 
into $L^q(\Omega)$, and that $U$ is a reflexive Banach space, cf.\ \cite[Theorems~5.2.11, 5.2.15]{Megginson1998}. 
From Schauder's theorem, the compactness, continuity, and density of the embedding 
$V \hookrightarrow L^q(\Omega)$,   
the finiteness of $(\Omega, \Sigma, \mu)$, and again our assumptions on $s$, we further obtain that 
$U$ embeds continuously and compactly into $X = V^*$.
In summary, this shows that the measure space $(\Omega, \Sigma, \mu)$ and the spaces $X = V^*$, $Y = V$, and $U$
associated with \eqref{eq:V}
satisfy
\cref{ass:functionalanalytic}\ref{ass:functionalanalytic:i}--\ref{ass:functionalanalytic:iv}.
Note that, from \eqref{eq:SLipschitz}, we also obtain that the solution operator 
$S\colon V^* = X \to Y = V$ of \eqref{eq:V} satisfies a local Lipschitz estimate of the form \eqref{eq:randomeq2625}. 
To see that $S$  fulfills the remaining conditions in \cref{ass:functionalanalytic}\ref{ass:functionalanalytic:v} too,
we note the following.

\begin{lemma}[Pointwise-a.e.\ convexity]%
\label{lemma:ptwconvex}%
The solution operator $S\colon V^* \to V$, $u \mapsto y$, of \eqref{eq:V}
is pointwise-a.e.\ convex, i.e., for all $u_1, u_2 \in V^*$ and all $\lambda \in [0,1]$, it holds 
\begin{equation*}
S(\lambda u_1 + (1 - \lambda)u_2) \leq \lambda S(u_1) + (1-\lambda)S(u_2)\quad\mu\text{-a.e.\ in }\Omega.
\end{equation*}
\end{lemma}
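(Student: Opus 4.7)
The plan is to carry out a Stampacchia-type truncation test that directly compares $S(\lambda u_1 + (1-\lambda) u_2)$ with $\lambda S(u_1) + (1-\lambda) S(u_2)$ on the set where the inequality could fail. Concretely, I would set $y_i := S(u_i)$ for $i=1,2$, $y_\lambda := S(\lambda u_1 + (1-\lambda) u_2)$, and $w := \lambda y_1 + (1-\lambda) y_2$, and introduce the positive-part excess $z := [y_\lambda - w]_0^\infty = \max(0, y_\lambda - w)$. By the truncation-stability of $V$ in \cref{ass:VIs}\ref{VIass:item:ii}, we have $z \in V$, and the claim reduces to $z = 0$.

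The first nontrivial step is to verify admissibility of the perturbed states as test functions in \eqref{eq:V}. Writing $z$ in the form $\max(0, \cdot)$, the first implication in \eqref{eq:Kconds} delivers $y_i + z \in K$ for $i = 1, 2$. On the other hand, $w \in K$ by convexity of $K$, and $y_\lambda \in K$, so the second implication in \eqref{eq:Kconds} yields $y_\lambda - z = \min(y_\lambda, w) \in K$. I would then plug $y_1 + z$ into the VI at $y_1$, $y_2 + z$ into the VI at $y_2$, and $y_\lambda - z$ into the VI at $y_\lambda$, take the $\lambda$-convex combination of the first two, and subtract the third. The terms involving $u_1, u_2$, and $\lambda u_1 + (1-\lambda) u_2$ cancel, leaving
\begin{equation*}
\dual*{A(y_\lambda - w)}{z}_V
+
\dual*{f(y_\lambda) - \lambda f(y_1) - (1-\lambda) f(y_2)}{z}_V
\le 0.
\end{equation*}

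It remains to bound the two summands from below. For the leading term I would apply \eqref{eq:maxprop} to $v := y_\lambda - w$ with $a_1 = 0$ and $a_2 = \infty$, observing that $[v]_0^\infty = z$; combined with the ellipticity \eqref{eq:ellipticity}, this yields $\dual{A(y_\lambda - w)}{z}_V \ge \dual{Az}{z}_V \ge c\|z\|_V^2$. For the Nemytskii term I would use that $z$ is supported on $\{y_\lambda \ge w\}$: monotonicity of $f$ gives $f(y_\lambda) \ge f(w)$ there, and concavity of $f$ gives $f(w) \ge \lambda f(y_1) + (1-\lambda) f(y_2)$ pointwise, so, recalling the $L^2$-representation of $f$ from \cref{ass:VIs}\ref{VIass:item:v} and the fact that $z \ge 0$, the second dual pairing is nonnegative. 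Adding these two bounds produces $c\|z\|_V^2 \le 0$, forcing $z = 0$ as desired.

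The main obstacle is not computational but structural: the argument hinges on every candidate test function remaining simultaneously inside $V$ and inside $K$, which is precisely what the truncation clause in \cref{ass:VIs}\ref{VIass:item:ii} and the two lattice implications in \eqref{eq:Kconds} are engineered to provide. Once this admissibility bookkeeping is in place, the maximum-principle bound \eqref{eq:maxprop}, the ellipticity of $A$, and the concavity-monotonicity of $f$ slot together almost mechanically.
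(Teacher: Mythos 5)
Your proof is correct and follows essentially the same route as the paper's: you use the same three test functions $y_1 + z$, $y_2 + z$, $y_\lambda - z$ (admissible via the two lattice clauses of \eqref{eq:Kconds} and truncation stability of $V$), take the $\lambda$-convex combination of the VIs, and then invoke \eqref{eq:maxprop} plus ellipticity for the leading term and monotonicity plus concavity of $f$ for the Nemytskii term. The only cosmetic difference is that you label the convex combination $w$ and work with the excess $z = \max(0, y_\lambda - w)$, whereas the paper sets $w := y_{12} - \lambda y_1 - (1-\lambda)y_2$ and works directly with $\max(0,w)$; the chain of estimates is the same.
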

\begin{proof}
The proof follows standard lines, see, e.g., \cite[Lemma~6.3(iii)]{ChristofWachsmuth2021}. 
We include it for the convenience of the reader and since the setting in \cref{ass:VIs}
is slightly more general than what is typically considered in the literature. 
Suppose that $u_1, u_2 \in V^*$ and $\lambda \in [0,1]$ are given 
and set $y_1 := S(u_1)$, $y_2 := S(u_2)$, $y_{12} := S(\lambda u_1 + (1-\lambda)u_2)$,
and $w :=  y_{12} - \lambda y_1 - (1 - \lambda)y_2$. To prove the lemma,
we have to show that $w \leq 0$ holds $\mu$-a.e.\ in $\Omega$ or, equivalently, that $\max(0, w) = 0$  $\mu$-a.e.\ in $\Omega$.
To this end, we note that our assumptions on $V$ and $K$
imply that $y_1 + \max(0, w) \in K$ and $y_2 + \max(0, w) \in K$ holds and that 
\[
y_{12} - \max(0, w) = y_{12} - \max(0, y_{12} - \lambda y_1 - (1 - \lambda)y_2) = \min(y_{12}, \lambda y_1 + (1 - \lambda)y_2) \in K.
\]
The above allows us to use $y_1 + \max(0, w)$, $y_2 + \max(0, w)$, and $y_{12} - \max(0, w)$
as test functions in the VIs for $y_1$, $y_2$, and $y_{12}$, respectively. This yields
\begin{subequations}
\begin{align}
\dual*{ A y_1 + f(y_1)-  u_1 }{  \max(0, w) }_V \geq 0, \label{eq:trip:a)}
\\
\dual*{ A y_2 + f(y_2)-  u_2 }{  \max(0, w) }_V \geq 0, \label{eq:trip:b)}
\\
\dual*{ Ay_{12}  + f(y_{12})- \lambda u_1   - (1 - \lambda) u_2}{  -\max(0, w)  }_V \geq 0. \label{eq:trip:c)}
\end{align}
\end{subequations}
Note that, due to the definition of $w$ and the concavity and monotonicity of $f$,
we know that
\begin{equation*}
\begin{aligned}
&\dual{\lambda f(y_1) + (1 - \lambda)f(y_2) - f(y_{12}) }{\max(0, w) }_V
\\
&\qquad=
\int_\Omega \left (\lambda f(y_1) + (1 - \lambda)f(y_2) - f(y_{12}) \right )\max(0, w) \d\mu
\\
&\qquad\leq 
\int_\Omega \left (  f(\lambda y_1 + (1 - \lambda) y_2) - f(y_{12})  \right )\max(0, y_{12} - \lambda y_1 - (1 - \lambda)y_2) \d\mu
\\
&\qquad \leq 0,
\end{aligned}
\end{equation*} 
where the last inequality follows from a simple distinction of cases. This means 
that, by multiplying \eqref{eq:trip:a)} with $\lambda$ and \eqref{eq:trip:b)} with $(1 - \lambda)$,
by adding the resulting estimates to \eqref{eq:trip:c)}, and by subsequently exploiting \eqref{eq:ellipticity} and \eqref{eq:maxprop},
we obtain that 
\begin{equation*}
\begin{aligned}
	0 &\leq \dual{ Aw + f(y_{12})- \lambda f(y_1) - (1 - \lambda)f(y_2)}{  - \max(0, w) }_V
	\\
	&\leq - \dual*{ A w}{  \max(0, w) }_V
	\leq - \dual*{ A \max(0, w) }{  \max(0, w) }_V
	\leq - c \|\max(0, w) \|_V^2.
\end{aligned}
\end{equation*}
Thus, $\max(0, w) = 0$ $\mu$-a.e.\ and the proof is complete.
\end{proof}

In combination with our previous observations, 
\cref{lemma:ptwconvex} shows that the solution mapping $S\colon u \mapsto y$
of \eqref{eq:V} satisfies all of the conditions in \cref{ass:functionalanalytic} 
with $r = q$, see the comments before  \cref{def:Gateaux}.
To see that we can also consider exponents $r$ greater than $q$ in the situation of 
\eqref{eq:V} (and thus obtain Newton differentiability in stronger $L^p(\Omega)$-spaces by 
\cref{th:mainabstractsemismooth}), we
employ a generalized version of 
a well-known truncation argument of Stampacchia, see
\cite[Théorème~1]{Stampacchia1960}, \cite[Lemma~II.B2]{KinderlehrerStampacchia1980}. 
For the sake of reusability, we state this result in a format that makes it completely 
independent of \cref{ass:VIs}. 

\begin{lemma}\label{lemma:truncation}
	Suppose that $(\Omega, \Sigma, \mu)$ is a finite measure space with associated real Lebesgue spaces
	$(L^p(\Omega), \|\cdot\|_{L^p(\Omega)})$, $1 \leq p \leq \infty$.
	Let $q \in (1,\infty)$,  $s \in (1,\infty]$
	be exponents satisfying $\smash{\frac1s + \frac1q < 1}$ and $\smash{\frac1s  + \frac2q - 1 \ne 0}$, and 
	assume that 
	$u \in L^s(\Omega)$, $v \in L^q(\Omega)$
	are given
	such that
	the shrinkages $v_k := v -  \min\parens*{ k, \max \parens*{-k, v} } $, $k\geq 0$, satisfy 
	\begin{equation}
		\label{eq:randomeq278e46478}
		\norm{v_k}_{L^q(\Omega)}^2
		\leq 
		\alpha \int_\Omega \abs{ u v_k } \, \d\mu
		<
		\infty
		\qquad \forall k \geq k_0
	\end{equation}
	for some $k_0, \alpha \geq 0$. Define 
	$\sigma :=  \smash{\parens{\frac1s + \frac2q - 1}^{-1}}$.
	Then the following is true:
	\begin{enumerate}
		\item
			\label{lemma:truncation:1}
			In the case $\sigma < 0$,
			there exists a constant $C = C(s,q, \mu(\Omega)) > 0$ satisfying
			\begin{equation}
				\label{eq:Linfty2}
				\norm{v}_{L^\infty(\Omega)} \leq k_0 + C \alpha \norm{u}_{L^s(\Omega)}.
			\end{equation}
		\item
			\label{lemma:truncation:2}
			In the case $\sigma > 0$,
			there exists a constant $C = C(s,q,\mu(\Omega)) > 0$
			satisfying 
			\begin{equation}
				\label{eq:weak_L_sigma}
				\mu(\set{\omega \in \Omega \given \abs{v(\omega)} \ge k})
				\le
				C
				\frac{
					\alpha^\sigma \norm{u}_{L^s(\Omega)}^{\sigma}
					+
					k_0^{\sigma}
				}
				{k^\sigma}
				\qquad
				\forall k > k_0
			\end{equation}
			and 
			\begin{equation}
				\label{eq:Lr}
				\norm{v}_{L^r(\Omega)} \leq C
				\parens*{\frac{r}{\sigma - r}}^{1/r} \parens{ k_0 + \alpha \norm{u}_{L^s(\Omega)} }\qquad \forall r \in [1,\sigma).
			\end{equation}
	\end{enumerate}
\end{lemma}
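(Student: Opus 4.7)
The plan is to extract from the hypothesis a single ``master inequality'' for the distribution function $\lambda(k) := \mu(\set{|v| > k})$ and then to apply a Stampacchia-style iteration whose form depends on the sign of $\sigma$. Writing $A(k) := \set{|v| > k}$, one checks that $v_k$ is supported on $A(k)$ and that $|v_k| = |v|-k$ there. The auxiliary exponent $t$ defined by $1/t := 1 - 1/s - 1/q$ is positive and finite thanks to the hypothesis $1/s + 1/q < 1$. The three-factor Hölder inequality applied to $\int_\Omega |u v_k| \, \d\mu$ with exponents $s$, $q$, $t$, combined with \eqref{eq:randomeq278e46478} and the elementary estimate $(h-k)^q \mu(A(h)) \leq \norm{v_k}_{L^q(\Omega)}^q$ for $h > k$, will yield, after dividing once by $\norm{v_k}_{L^q(\Omega)}$, the master inequality
\[
	\mu(A(h)) \leq \parens*{ \frac{\alpha \norm{u}_{L^s(\Omega)}}{h-k} }^q \mu(A(k))^{q/t}
	\qquad \forall h > k \geq k_0.
\]
A short arithmetic check confirms that $q/t > 1$ is equivalent to $\sigma < 0$ and $q/t < 1$ is equivalent to $\sigma > 0$, which separates the two cases.

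For item \ref{lemma:truncation:1}, the exponent $q/t$ exceeds one, so the master inequality falls into the setting of the classical Stampacchia iteration lemma: the nonincreasing function $\mu(A(\cdot))$ must vanish beyond $k_0 + d$ with $d$ controlled by $\alpha \norm{u}_{L^s(\Omega)}$ and by $\mu(A(k_0))^{(q/t-1)/q} \leq \mu(\Omega)^{(q/t-1)/q}$. This immediately yields $\norm{v}_{L^\infty(\Omega)} \leq k_0 + d \leq k_0 + C(s,q,\mu(\Omega)) \alpha \norm{u}_{L^s(\Omega)}$, which is \eqref{eq:Linfty2}.

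For item \ref{lemma:truncation:2}, I will set $\rho := q/t \in (0,1)$, substitute $h = 2k$ in the master inequality, and multiply through by $(2k)^\sigma$. The key identity $1/\sigma + 1/t = 1/q$ (immediate from the definitions) makes the various powers of $k$ cancel exactly, so that the rescaled quantity $\Lambda(k) := k^\sigma \mu(A(k))$ satisfies the autonomous recursion
\[
	\Lambda(2k) \leq 2^\sigma (\alpha \norm{u}_{L^s(\Omega)})^q \, \Lambda(k)^\rho .
\]
Since $\rho < 1$, a standard fixed-point argument for the scalar inequality $a_{n+1} \leq C a_n^\rho$ yields a uniform bound of the form $\max(\Lambda(k_0), C^{1/(1-\rho)})$ on $\{\Lambda(2^n k_0)\}_{n\in\N}$; combining this with $1/(1-\rho) = \sigma/q$ and $\Lambda(k_0) \leq k_0^\sigma \mu(\Omega)$ produces $\Lambda(2^n k_0) \leq C(s,q,\mu(\Omega))(k_0^\sigma + (\alpha \norm{u}_{L^s(\Omega)})^\sigma)$ for all $n$. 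Filling in the dyadic gaps by the monotonicity of $\lambda$ then yields \eqref{eq:weak_L_sigma}. Finally, \eqref{eq:Lr} will follow from \eqref{eq:weak_L_sigma} via the layer-cake identity $\norm{v}_{L^r(\Omega)}^r = r \int_0^\infty k^{r-1} \mu(A(k)) \, \d k$, split at $k_0$ (with $\mu(A(k)) \leq \mu(\Omega)$ used on $[0,k_0]$ and \eqref{eq:weak_L_sigma} used on $[k_0,\infty)$); the factor $r/(\sigma-r)$ will arise from $\int_{k_0}^\infty k^{r-1-\sigma}\, \d k$, which converges precisely because $r < \sigma$.

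The hard part will be case \ref{lemma:truncation:2}: verifying that the identity $1/\sigma + 1/t = 1/q$ really does collapse the rescaled recursion to an autonomous one in $\Lambda$, and then carefully bookkeeping the constants so that the final bound depends only on $s$, $q$, and $\mu(\Omega)$ and has the precise form stated.
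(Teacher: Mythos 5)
Your derivation of the master inequality
\[
\mu(A(h)) \leq \parens*{ \frac{\alpha \norm{u}_{L^s(\Omega)}}{h-k} }^q \mu(A(k))^{q/t}
\qquad \forall h > k \geq k_0,
\]
via the three-factor H\"older inequality, the bound $\norm{v_k}_{L^q(\Omega)} \geq (h-k)\mu(A(h))^{1/q}$, and cancellation of one power of $\norm{v_k}_{L^q(\Omega)}$, is exactly the paper's inequality with $\tau = q/t$, and your treatment of case \ref{lemma:truncation:1} matches the paper's appeal to Stampacchia's iteration lemma. For case \ref{lemma:truncation:2} and the weak estimate \eqref{eq:weak_L_sigma} you take a genuinely different route: instead of citing the second half of Stampacchia's ``Lemme Pr\'eliminaire'' (which the paper notes contains a typo that it must correct), you rescale by $k^\sigma$, use the identity $1/\sigma + 1/t = 1/q$ to collapse to the autonomous dyadic recursion $\Lambda(2k) \leq 2^\sigma(\alpha\norm{u}_{L^s(\Omega)})^q\Lambda(k)^{q/t}$, and then bound the orbit of $a_{n+1} \le C a_n^\rho$ by $\max(a_0, C^{1/(1-\rho)})$. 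This is correct (after a small supplementary limiting argument to handle $k_0 = 0$, where the dyadic grid $\set{2^nk_0}$ degenerates) and is arguably more self-contained and transparent than the citation in the paper.

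However, there is a genuine gap in your derivation of \eqref{eq:Lr}. You propose to split the layer-cake integral at $k_0$, using $\mu(A(k)) \le \mu(\Omega)$ on $[0,k_0]$ and \eqref{eq:weak_L_sigma} on $[k_0,\infty)$. That produces, among other terms,
\[
C\,\frac{r}{\sigma-r}\,\alpha^\sigma\norm{u}_{L^s(\Omega)}^\sigma\,k_0^{\,r-\sigma},
\]
and since $r - \sigma < 0$ this term is \emph{not} dominated by $\bigl(k_0 + \alpha\norm{u}_{L^s(\Omega)}\bigr)^r$: it blows up as $k_0 \to 0^+$ (and the integral $\int_{k_0}^\infty k^{r-1-\sigma}\,\d k$ outright diverges when $k_0 = 0$), while the claimed bound \eqref{eq:Lr} remains finite. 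The fix is to split not at $k_0$ but at the balance point $k_1 := \max\set{k_0, T}$, where $T$ is chosen so that the two available bounds $\mu(\Omega)$ and $T^\sigma k^{-\sigma}$ on $\mu(A(k))$ cross; this is what the paper does, and it yields $\int_{k_1}^\infty k^{r-1-\sigma}\,\d k = k_1^{r-\sigma}/(\sigma-r)$ with $T^\sigma k_1^{r-\sigma} \le T^r$, which does lead to the stated form. As written, your argument does not give \eqref{eq:Lr}.
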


Note that, in the case $\smash{\frac1s + \frac2q - 1 = 0}$, one can simply decrease $s$ slightly 
and then invoke point \ref{lemma:truncation:2} above. This then yields $v \in L^r(\Omega)$ for all $r \in [1,\infty)$.

\begin{proof}
	By rescaling $u$, it is enough to consider the case $\alpha = 1$.
We
define (up to sets of measure zero)
$L(k) := \set{\omega \in \Omega \given |v(\omega)| \geq k}$ for all $k\geq 0$. 
From
the definition of  $v_k$, it follows 
\begin{equation}
\label{eq:random2635}
	\norm{v_k}_{L^{q}(\Omega)}
	\geq 
	\parens[\bigg]{\int_{L(m)} (\abs{v} - k)^{q} \, \d\mu }^{1/q}
	\geq 
	(m - k) \mu(L(m))^{1/q}\qquad \forall m \geq k\geq 0,
\end{equation}
and, from Hölder's inequality
and
$\smash{\frac1q + \frac1s < 1}$,
we obtain 
\begin{equation}
\label{eq:random2635-2}
	\int_\Omega  \abs{u v_k}\, \d\mu 
	=
	\int_{L(k)} \abs{u v_k} \, \d\mu
	\leq \norm{u}_{L^s(\Omega)}  \mu(L(k))^{1 - 1/q -  1/s}  \norm{v_k}_{L^q(\Omega)}
	.
\end{equation}
In combination with \eqref{eq:randomeq278e46478}, 
the estimates \eqref{eq:random2635} and \eqref{eq:random2635-2} yield
\begin{equation*}
(m - k) \mu(L(m))^{1/q}
\leq
\norm{u}_{L^{s}(\Omega)}   \mu(L(k))^{1 - 1/q -  1/s}
\qquad\forall m \geq k\geq k_0.
\end{equation*}
This can be written as
\begin{equation*}
 \mu(L(m))  \leq 
 \norm{u}_{L^{s}(\Omega)}^q (m - k)^{-q} \mu(L(k))^\tau 
 \qquad\forall m > k\geq k_0
\end{equation*}
with 
\[
	\tau
	:= q \, \parens[\Big]{1 -  \frac{1}{q} - \frac{1}{s}}
	=
	q \, \parens[\Big]{ 1 - \frac2q - \frac1s } + 1
	=
	-\frac{q}{\sigma} + 1
	.
\]
We now distinguish between the cases \ref{lemma:truncation:1} and \ref{lemma:truncation:2}.
In case \ref{lemma:truncation:1},
we have $\tau > 1$ and may invoke 
\cite[Lemme~Préliminaire]{Stampacchia1960},
see also
\cite[Lemma~II.B1]{KinderlehrerStampacchia1980},
to deduce that
\begin{equation}
	\label{eq:Nullstelle}
	\mu(L(m)) = 0
	\quad\text{holds for}\quad
	m = 
	k
	+
	2^{\tau/(\tau-1)} \mu(L(k))^{(\tau-1)/q} \norm{u}_{L^{s}(\Omega)}
\end{equation}
whenever $k \ge k_0$.
Choosing $k = k_0$ in \eqref{eq:Nullstelle} yields \eqref{eq:Linfty2}. 
It remains to prove \ref{lemma:truncation:2}.
For this case, we have $\tau \in (0,1)$
and it follows from 
\cite[Lemme~Préliminaire]{Stampacchia1960}
that 
\begin{equation}
	\label{eq:weak_estimate}
	\mu(L(k))
	\le
	\hat C k^{-\sigma}
	\parens*{
		\norm{u}_{L^s(\Omega)}^{\sigma}
		+
		k_0^{\sigma} \mu(L(k_0))
	}
	\qquad\forall k > k_0
\end{equation}
holds with some constant $\hat C = \hat C(q,s) \ge 0$.
Note that the ``$+$'' on the right-hand side of \eqref{eq:weak_estimate} is (erroneously) missing in the statement of
\cite[Lemme~Préliminaire]{Stampacchia1960}.
By definition of $L(k)$, this yields \eqref{eq:weak_L_sigma}.
To prove \eqref{eq:Lr}, we suppose that $r \in [1,\sigma)$ is given and define 
$T^\sigma :=
	\hat C \,
	\parens[\big]{
		\norm{u}_{L^s(\Omega)}^{\sigma}
		+
		k_0^{\sigma} \mu(\Omega)
	}
$
and $k_1 := \max\set{ k_0, T}$.
Due to
\(
	r - \sigma - 1
	<
	-1
\)
and
$r - 1 \ge 0$,
we may employ a layer cake representation and \eqref{eq:weak_estimate} to get
\begin{align*}
	\norm{v}_{L^r(\Omega)}^r
	&=
	\int_0^\infty r \mu(L(k)) k^{r-1} \, \d k
	=
	\parens[\bigg]{\int_0^{k_1} + \int_{k_1}^\infty} r \mu(L(k)) k^{r-1} \, \d k
	\\
	&\le
	r \mu(\Omega)
	\int_0^{k_1}
	k^{r - 1}
	\, \d k
	+
	r T^\sigma
	\int_{k_1}^\infty
	k^{r - \sigma - 1}
	\, \d k
	\\
	&=
	 \mu(\Omega)  k_1^{r}
	-
	\frac{ r T^\sigma }{
		r - \sigma
	}
	k_1^{
		r - \sigma
	}
	\le
	\parens[\bigg]{
		 \mu(\Omega) 
		+
		\frac{r}{
			\sigma - r
		}
	}
	k_1^r
	.
\end{align*}
Plugging in the definition of $k_1$ and using trivial estimates now yields \eqref{eq:Lr}.
\end{proof}

\begin{remark}
	\label{rem:weak_lp}
	Note that the estimate \eqref{eq:weak_estimate} implies that $v$
	belongs to the weak Lebesgue space $L^{\sigma, \infty}(\Omega)$, cf.\ \cite{Dilworth2001}.
	The remaining part of the proof of \eqref{eq:Lr} above is a standard interpolation argument
	that ensures $v \in L^r(\Omega)$.
\end{remark}

\begin{remark}
	\label{rem:non_finite_measure}
	An estimate similar to inequality \eqref{eq:Linfty2} can also be obtained 
	for an infinite $\mu$. 
	However, for such a measure, the missing finiteness has to be compensated with some regularity
	of $v$, namely,
	$v \in L^p(\Omega)$ for some $p \in [1,\infty)$. Indeed, under this $L^p$-assumption,
	we obtain in the situation of \cref{lemma:truncation}
	from Chebyshev's inequality that the number $k_1 := \max\set{k_0, \norm{v}_{L^p(\Omega)}}$
	satisfies 
	\(
		\mu(L(k_1))
		\le
		k_1^{-p} \norm{v}_{L^p(\Omega)}^p
		\le
		1
	\).	
	Using this estimate in equation \eqref{eq:Nullstelle} with $k =k_1$ yields 
	that $\mu(L(m)) = 0$ holds for 
	\(
		m = k_1
		+
		2^{\tau/(\tau-1)} \mu(L(k_1))^{(\tau-1)/q} \norm{u}_{L^{s}(\Omega)}
		\le
		\max\set{k_0, \norm{v}_{L^p(\Omega)}}
		+
		2^{\tau/(\tau-1)} \norm{u}_{L^{s}(\Omega)}
	\).
	By the definition of $L(m)$, this gives
	\(
		\norm{v}_{L^\infty(\Omega)} \leq \max\set{k_0, \norm{v}_{L^p(\Omega)}} + C(s,q) \norm{u}_{L^s(\Omega)}.
	\)
\end{remark}

As a straightforward consequence of \cref{lemma:truncation}, we obtain the next result.

\begin{lemma}[Improved Lipschitz estimate]%
\label{lemma:improvLip}%
Let $q \in [2, \infty]$ and $s \in (1, \infty)$
be the exponents from \cref{ass:VIs}. Define 
$\kappa :=  \smash{ \frac1s + \frac2q - 1}$ and 
\begin{equation}
\label{eq:Rset}
\RR
:=
\begin{cases}
[1,\infty] & \text{ if } q \neq \infty, \frac1s + \frac1q < 1, \text{ and } \kappa < 0,
\\
[1,\infty) & \text{ if } q \neq \infty, \frac1s + \frac1q < 1, \text{ and } \kappa = 0,
\\
\left [1,\frac1\kappa\right ) & \text{ if } q \neq \infty, \frac1s + \frac1q < 1, \text{ and } \kappa > 0,
\\
[1, q] & \text{ else}. 
\end{cases}
\end{equation}
Then, for every $r \in \RR$, there exists a constant $C >0$ satisfying
\begin{equation}
\label{eq:improvedLipschitz}
\norm*{ S(u + z) - S(u)
}_{L^r(\Omega)} \leq C \|z\|_{L^s(\Omega)}
\qquad\forall u \in V^*, z \in U.
\end{equation}
\end{lemma}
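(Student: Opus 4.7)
The plan is to combine the basic Lipschitz estimate~\eqref{eq:SLipschitz} with a one-sided Stampacchia truncation argument based on \cref{lemma:truncation}.

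First, for the ``else'' branch of~\eqref{eq:Rset}, i.e., when $q = \infty$ or $\frac{1}{s} + \frac{1}{q} \ge 1$, the claim reduces to $r \in [1,q]$ and follows directly from~\eqref{eq:SLipschitz} via the embeddings $L^s(\Omega) \hookrightarrow L^{q'}(\Omega) \hookrightarrow V^*$ (valid since $\mu(\Omega) < \infty$ and $s \ge q'$) and $V \hookrightarrow L^q(\Omega) \hookrightarrow L^r(\Omega)$.

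For the remaining branches (where $q < \infty$ and $\frac{1}{s} + \frac{1}{q} < 1$), I would set $y_1 := S(u)$, $y_2 := S(u+z)$, $w := y_2 - y_1$, and observe that, for every $k \ge 0$, both $[w]_0^k$ and $(w-k)_+ = [w - [w]_{-k}^k]_0^\infty$ belong to $V$ by the truncation property in \cref{ass:VIs}\ref{VIass:item:ii}. The core of the proof is the Stampacchia-type estimate
\begin{equation*}
\norm{(w-k)_+}_{L^q(\Omega)}^2 \le \alpha \int_\Omega \abs{z}(w-k)_+ \, \d\mu \qquad \forall k \ge 0
\end{equation*}
with a constant $\alpha > 0$ independent of $k$ and $z$. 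I would derive it by testing the VI for $y_1$ with $v := y_1 + (w-k)_+$---which lies in $K$ by the first property in~\eqref{eq:Kconds} since $(w-k)_+ \ge 0$---and the VI for $y_2$ with $v := y_2 - (w-k)_+$. The non-trivial step, and the principal obstacle in the argument, is verifying $y_2 - (w-k)_+ \in K$: the naive rewriting as $\min(y_2, y_1 + k)$ would require constant functions to lie in $V$, which is not assumed. I would instead use the pointwise identity
\begin{equation*}
y_2 - (w-k)_+ = \min\bigl(y_2,\, y_1 + [w]_0^k\bigr),
\end{equation*}
checked by distinguishing the cases $\{w > k\}$, $\{0 \le w \le k\}$, and $\{w < 0\}$. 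Since $[w]_0^k \ge 0$ belongs to $V$, the first property in~\eqref{eq:Kconds} gives $y_1 + [w]_0^k \in K$, and the second then yields $\min(y_2, y_1 + [w]_0^k) \in K$.

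Adding the two resulting inequalities, the monotonicity of $f$ makes $\dual{f(y_2) - f(y_1)}{(w-k)_+}_V \ge 0$ (as $(w-k)_+ > 0$ forces $y_2 > y_1$), while the splitting $w = [w]_{-\infty}^k + (w-k)_+$ combined with~\eqref{eq:maxprop} (applied with $a_1 = -\infty$, $a_2 = k$) and~\eqref{eq:ellipticity} gives $\dual{A w}{(w-k)_+}_V \ge \dual{A(w-k)_+}{(w-k)_+}_V \ge c\norm{(w-k)_+}_V^2$. The embedding $V \hookrightarrow L^q(\Omega)$ then yields the displayed Stampacchia bound. Applying \cref{lemma:truncation} to $v := w_+$ (whose shrinkage equals $(w-k)_+$ for $k \ge 0$) with $k_0 = 0$ produces the desired $L^r$-control of $w_+$ in terms of $\norm{z}_{L^s(\Omega)}$. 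The mirror construction with $v := y_2 + (-w-k)_+ \in K$ and $v := y_1 - (-w-k)_+ = \min\bigl(y_1, y_2 + [-w]_0^k\bigr) \in K$ yields the analogous bound on $w_-$, and~\eqref{eq:improvedLipschitz} follows via $\norm{w}_{L^r(\Omega)} \le \norm{w_+}_{L^r(\Omega)} + \norm{w_-}_{L^r(\Omega)}$. The borderline case $\kappa = 0$ is handled by decreasing $s$ slightly so that $\kappa$ becomes positive (as noted right after \cref{lemma:truncation}) and invoking $L^s(\Omega) \hookrightarrow L^{s-\varepsilon}(\Omega)$.
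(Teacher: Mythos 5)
Your proof is correct and follows essentially the same route as the paper: test the two VIs against shrinkage-type translates, exploit~\eqref{eq:ellipticity}, \eqref{eq:maxprop}, and the monotonicity of $f$ to obtain a Stampacchia bound, and then invoke \cref{lemma:truncation}. The only difference is cosmetic: the paper works with the two-sided shrinkage $(y_1 - y_2)_k := (y_1 - y_2) - [y_1 - y_2]_{-k}^k$, which handles both tails in one pass (feasibility of the test functions being verified via $y_1 - (y_1-y_2)_k \ge \min(y_1,y_2)$ and the first property in~\eqref{eq:Kconds}), whereas you split into the one-sided truncations $(w-k)_+$ and $(-w-k)_+$ and run the argument twice, with the feasibility of $y_2 - (w-k)_+$ verified via the explicit identity $y_2-(w-k)_+ = \min(y_2, y_1 + [w]_0^k)$. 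Both feasibility checks are valid and the resulting Stampacchia estimates are the same; your handling of the borderline case $\kappa = 0$ by slightly decreasing $s$ also matches the remark the paper gives directly after \cref{lemma:truncation}.
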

\begin{proof}
The ``else''-case follows from \eqref{eq:SLipschitz}, the finiteness of $\mu$, and the embeddings
$V \hookrightarrow L^q(\Omega)$ and $U \hookrightarrow V^*$.
To prove \eqref{eq:improvedLipschitz} 
in the remaining cases, we suppose that $u \in V^*$ and $z \in U$ are given, 
define $y_1 := S(u)$ and $y_2 := S(u + z)$,
and set
\[
(y_1 - y_2)_k  := y_1 - y_2 - [y_1 - y_2]_{-k}^k
\qquad\forall k \ge 0.
\]
From 
\[
y_1 - (y_1 - y_2)_k 
=
y_2 + \bracks{y_1 - y_2}_{-k}^k
=
\begin{cases}
y_2 + k & \text{ if } y_1 \geq k + y_2,
\\
y_2 - k & \text{ if } y_1  \leq y_2 - k,
\\
y_1 & \text{ if } |y_1 - y_2| < k,
\end{cases}
\]
it follows that $y_1 - (y_1 - y_2)_k  \geq \min(y_1, y_2)$ holds $\mu$-a.e.\ in $\Omega$.
Due to our assumptions on $K$, this implies $y_1 - (y_1 - y_2)_k \in K$.
Analogously, we also get $y_2 + (y_1 - y_2)_k \in K$.
By using these functions 
as test functions in the VIs for $y_1$ and $y_2$, respectively,
and by exploiting  the monotonicity of $f$, we obtain
\begin{equation*}
\begin{aligned}
0
&\leq
\dual*{ Ay_1 + f(y_1) - u}{ - (y_1 - y_2)_k }_V + \dual*{ Ay_2+ f(y_2) - u - z}{ (y_1 - y_2)_k}_V
\\
&\leq 
\dual*{ A(y_1 - y_2)}{ - (y_1 - y_2)_k }_V + \dual*{ -z}{ (y_1 - y_2)_k}_V,
\end{aligned}
\end{equation*}
which can also be written as 
\begin{equation}
\label{eq:randomeq2763gd36}
\dual*{ A(y_1 - y_2)}{  (y_1 - y_2)_k }_V \leq - \int_\Omega z (y_1 - y_2)_k \, \d\mu.
\end{equation}
Since 
\begin{equation*}
\begin{aligned}
\dual*{ Av}{  v - [v]_{-k}^k }_V 
&=
\dual*{ A(v - [v]_{-k}^k)}{  v - [v]_{-k}^k }_V 
+
\dual*{ A [v]_{-k}^k}{  v }_V - \dual*{ A [v]_{-k}^k}{  [v]_{-k}^k}_V
\\
&\geq c \|v - [v]_{-k}^k \|_V^2 + 0
\qquad \forall v \in V, k \geq 0
\end{aligned}
\end{equation*}
holds for a constant $c >0$ by \cref{ass:VIs}\ref{VIass:item:iv}, \eqref{eq:randomeq2763gd36},
the embedding $V \hookrightarrow L^q(\Omega)$, 
and the definition of $(y_1 - y_2)_k$ imply that there exists a constant $C>0$ with
\begin{equation*}
\|(y_1 - y_2)_k \|_{L^q(\Omega)}^2 \leq C\int_\Omega | z (y_1 - y_2)_k| \, \d\mu\qquad \forall k \geq 0. 
\end{equation*}
To complete the proof, it now suffices to invoke \cref{lemma:truncation}.
\end{proof}

\begin{remark}
	\label{rem:marcinkiewicz}
	In his seminal work \cite{Stampacchia1960},
	Stampacchia used the celebrated Mar\-cin\-kie\-wicz
	interpolation theorem
	\cite{Zygmund1989}
	to get a similar result
	for linear equations
	(even including the critical exponent $1/\kappa$ in the third case of \eqref{eq:Rset}). 
	It is not clear whether
	this interpolation theorem
	applies to $S$ 	in the situation of \cref{ass:VIs}.
\end{remark}

With \cref{lemma:ptwconvex,lemma:improvLip} in place, we are in the position 
to state the consequences of the analysis in \cref{sec:2} for the solution map $S$
of \eqref{eq:V}. Note that, in the situation of 
\eqref{eq:V}, the strong-weak Bouligand differential of $S$ at a point $u\in V^*$ is the subset 
of $\LL(V^*, V)$ given by 
\begin{equation}
	\label{eq:BouligandDiffVI}
	\partial_B^{sw}S(u)
	=
	\set[\big]{
	G 
	\given \exists \{u_n\} \subset \DD_S \colon u_n \to u \text{ in }V^*,~ S'(u_n) \WOT G \text{ in }\LL(V^*, V)}.
\end{equation}

\begin{corollary}[Semismoothness of the solution map of \eqref{eq:V}]%
\label{cor:VIsemismooth1}%
Let $q \in [2, \infty]$ and $s \in (1, \infty)$
be the exponents from \cref{ass:VIs}. Define
$\kappa :=  \smash{ \frac1s + \frac2q - 1}$ and
\begin{equation*}
\PP
:=
\begin{cases}
[1,\infty) & \text{ if } q \neq \infty, \frac1s + \frac1q < 1, \text{ and } \kappa \leq 0,
\\
\left [1,\frac1\kappa\right) & \text{ if } q \neq \infty, \frac1s + \frac1q < 1, \text{ and } \kappa > 0,
\\
[1, q] & \text{ else}. 
\end{cases}
\end{equation*}
Then the solution map $S\colon V^* \to V$ of the variational inequality \eqref{eq:V}
with the strong-weak Bouligand differential 
$\partial_B^{sw}S\colon V^* \rightrightarrows\LL(V^*, V)$ in \eqref{eq:BouligandDiffVI}
is Newton differentiable 
w.r.t.\ perturbations in $U = L^s(\Omega)$
with values in $L^p(\Omega)$ for all $p \in \PP$.
\end{corollary}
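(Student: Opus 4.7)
The plan is to apply \cref{th:mainabstractsemismooth} to the solution map $S\colon V^* \to V$ with $X := V^*$, $Y := V$, and $U := L^s(\Omega)$, and then to combine the resulting Newton differentiability with the finiteness of $\mu$ in order to obtain the full range $\PP$ claimed in the corollary.

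First, I would verify the standing hypotheses of \cref{ass:functionalanalytic}. The structural items \ref{ass:functionalanalytic:i}--\ref{ass:functionalanalytic:iv} have been established in the paragraph following \cref{prop:solvability}. For item~\ref{ass:functionalanalytic:v}, the local (in fact, global) Lipschitz estimate \eqref{eq:randomeq2625} follows from \eqref{eq:SLipschitz} in \cref{prop:solvability}; the pointwise-a.e.\ convexity \eqref{eq:convex} is exactly \cref{lemma:ptwconvex}; and the stronger stability estimate \eqref{eq:randomeq2625-25} with exponent $r$ is \eqref{eq:improvedLipschitz} in \cref{lemma:improvLip}, which is available for every $r$ in the set $\RR$ from \eqref{eq:Rset}.

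With these hypotheses in place, \cref{th:mainabstractsemismooth} yields Newton differentiability of $S$ (endowed with $\partial_B^{sw}S$ from \eqref{eq:BouligandDiffVI}) into $L^p(\Omega)$ for every $r \in \RR$ and every $p \in \{q\} \cup (\min(q,r), \max(q,r))$. Taking the union over $r \in \RR$ and using $q \geq 2$, I would obtain the following ranges for $p$: in the subcases with $q \neq \infty$, $1/s + 1/q < 1$, and $\kappa \leq 0$, the set $\RR$ contains $r$ arbitrarily close to (or equal to) $\infty$, so the union contains $[q, \infty)$; in the subcase $\kappa > 0$, where $\RR$ reaches up to $1/\kappa$ and the assumption $1/s+1/q < 1$ forces $s > q'$ and hence $1/\kappa > q$, the union contains $[q, 1/\kappa)$; and in the ``else'' case, where $\RR = [1, q]$, only $p = q$ is produced directly.

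The remaining exponents $p \in [1, q)$ are then covered by the continuous embedding $L^q(\Omega) \hookrightarrow L^p(\Omega)$, which holds because $\mu(\Omega) < \infty$, together with the monotonicity of the supremum in \cref{def:NewtDif} with respect to the target norm: the $L^p$-supremum is bounded by a constant times the $L^q$-supremum, and the latter vanishes by the $p = q$ instance already derived. Combining this embedding argument with the previous union gives precisely the set $\PP$ in each of the three subcases. I do not anticipate a serious obstacle; the proof reduces to verifying hypotheses that have already been gathered in \cref{prop:solvability,lemma:ptwconvex,lemma:improvLip} and performing the routine case analysis sketched above, with the only mildly nontrivial point being the elementary algebraic verification that $1/\kappa > q$ (equivalently, $s > q'$) inside the $\kappa > 0$ subcase.
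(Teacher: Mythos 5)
Your proposal is correct and follows essentially the same path as the paper's proof: verify \cref{ass:functionalanalytic} via \cref{prop:solvability,lemma:ptwconvex,lemma:improvLip}, apply \cref{th:mainabstractsemismooth} for the exponents $r\in\RR$, and use the finiteness of $\mu$ (hence $L^q(\Omega)\hookrightarrow L^p(\Omega)$ for $p\le q$) to fill in the lower range $[1,q)$. Your spelled-out case analysis and the observation that $1/s+1/q<1$ together with $\kappa>0$ yields $1/\kappa>q$ are exactly what the paper's terse ``follows immediately from \cref{th:mainabstractsemismooth} and the finiteness of the measure space'' is compressing.
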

\begin{proof}
Since $(\Omega, \Sigma, \mu)$ and the spaces $X := V^*$, $Y := V$, and $U := L^s(\Omega)$ satisfy all of the conditions 
in points \ref{ass:functionalanalytic:i}, \ref{ass:functionalanalytic:ii}, \ref{ass:functionalanalytic:iii}, and \ref{ass:functionalanalytic:iv} 
of \cref{ass:functionalanalytic} (cf.\ the comments after \cref{prop:solvability,lemma:ptwconvex})
and since $S\colon V^* = X \to Y = V$ satisfies 
the conditions in \cref{ass:functionalanalytic}\ref{ass:functionalanalytic:v} for all exponents $r$
in the set $\RR$ defined in \eqref{eq:Rset} by \cref{prop:solvability,lemma:ptwconvex,lemma:improvLip}, the
assertion of the corollary follows immediately from \cref{th:mainabstractsemismooth} 
and the finiteness of the measure space $(\Omega, \Sigma, \mu)$.
\end{proof}

\section{Tangible examples of variational inequalities  covered by our analysis}
\label{sec:4}
To make the results of \cref{sec:2,sec:3} more accessible,
we collect some examples of VIs that are 
covered by \cref{cor:VIsemismooth1}.

\subsection{The scalar Signorini problem}%
\label{subsec:Signorini}%
As a first example, we consider the scalar Signorini problem:
Assume that $\Omega \subset \R^d$, $2 \leq d \in \mathbb{N}$,
is a bounded Lipschitz domain that is endowed with the Lebesgue measure and whose boundary $\partial \Omega$
is decomposed disjointly into three (possibly empty) 
measurable
parts $\Gamma_D$, $\Gamma_N$, and 
$\Gamma_S$. Define 
\begin{equation*}
V :=
\set*{
v \in H^1(\Omega) \given \tr(v) = 0 \text{ a.e.\ on } \Gamma_D 
}\quad\text{and}\quad
\|\cdot \|_V := \|\cdot \|_{H^1(\Omega)},
 \end{equation*}
where $(H^1(\Omega), \|\cdot\|_{H^1(\Omega)})$ is defined as usual
and where $\tr\colon H^1(\Omega) \to L^2(\partial \Omega)$ denotes the trace operator, 
see \cite{Attouch2006,Necas2012}. 
Suppose further that a measurable function $\psi\colon \partial \Omega \to \R$ is given such that
\[
K := 
\set*{
v \in V \given \tr(v) \geq \psi \text{ a.e.\ on } \Gamma_S
}
\]
is nonempty. For right-hand sides $u \in V^*$, 
we consider the Signorini-type VI
\begin{equation}
\label{eq:Signorini}
y \in K,\qquad 
\innerp*{ y}{ v - y }_{H^1(\Omega)} -\dual*{ u}{ v - y}_{V} \geq 0~~\forall v \in K. 
\end{equation}
Note
that $(V, \|\cdot\|_{V})$ 
is a separable Hilbert space that embeds continuously, compactly, and densely into $L^q(\Omega)$ for all 
$2 \leq q < 2d/(d-2)$ due to the properties of $H^1(\Omega)$, see \cite[Theorem~6.1]{Necas2012}.
Here, the right-hand side of the  inequality $2 \leq q < 2d/(d-2)$ is understood as $\infty$ for $d=2$.
From \cite[Theorem~5.8.2]{Attouch2006}, we also obtain that 
\[
\bracks*{ v }_{a_1}^{a_2} = \min\parens*{ a_2, \max \parens*{a_1, v} } \in V
\]
and
\[
\parens*{
\bracks*{ v }_{a_1}^{a_2} , v
}_{H^1(\Omega)}
=
\parens*{
v, \bracks*{ v }_{a_1}^{a_2} 
}_{H^1(\Omega)}
\geq
\parens*{
\bracks*{ v }_{a_1}^{a_2}  , \bracks*{ v }_{a_1}^{a_2} 
}_{H^1(\Omega)}
\]
holds for all $v \in V$ and all  $a_1, a_2 \in [-\infty, \infty]$
with $a_1 \leq 0 \leq a_2$, and that $K$ satisfies \eqref{eq:Kconds}. Since the bilinear form in \eqref{eq:Signorini}
is trivially elliptic, this shows that \eqref{eq:Signorini} 
satisfies all of the conditions in \cref{ass:VIs} (with $f\equiv 0$)
provided $q$ and $s$ are chosen such that $2 \leq q < 2d/(d-2)$,
$1 < s < \infty$, and $s \geq  (1 - 1/q)^{-1}$ holds. 
In combination with the analysis of \cref{sec:3}, this allows us to obtain the following result.

\begin{corollary}[Semismoothness of the solution map of the Signorini problem]%
\label{cor:signorini}%
The problem \eqref{eq:Signorini}  possesses a well-defined solution operator
$S\colon V^* \to V$, $u \mapsto y$.
If $s \in (1, \infty)$ is a fixed exponent satisfying $s > 2d/(d+2)$ and if $\PP$
is defined by 
\begin{equation}
\label{eq:PH1def}
\PP
:=
\begin{cases}
[1,\infty) & \text{ if }  s \geq \frac{d}{2},
\\
\left [1, \parens*{\frac1s - \frac2d}^{-1}\right) & \text{ if }  s < \frac{d}{2},
\end{cases}
\end{equation}
then this solution operator $S\colon V^* \to V$
is Newton differentiable 
w.r.t.\ perturbations in $L^s(\Omega)$
with values in $L^p(\Omega)$ for all $p \in \PP$
when endowed with the 
strong-weak Bouligand differential 
$\partial_B^{sw}S\colon V^* \rightrightarrows\LL(V^*, V)$ defined in \eqref{eq:BouligandDiffVI}.
\end{corollary}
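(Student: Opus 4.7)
The plan is to reduce the corollary to \cref{cor:VIsemismooth1} by verifying that the Signorini problem \eqref{eq:Signorini} satisfies every condition of \cref{ass:VIs} for a suitable range of exponents $q$, and then to match the admissible output exponents coming from \cref{cor:VIsemismooth1} with the set $\PP$ in \eqref{eq:PH1def} through a union-over-$q$ argument.

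First I would check \cref{ass:VIs}. Point~\ref{VIass:item:i} is immediate because a bounded Lipschitz domain with the Lebesgue measure is complete and finite. For point~\ref{VIass:item:ii}, $V$ is a closed subspace of $H^1(\Omega)$, hence a separable Hilbert space, the embedding $V \compactly L^q(\Omega)$ is dense whenever $2 \leq q < 2d/(d-2)$ (understood as $q \in [2,\infty)$ for $d=2$), and the stability of $V$ under the truncations $[\cdot]_{a_1}^{a_2}$ is \cite[Theorem~5.8.2]{Attouch2006}. Point~\ref{VIass:item:iv} reduces to the assertion that the operator induced by the $H^1$-inner product is elliptic with constant $c=1$ and fulfills the truncation identity \eqref{eq:maxprop}, both of which are stated in the display directly after \eqref{eq:Signorini}. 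Point~\ref{VIass:item:v} is trivial with $f \equiv 0$. For \ref{VIass:item:vi}, $K$ is nonempty, closed, and convex by assumption, and \eqref{eq:Kconds} follows from standard lattice properties of $H^1$-functions: $\max(0,z) \in V$ for every $z \in V$ because the trace commutes with the truncations and vanishes on $\Gamma_D$, so for $v \in K$ and $z \in V$ one has $\tr(v + \max(0,z)) \geq \tr(v) \geq \psi$ on $\Gamma_S$, while for $v_1, v_2 \in K$ one has $\tr(\min(v_1, v_2)) = \min(\tr(v_1), \tr(v_2)) \geq \psi$ on $\Gamma_S$. Point~\ref{VIass:item:iii} holds for any $s \geq q'$.

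With \cref{ass:VIs} verified for each admissible pair $(q,s)$ satisfying $2 \leq q < 2d/(d-2)$ and $s \geq q'$, I would invoke \cref{cor:VIsemismooth1} for each such $q$. The hypothesis $s > 2d/(d+2)$ is precisely the statement that there exists $q$ strictly below $2d/(d-2)$ with $s > q'$, so such a $q$ is available. Writing $\kappa_q := 1/s + 2/q - 1$, one notes that $\kappa_q$ is continuous and strictly decreasing in $q$, with $\kappa_q \to 1/s - 2/d$ as $q \uparrow 2d/(d-2)$. When $s \geq d/2$, this limit is nonpositive, so for any prescribed $p < \infty$ one can pick $q$ close enough to $2d/(d-2)$ to have $\kappa_q < 1/p$; applying either the first or the second case of \cref{cor:VIsemismooth1} then yields Newton differentiability with values in $L^p(\Omega)$. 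When $2d/(d+2) < s < d/2$, the limit $1/s - 2/d$ is strictly positive and $\kappa_q$ stays strictly positive; the second case of \cref{cor:VIsemismooth1} produces sets $\PP_q = [1, 1/\kappa_q)$ which exhaust $[1, (1/s - 2/d)^{-1})$ as $q \uparrow 2d/(d-2)$. In both regimes the union over $q$ agrees with the set $\PP$ defined in \eqref{eq:PH1def}, and the union argument is legitimate because Newton differentiability with values in $L^p(\Omega)$ at a fixed $p$ only requires a single admissible $q$ whose associated range contains $p$.

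The only step requiring any thought is the verification of the set-theoretic conditions on $K$ in the presence of the mixed boundary decomposition, since both the Dirichlet trace condition on $\Gamma_D$ and the Signorini inequality on $\Gamma_S$ must be preserved under $\max$ and $\min$; but this is routine via the standard lattice structure of $H^1$-functions and the continuity of the trace. The remaining work is a direct bookkeeping computation with the exponent $\kappa_q$, since the genuine analytic content is already packaged in \cref{cor:VIsemismooth1} and \cref{lemma:truncation}.
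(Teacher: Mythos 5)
Your proof is correct and follows essentially the same route as the paper: you verify \cref{ass:VIs} for \eqref{eq:Signorini} with $f\equiv 0$ and $q$ ranging over $[2, 2d/(d-2))$, then invoke \cref{cor:VIsemismooth1} and take the union over $q$ (the paper phrases this as ``$q := 2d/(d-2)-\varepsilon$ with $\varepsilon > 0$ arbitrarily small''). The bookkeeping with $\kappa_q$ and the treatment of the two cases $s\geq d/2$ and $s< d/2$ match the paper's intent exactly.
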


\begin{proof}
As all of the conditions in \cref{ass:VIs} are satisfied in the situation of \eqref{eq:Signorini}
(with $q := 2d/(d-2) - \varepsilon$, $\varepsilon > 0$ arbitrarily small),
the assertions of the corollary follow immediately from 
\cref{prop:solvability,cor:VIsemismooth1}. 
\end{proof}

\subsection{The classical obstacle problem}\label{subsec:obstacle}
As a second example, we consider the classical obstacle problem:
Suppose that $\Omega \subset \R^d$, $2 \leq d \in \mathbb{N}$,
is a bounded, nonempty, open set that is endowed with the Lebesgue measure. 
We assume that a measurable function $\psi\colon \Omega \to \R$
is given such that the set 
\[
K := \set*{
v \in H_0^1(\Omega)
\given
v \geq \psi
\text{ a.e.\ in } \Omega
}
\]
is nonempty. 
Here, $H_0^1(\Omega)$ is (as usual) defined to be the Hilbert space that is obtained by taking 
the closure of $C_c^\infty(\Omega)$ in $(H^1(\Omega), \|\cdot\|_{H^1(\Omega)})$, 
see \cite[section~5.1]{Attouch2006}. For given $u \in H^{-1}(\Omega) := H_0^1(\Omega)^*$, 
we are interested in the classical obstacle problem 
\begin{equation}
\label{eq:classobstacle}
 y \in K,\qquad 
\dual*{ - \Delta y - u}{ v - y }_{H_0^1(\Omega)} \geq 0~~\forall v \in K,
 \end{equation}
 where $\Delta \in \LL(H_0^1(\Omega), H^{-1}(\Omega))$ denotes the distributional Laplacian. 
Analogously to  \cref{subsec:Signorini}, we obtain
that the space $V := H_0^1(\Omega)$
associated with \eqref{eq:classobstacle} is separable and Hilbert 
and that $H_0^1(\Omega) \hookrightarrow L^q(\Omega)$ holds continuously, compactly, and densely for all
$2 \leq q < 2d/(d-2)$.
(Note that no regularity of $\Omega$ is needed for the embedding here
due to the zero boundary conditions.) From \cite[Theorems~5.3.1, 5.8.2]{Attouch2006}, 
it also again follows that the space $V = H_0^1(\Omega)$,
the operator $A := -\Delta$, and the set $K$
satisfy all of the remaining conditions in points \ref{VIass:item:ii}, \ref{VIass:item:iv}, and \ref{VIass:item:vi} of \cref{ass:VIs}. 
This shows that the standing assumptions of \cref{sec:3} are all satisfied 
by \eqref{eq:classobstacle} (with $f \equiv 0$ and for all 
$q$ and $s$ with $2 \leq q < 2d/(d-2)$, $1 < s < \infty$, and $s \geq  (1 - 1/q)^{-1}$).
Invoking \cref{cor:VIsemismooth1} 
now yields the following counterpart of \cref{cor:signorini}. 

\begin{corollary}[Semismoothness of the solution map of the obstacle problem]%
\label{cor:obstacle}%
The problem \eqref{eq:classobstacle}  possesses a well-defined solution operator
$S\colon H^{-1}(\Omega) \to H_0^1(\Omega)$, $u \mapsto y$.
If $s \in (1, \infty)$ is a fixed exponent satisfying $s > 2d/(d+2)$ and if $\PP$
is defined as in \eqref{eq:PH1def},
then this solution map $S\colon H^{-1}(\Omega) \to H_0^1(\Omega)$
with the 
strong-weak Bouligand differential 
$\partial_B^{sw}S\colon H^{-1}(\Omega) \rightrightarrows\LL(H^{-1}(\Omega) , H_0^1(\Omega))$
is Newton differentiable 
w.r.t.\ perturbations in $L^s(\Omega)$
with values in $L^p(\Omega)$ for all $p \in \PP$.
\end{corollary}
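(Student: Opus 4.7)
The plan is to verify that the classical obstacle problem \eqref{eq:classobstacle} fits the framework of \cref{ass:VIs}, after which both assertions follow immediately from \cref{prop:solvability,cor:VIsemismooth1}. The well-definedness of $S\colon H^{-1}(\Omega) \to H_0^1(\Omega)$ is then just \cref{prop:solvability}, and the Newton differentiability claim reduces to computing the set $\PP$ of \cref{cor:VIsemismooth1} for the chosen Sobolev exponent.

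First I would fix a $q$ slightly below the critical Sobolev exponent, i.e., $q := 2d/(d-2) - \varepsilon$ with $\varepsilon > 0$ so small that $s \geq q'$ (this is possible precisely because $s > 2d/(d+2)$, and $q' \downarrow 2d/(d+2)$ as $\varepsilon \downarrow 0$). The space $V := H_0^1(\Omega)$ is a separable real Hilbert space and the Rellich--Kondrachov theorem gives $V \compactly L^q(\Omega)$ densely (no regularity of $\partial\Omega$ is needed thanks to the homogeneous Dirichlet condition), so \cref{ass:VIs}\ref{VIass:item:i}--\ref{VIass:item:iii} hold once we note that the truncations $[v]_{a_1}^{a_2}$ for $a_1 \leq 0 \leq a_2$ lie in $H_0^1(\Omega)$ by the standard chain rule for Sobolev functions. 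For \ref{VIass:item:iv}, $A = -\Delta$ is elliptic on $H_0^1(\Omega)$ by Poincaré's inequality, and \eqref{eq:maxprop} is the well-known identity $\nabla v \cdot \nabla [v]_{a_1}^{a_2} = |\nabla [v]_{a_1}^{a_2}|^2$ a.e.\ in $\Omega$, which forces both dual pairings on the left of \eqref{eq:maxprop} to coincide with the right-hand side. The condition \ref{VIass:item:v} is trivial with $f \equiv 0$, and \ref{VIass:item:vi} is immediate: $K$ is nonempty, closed, and convex, and the two implications in \eqref{eq:Kconds} follow from $\max(0, z) \in H_0^1(\Omega)$ for $z \in H_0^1(\Omega)$ together with the stability of the obstacle constraint under $\max$ and $\min$.

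Having verified \cref{ass:VIs}, \cref{cor:VIsemismooth1} yields the Newton differentiability of $S$ with values in $L^p(\Omega)$ for every $p$ in the set $\PP$ defined there with $\kappa = 1/s + 2/q - 1$. It remains to match this set with the one in \eqref{eq:PH1def}: since $\varepsilon > 0$ can be chosen arbitrarily small, $\kappa$ can be made arbitrarily close to (and strictly greater than) $1/s - 2/d$ from above. If $s \geq d/2$, then for all sufficiently small $\varepsilon$ we land in the $\kappa \leq 0$ branch, so $\PP = [1, \infty)$ in the limit; if $s < d/2$, then $\kappa > 0$ and $1/\kappa$ tends to $(1/s - 2/d)^{-1}$ from below, so every $p \in [1, (1/s - 2/d)^{-1})$ is captured by choosing $\varepsilon$ small enough. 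This matches \eqref{eq:PH1def} and completes the plan.

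No step is particularly delicate; the only thing to watch is the interplay between the fixed $s$ in the statement and the auxiliary exponent $q$, i.e., that $\varepsilon$ can simultaneously be made small enough to give $s \geq q'$ and to make the endpoint of $\PP$ reach the target value in \eqref{eq:PH1def}. This is guaranteed by the strict inequality $s > 2d/(d+2)$ assumed in the hypothesis.
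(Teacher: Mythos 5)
Your proposal is correct and follows the paper's own route: fix $V = H_0^1(\Omega)$, choose $q$ slightly below the critical Sobolev exponent, verify \cref{ass:VIs} point by point (with the embedding and truncation properties of $H_0^1$, ellipticity of $-\Delta$, and $f \equiv 0$), invoke \cref{prop:solvability,cor:VIsemismooth1}, and exhaust the interval \eqref{eq:PH1def} by letting $\varepsilon \downarrow 0$. You supply more verification detail than the paper (which defers to \cref{subsec:Signorini}) but the structure of the argument is the same, including the correct observation that the strict inequality $s > 2d/(d+2)$ is what makes the limiting argument in $\varepsilon$ work simultaneously for $s \geq q'$, for $\tfrac1s + \tfrac1q < 1$, and for reaching the endpoint of $\PP$.
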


\begin{proof}
This follows immediately from 
\cref{prop:solvability,cor:VIsemismooth1} and the same arguments as in \cref{subsec:Signorini}.
\end{proof}

Note that, in the special case $s \in [3/2, \infty)$ and $d = 3$, \cref{cor:obstacle} yields that 
the solution operator $S\colon H^{-1}(\Omega) \to H_0^1(\Omega)$ of \eqref{eq:classobstacle} is 
Newton differentiable in the sense that, for all $u \in H^{-1}(\Omega)$ and  all $p \in [1, \infty)$, we have
\[
\sup_{G \in \partial_B^{sw}S(u + z)} \frac{\|S(u + z) - S(u) - Gz\|_{L^p(\Omega)}}{\|z\|_{L^s(\Omega)}} \to 0
\quad\text{ for } \|z\|_{L^s(\Omega)} \to 0.
\]
What is remarkable here is that this result holds for all $p \in [1, \infty)$
even in those cases where the
obstacle $\psi $ in \eqref{eq:classobstacle} satisfies
$0 \leq \psi \in H_0^1(\Omega) \setminus L^{6 + \varepsilon}(\Omega)$ for some $\varepsilon > 0$
and where, as a consequence, $K \cap L^{6 + \varepsilon}(\Omega) = \emptyset$
and $S(H^{-1}(\Omega)) \cap  L^{6 + \varepsilon}(\Omega) = \emptyset$ holds. 
Even if there are no states $S(u)$ satisfying $S(u) \in L^p(\Omega)$ for all $p \in [1, \infty)$,
the solution mapping $S\colon H^{-1}(\Omega) \to H_0^1(\Omega)$ of \eqref{eq:classobstacle} can thus still be 
Newton differentiable with values in $L^p(\Omega)$ for all $p \in [1, \infty)$. Capturing these effects is the main motivation 
for considering different regularities for $x$ and $z$ in \cref{def:NewtDif}.

We remark that,
for sufficiently regular obstacles $\psi$ and states $y$, 
the strong-weak Bouligand differential  of the solution map $S\colon H^{-1}(\Omega) \to H_0^1(\Omega)$ of  \eqref{eq:classobstacle}
has been characterized completely in \cite[Theorem~5.6]{Rauls2020}.
We recall this result for the convenience of the reader and since we will use it in \cref{sec:6}.

\begin{theorem}[{\cite[Theorem~5.6]{Rauls2020}}]%
\label{th:RaulsWachsmuth}%
Suppose that $\psi \in C(\overline{\Omega}) \cap H^1(\Omega)$ holds 
and that $\psi < 0$ on $\partial \Omega$.
Assume further that $u \in H^{-1}(\Omega)$ is given such that the 
solution of \eqref{eq:classobstacle} satisfies $y := S(u) \in C(\overline{\Omega})$.
Then the strong-weak Bouligand differential $\partial_B^{sw}S(u)$ of $S\colon H^{-1}(\Omega) \to H_0^1(\Omega)$
at $u$, i.e., the subset of $\LL(H^{-1}(\Omega), H_0^1(\Omega))$ defined by \eqref{eq:BouligandDiffVI} 
with $V:= H_0^1(\Omega)$, is given by 
\begin{equation}
\label{eq:DBSobstacle}
\partial_B^{sw}S(u) := 
\set*{
G_\nu\given
\nu \in \MM_0(\Omega),~ \nu(I(u)) = 0,~\nu = + \infty \text{ on } A_s(u)
}.
\end{equation}
Here, $\MM_0(\Omega)$ denotes the set of all capacitary measures on $\Omega$, see 
\cite[Definition~3.1]{Rauls2020}, $I(u) := \set{\omega \in \Omega \given y(\omega) > \psi(\omega)}$
denotes the inactive set of $u$, 
$A_s(u)$ denotes the strictly active set of $u$ as defined in \cite[section~2.2]{Rauls2020},
and $G_\nu \in \LL(H^{-1}(\Omega), H_0^1(\Omega)) $, $\nu \in \MM_0(\Omega)$,
denotes the solution map $H^{-1}(\Omega) \ni z \mapsto w \in H_0^1(\Omega)$ of the relaxed Dirichlet problem 
\[
w\in H_0^1(\Omega),\quad 
-\Delta w + \nu w = z,
\]
defined as in \cite[Equation~(10)]{Rauls2020}.
\end{theorem}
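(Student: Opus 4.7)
The plan is to prove the characterization by a double inclusion, with the main tools being Mignot's conical differentiability theorem, the structure of Gâteaux points of $S$, and Dal Maso's compactness/density theory for capacitary measures under $\gamma$-convergence. The first preliminary step is to identify $S'(u_n)$ for an arbitrary Gâteaux point $u_n \in \DD_S$: Mignot's theorem gives that the directional derivative $S'(u_n;z)$ is the projection of the solution of $-\Delta w = z$ onto the critical cone, and Gâteaux differentiability forces this cone to be a linear subspace, which in turn forces the biactive set at $u_n$ to have zero capacity. Consequently $S'(u_n) = G_{\nu_n}$ with the capacitary measure $\nu_n := \infty \cdot 1_{A(u_n)}$, so every element of $\partial_B^{sw}S(u)$ is, by \eqref{eq:BouligandDiffVI}, a WOT-limit of operators $G_{\nu_n}$ of this special form.

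For the inclusion ``$\subseteq$'': given $G \in \partial_B^{sw}S(u)$ with an approximating sequence $u_n \to u$ in $H^{-1}(\Omega)$ and $G_{\nu_n} \WOT G$, apply the fundamental compactness of $(\MM_0(\Omega),\gamma)$ to extract a $\gamma$-convergent subsequence $\nu_n \to \nu \in \MM_0(\Omega)$. Stability of solutions of relaxed Dirichlet problems under $\gamma$-convergence (together with the uniform $H_0^1$ bound coming from global Lipschitz continuity of $S$) yields $G_{\nu_n} \WOT G_\nu$, hence $G = G_\nu$. To verify $\nu(I(u)) = 0$, use that under the assumptions $\psi \in C(\overline{\Omega})$, $\psi|_{\partial\Omega} < 0$, and $y = S(u) \in C(\overline{\Omega})$, a standard a priori argument for the obstacle problem with continuous obstacle gives $y_n = S(u_n) \to y$ in $C_{\mathrm{loc}}(\Omega)$; on every compact $K \Subset I(u)$ this forces $K \cap A(u_n) = \emptyset$ for large $n$, so $\nu_n \equiv 0$ on $K$ and the $\gamma$-limit inherits $\nu(K) = 0$. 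Similarly, the defining property of the strictly active set $A_s(u)$ (positive multiplier density) implies $A_s(u) \subset A(u_n)$ quasi-everywhere eventually, a property that is preserved along the $\gamma$-limit and gives $\nu = +\infty$ on $A_s(u)$.

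For the inclusion ``$\supseteq$'': given any $\nu \in \MM_0(\Omega)$ with $\nu(I(u)) = 0$ and $\nu = +\infty$ on $A_s(u)$, one must exhibit a sequence $u_n \in \DD_S$ with $u_n \to u$ in $H^{-1}(\Omega)$ and $S'(u_n) = G_{\nu_n} \WOT G_\nu$. First, invoke the Dal Maso--Mosco density theorem to approximate $\nu$ in the $\gamma$-topology by measures of the form $\nu^{(k)} = \infty \cdot 1_{E_k}$, where the $E_k$ may be chosen compatible with the constraints, i.e., $A_s(u) \subset E_k$ quasi-everywhere and $E_k$ essentially disjoint from $I(u)$. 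Second, for each $k$, construct a right-hand side $u_k \in H^{-1}(\Omega)$ such that $u_k \to u$, such that the solution $y_k = S(u_k)$ satisfies $A(u_k) = E_k$ quasi-everywhere and the strict complementarity condition (ensuring $u_k \in \DD_S$ and $\nu_{u_k} = \nu^{(k)}$). A standard diagonal argument then produces the desired sequence.

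The main obstacle is the realization step in the ``$\supseteq$'' direction: one needs an inverse construction that prescribes both the active set $E_k$ and strict complementarity of the solution while keeping the perturbation $u_k - u$ small in $H^{-1}(\Omega)$. This requires combining the measure-theoretic density in $(\MM_0(\Omega),\gamma)$ with a quantitative perturbation analysis that turns a target active set into an admissible forcing term; the regularity hypotheses on $\psi$ and $y$ enter precisely here to guarantee that such perturbations exist and can be made arbitrarily small, while the definition of $A_s(u)$ ensures that the strictly active portion is preserved automatically and only the ``free'' biactive-or-inactive portion has to be manipulated.
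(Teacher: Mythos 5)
The paper recalls this result directly from \cite[Theorem~5.6]{Rauls2020} and does not prove it, so there is no internal argument to compare against; the evaluation can only be on the proposal's own terms. Your outline captures the natural two-inclusion strategy: Mignot's conical differentiability identifies $S'(u_n) = G_{\nu_n}$ with $\nu_n = \infty\cdot 1_{A(u_n)}$ at Gâteaux points (via linearity of the critical cone and negligibility of the biactive set), and then $\gamma$-compactness of $\MM_0(\Omega)$ together with the Dal Maso--Mosco density theorem are supposed to drive the two inclusions. This skeleton is plausible and likely mirrors the structure of the argument in \cite{Rauls2020}.

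However, the proposal is genuinely incomplete. In the ``$\supseteq$'' direction you explicitly defer the decisive construction --- producing a sequence $u_k \to u$ in $H^{-1}(\Omega)$ of Gâteaux points whose active sets realize a prescribed approximating sequence $E_k$, with strict complementarity, while also ensuring the resulting capacitary measures $\gamma$-converge to the given $\nu$ --- as the ``main obstacle'' and do not carry it out. This realization step is precisely the hard quantitative content of the characterization: it is where $\psi \in C(\overline\Omega)$, $y = S(u) \in C(\overline\Omega)$, and the capacity-theoretic definition of $A_s(u)$ must interact, and without it the inclusion is not proved. The ``$\subseteq$'' direction also contains two unjustified steps: the claim that $\gamma$-convergence of $\nu_n$ together with $\nu_n \equiv 0$ on each compact $K \Subset I(u)$ for large $n$ forces $\nu(I(u)) = 0$ is not automatic, since $\gamma$-convergence does not naively localize and $\nu$ is not a setwise limit of the $\nu_n$; and the assertion that $A_s(u) \subset A(u_n)$ quasi-everywhere for large $n$ (needed to conclude $\nu = +\infty$ on $A_s(u)$) is stated without argument and is not an obvious stability property of strictly active sets under $H^{-1}$-perturbations. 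In short, the high-level plan is the right one, but both halves of the double inclusion have holes, and the half you yourself flag as the main obstacle is exactly the part that would need to be supplied to have a proof.
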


Together, 
\cref{cor:obstacle} and \cref{th:RaulsWachsmuth} 
provide a readily applicable framework for the development of numerical solution algorithms 
based on the semismoothness properties of the solution operator of the obstacle problem,
see \cref{sec:5,sec:6}. In particular, the description of $\partial_B^{sw}S(u)$ in \eqref{eq:DBSobstacle} is 
also amenable to classical adjoint-based approaches as used, for instance, in \cite[section~4]{Christof2018}. 
We would like to point out that the assumption 
$y := S(u) \in C(\overline{\Omega})$ in \cref{th:RaulsWachsmuth}
is not very restrictive 
as the continuity of the solutions of \eqref{eq:classobstacle} can often be ensured easily  by 
invoking $W^{2,p}$-regularity results, see \cite[section~IV-2]{KinderlehrerStampacchia1980}.
If, for example, $\Omega \subset \R^d$ is a bounded convex domain with $d \le 3$ and  
$\psi$ satisfies $\psi \in H^2(\Omega) \subset C(\overline{\Omega})$ and $\psi < 0$ on $\partial \Omega$,
then it follows from the approach in \cite[section~IV-2]{KinderlehrerStampacchia1980}
and  \cite[Theorem~3.2.1.2]{Grisvard1985}
that $S(u) \in H^2(\Omega) \subset C(\overline{\Omega})$ holds for all $u \in L^2(\Omega)$,
and we may deduce from \cref{cor:obstacle} that the solution map $S$ of \eqref{eq:classobstacle}
is Newton differentiable as a function $S\colon L^2(\Omega) \to L^p(\Omega)$ for all $1 \leq p < \infty$
in the sense that 
\[
\sup_{G \in \partial_B^{sw}S(u + z)} \frac{\|S(u + z) - S(u) - Gz\|_{L^p(\Omega)}}{\|z\|_{L^2(\Omega)}} \to 0
\quad\text{for } \|z\|_{L^2(\Omega)} \to 0
\]
holds for all $u \in L^2(\Omega)$ and all $1 \leq p < \infty$ with the differential $\partial_B^{sw}S(u)$ 
given by \cref{eq:DBSobstacle} for all $u \in L^2(\Omega)$. 
Note that, although the operator $S$ is considered purely on $L^2(\Omega)$ here, 
the generalized differential in the semismoothness result is still the whole strong-weak Bouligand 
differential in $\LL(H^{-1}(\Omega), H_0^1(\Omega))$ as defined in \eqref{eq:BouligandDiffVI}.
This shows that, although the control space is typically chosen as a Lebesgue space in applications, 
it is very natural to study generalized differentials of solution operators 
of obstacle-type VIs in the dual of the underlying Hilbert space. 

\subsection{Comments on further examples}
Before we demonstrate that the results of \cref{sec:2,sec:3} 
can indeed be used to design solution algorithms for optimal control problems, 
we would like to emphasize that \cref{th:mainabstractsemismooth,cor:VIsemismooth1} 
are not only applicable to the Signorini problem \eqref{eq:Signorini} and the obstacle problem \eqref{eq:classobstacle},
but also to various other VIs. 
It is, for instance, straightforward to check that the thin obstacle problem
as discussed in \cite[section~8:7]{Rodrigues1987} and 
obstacle-type VIs formulated in $H_0^s(\Omega)$, $0 < s < 1$,
are covered by our analysis, cf.\ \cite[Exemple~3]{Mignot1976} and \cite[Corollary~3.3]{ChristofWachsmuth2018}.
Since we may also choose $K = V$ in \cref{ass:VIs}, \cref{cor:VIsemismooth1} also immediately yields 
semismoothness results for certain semilinear PDEs. (For those, however, the Newton differentiability of the solution map
can also be established easily in a direct manner.) We omit discussing these examples in more detail  here.

\section{An application in optimal control}
\label{sec:5}
In this section, we are concerned with the following setting.

\begin{assumption}[Standing assumptions for the analysis of \cref{sec:5}]%
\label{ass:optctrl}%
Throughout this section, we assume the following:
\begin{enumerate}
\item $(\Omega, \Sigma, \mu)$ is as in \cref{ass:VIs}\ref{VIass:item:i}.
\item\label{item:ass:optctrl:ii}
 $(V, \|\cdot\|_V)$ is a real Hilbert space 
that satisfies the conditions in \cref{ass:VIs}\ref{VIass:item:ii} with $q = 2$.
We interpret the spaces $V$, $L^2(\Omega)$, and $V^*$
as a Gelfand triple, i.e., $V \hookrightarrow  L^2(\Omega) \cong L^2(\Omega)^* \hookrightarrow V^*$. 
\item $A \in \LL(V, V^*)$ satisfies 
\cref{ass:VIs}\ref{VIass:item:iv} and is symmetric, i.e., 
\begin{equation*}
\dual*{ Av}{ w }_V = \dual*{ Aw}{ v }_V\qquad \forall v,w \in V.
\end{equation*}
\item  $(W, \|\cdot\|_W)$ is a real Hilbert space that satisfies $W \hookrightarrow L^2(\Omega)$ continuously and densely. 
We interpret the spaces $W$, $L^2(\Omega)$, and $W^*$
as a Gelfand triple, i.e., $W \hookrightarrow  L^2(\Omega) \cong L^2(\Omega)^* \hookrightarrow W^*$.
\item $L\colon W \to W^*$ is a linear and continuous operator with inverse $P := L^{-1}$.
\item 
 $K \subset V$ is a set that satisfies the conditions in  \cref{ass:VIs}\ref{VIass:item:vi}.
 \item $y_D \in L^2(\Omega)$ is a given desired state and $\alpha > 0$ is a given Tikhonov parameter.
\end{enumerate}
\end{assumption}

In the above situation, we consider the optimization problem
\begin{equation}
\label{eq:O}
\tag{\textup{OC}}
	\left \{\,\,
	\begin{aligned}
		\text{Minimize} \quad & J(y, u) := \frac12 \norm*{ y - y_D}_{L^2(\Omega)}^2 + \frac{\alpha}{2} \dual*{ A u}{ u }_V \\
		\text{w.r.t.}\quad &u \in V,\quad y \in W,\\
		\text{s.t.}\quad & Ly = u \text{ in } W^*\\
		\text{and}\quad&  u \in K.
	\end{aligned}
	\right.
\end{equation}
Note that this problem can be interpreted as an abstract optimal control problem with unilateral 
control constraints posed in the space $V$, see the tangible example in \cref{sec:6}. 
The next result is standard.

\begin{proposition}[Unique solvability of \eqref{eq:O}]%
The optimization problem \eqref{eq:O} possesses a unique solution $\bar u \in V$ with associated 
state $\bar y := P \bar u \in W$. This solution is uniquely characterized by the following stationarity system:
\begin{equation}
\label{eq:Ooptsys}
\begin{aligned}
\bar y, \bar z &\in W, \qquad \bar z = P^*(\bar y - y_D),\qquad \bar y = P \bar u,
\\
\bar u &\in K, \qquad \dual*{ A\bar u + \alpha^{-1} \bar z}{ v - \bar u }_V\geq 0~~\forall v\in K.
\end{aligned}
\end{equation}
\end{proposition}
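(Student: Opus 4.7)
I would reduce \eqref{eq:O} to an unconstrained problem in the control variable alone and then apply the direct method together with a convex variational inequality argument. Since $L \in \LL(W,W^*)$ is an isomorphism with inverse $P$, the state equation $Ly=u$ uniquely determines $y = Pu \in W$, and the chain of embeddings $V \hookrightarrow L^2(\Omega) \cong L^2(\Omega)^* \hookrightarrow W^*$ shows that $Pu$ is well defined and continuous from $V$ into $W$ (and hence into $L^2(\Omega)$). This yields the reduced objective
\[
\tilde J\colon V \to \R,\qquad \tilde J(u) := \tfrac12 \norm{Pu - y_D}_{L^2(\Omega)}^2 + \tfrac{\alpha}{2}\dual{Au}{u}_V,
\]
to be minimized over the nonempty, closed, convex set $K \subset V$.

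For existence and uniqueness, I would note that $\tilde J$ is continuous, and, by \eqref{eq:ellipticity} together with $\alpha > 0$, strongly convex and coercive on the reflexive Hilbert space $V$. Extracting a weakly convergent subsequence from a minimizing sequence in $K$ and using weak lower semicontinuity of the convex continuous functional $\tilde J$ (and weak closedness of $K$) yields a minimizer $\bar u \in K$, while strict convexity forces uniqueness. Setting $\bar y := P\bar u \in W$ then provides the associated state.

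For the characterization, I would compute the Fréchet derivative of $\tilde J$ at $\bar u$ in direction $v \in V$. Using the symmetry of $A$ and the $L^2$-duality inherent in the Gelfand triple for $W$, one obtains
\[
\dual{\tilde J'(u)}{v}_V
= \innerp{Pu - y_D}{Pv}_{L^2(\Omega)} + \alpha \dual{Au}{v}_V
= \dual{\alpha Au + P^*(Pu - y_D)}{v}_V,
\]
where $P^*(Pu - y_D) \in W$ is reinterpreted as an element of $V^*$ via $W \hookrightarrow L^2(\Omega) \hookrightarrow V^*$. Since $\tilde J$ is convex and $K$ is convex, $\bar u$ minimizes $\tilde J$ over $K$ if and only if $\dual{\tilde J'(\bar u)}{v - \bar u}_V \ge 0$ holds for all $v \in K$. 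Dividing by $\alpha$ and introducing the abbreviations $\bar y = P\bar u$ and $\bar z = P^*(\bar y - y_D)$ turns this inequality, together with the defining relations for $\bar y$ and $\bar z$, into precisely \eqref{eq:Ooptsys}; strict convexity moreover makes this system also sufficient for optimality, proving the uniqueness-of-characterization claim.

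The only point requiring care is the simultaneous bookkeeping of the two Gelfand triples, so that $\bar z$ can be read at once as an element of $W$ (making the adjoint equation $\bar z = P^*(\bar y - y_D)$ meaningful) and of $V^*$ (so that the pairing $\dual{A\bar u + \alpha^{-1}\bar z}{v - \bar u}_V$ in \eqref{eq:Ooptsys} is well defined); no genuine obstacle arises, and every remaining step is standard.
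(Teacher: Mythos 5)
Your proposal is correct and follows essentially the same route as the paper: the paper also invokes the direct method of the calculus of variations together with the strict convexity of $J$ for existence and uniqueness, and appeals to standard convex-calculus first-order conditions for the characterization \eqref{eq:Ooptsys}. Your version merely spells out the reduced objective, the coercivity from \eqref{eq:ellipticity}, and the adjoint computation that the paper leaves implicit.
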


Here and in what follows, $P^* \in \LL(W^*, W)$ 
is the adjoint of $P$, i.e.,
\[
\dual*{ w_1^*}{ Pw_2^* }_W = \dual*{ w_2^*}{ P^* w_1^* }_W \qquad \forall w_1^*, w_2^* \in W^*.
\]

\begin{proof}
The unique solvability of \eqref{eq:O} follows from the direct method of 
the calculus of variations and the strict convexity of $J$.
That $\bar u$ is uniquely characterized by \eqref{eq:Ooptsys} is a consequence of 
standard calculus rules for the convex subdifferential. 
\end{proof}

Note that the VI in \eqref{eq:Ooptsys} has precisely the form 
\eqref{eq:V} with $f \equiv 0$ and right-hand side $-\bar z / \alpha$. 
Henceforth, we will denote the solution operator of this 
inequality, i.e., the function that maps a right-hand side $z \in V^*$
(or $z \in L^2(\Omega) \hookrightarrow V^*$ or $z \in W \hookrightarrow L^2(\Omega) \hookrightarrow V^*$, respectively)
to the solution $w \in V$ of the problem 
\begin{equation}
\label{eq:VI22}
w \in K,\qquad 
\dual*{ Aw - z}{ v - w }_V \geq 0\qquad \forall v \in K,
\end{equation}
with $S$. With this notation, the system \eqref{eq:Ooptsys} can be recast as 
\begin{equation}
\label{eq:optimalityconditionshort}
\bar u \in V,\qquad  \bar y, \bar z \in W,\qquad \bar z = P^*(\bar y - y_D),\qquad \bar y = P \bar u,
\qquad \bar u = S\parens*{- \alpha^{-1}\bar z},
\end{equation}
or, equivalently, after eliminating all variables except $\bar y$, as 
\begin{equation}
\label{eq:optimalityconditionveryshort}
\bar y - P S\parens*{\alpha^{-1} P^*( y_D - \bar y)} = 0.
\end{equation}
This reformulation of the necessary and sufficient optimality condition \eqref{eq:Ooptsys}  
can be used as a point of departure for setting up a semismooth Newton method
for the numerical solution of \eqref{eq:O} based on \cref{cor:VIsemismooth1}.

\begin{Algorithm}[Semismooth Newton method for the solution of \eqref{eq:O}]\label{alg:semiNewton}
~\hspace{-10cm}
\begin{algorithmic}[1]
  \STATE Choose an initial guess $y_0 \in L^2(\Omega)$ and a tolerance $\texttt{tol} \geq 0$.
    \FOR{$i = 0,1,2,3,...$}
    \STATE Calculate $\zeta_i :=P^*( y_D - y_i)/\alpha$, $u_i := S(\zeta_i)$, and $\tilde y_i := Pu_i$.
        \IF{$\|y_i - \tilde y_i\|_{L^2(\Omega)} \leq \texttt{tol}$} \label{algo1:step:4}
            \STATE STOP the iteration (convergence reached). 
        \ELSE
        \STATE 
            Choose an element $G_i$ of the differential $\partial_B^{sw}S(\zeta_i)$ defined in \eqref{eq:BouligandDiffVI}.\label{algo1:step:7}
        \STATE Determine $y_{i+1} \in L^2(\Omega)$ by solving the linear equation\label{algo1:step:8}
        \begin{equation*}
        	y_{i + 1} + \alpha^{-1} P G_i P^* y_{i+1} = \tilde y_i + \alpha^{-1} P G_i P^* y_i.
        \end{equation*}
        \ENDIF
    \ENDFOR
\end{algorithmic}
\end{Algorithm}

To see that \cref{alg:semiNewton} is sensible, we note the following.

\begin{lemma}
\label{lemma:Gcoercivity}
Suppose that $u \in V^*$ and $G \in \partial_B^{sw}S(u)$ are given. Then it holds 
\[
\dual*{  z}{ Gz}_V \geq 0\qquad \forall z \in V^*. 
\]
\end{lemma}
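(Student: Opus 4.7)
The plan is to exploit a standard monotonicity property of the solution operator $S$ of \eqref{eq:VI22}, transfer it to the Gâteaux derivatives of $S$ at points of $\DD_S$, and then pass to the limit along the approximating sequence from the definition of $\partial_B^{sw}S(u)$.

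First I would verify that $S \colon V^* \to V$ is monotone in the sense that
\[
\dual*{z_1 - z_2}{S(z_1) - S(z_2)}_V \geq 0 \qquad \forall z_1, z_2 \in V^*.
\]
This is a standard test-function argument: since $S(z_1), S(z_2) \in K$, I may insert $v := S(z_2)$ into the VI for $S(z_1)$ and $v := S(z_1)$ into the VI for $S(z_2)$. Adding the resulting two inequalities and invoking the coercivity \eqref{eq:ellipticity} gives
\[
\dual*{z_1 - z_2}{S(z_1) - S(z_2)}_V \geq \dual*{A(S(z_1) - S(z_2))}{S(z_1) - S(z_2)}_V \geq 0.
\]

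Second, I would choose, via \cref{def:Bsubdifferential}, an approximating sequence $\{u_n\} \subset \DD_S$ with $u_n \to u$ in $V^*$ and $S'(u_n) \WOT G$ in $\LL(V^*, V)$. For fixed $z \in V^*$ and $t > 0$, applying the monotonicity to the pair $u_n + t z$ and $u_n$ yields
\[
\dual*{z}{\frac{S(u_n + tz) - S(u_n)}{t}}_V \geq 0.
\]
Passing to $t \to 0^+$ and using the Gâteaux differentiability of $S$ at $u_n$ (convergence in $V$) together with the continuity of the functional $\dual{z}{\cdot}_V$ on $V$ gives $\dual*{z}{S'(u_n) z}_V \geq 0$ for every $n \in \N$.

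Finally, I would pass to the limit $n \to \infty$. The convergence $S'(u_n) \WOT G$ means precisely that $S'(u_n) z \weakly G z$ in $V$ for every fixed $z \in V^*$, and since $z \in V^*$ acts as a continuous linear functional on $V$ this implies $\dual*{z}{S'(u_n) z}_V \to \dual*{z}{G z}_V$. Nonnegativity is preserved in the limit, which is the claim. I do not expect any real obstacle: the only substantive ingredient is the monotonicity of $S$, and the remaining steps amount to bookkeeping for the Gâteaux derivative and the WOT limit.
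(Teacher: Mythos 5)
Your proof is correct and follows essentially the same route as the paper: both establish the key inequality $\dual*{z}{S'(u_n)z}_V\ge 0$ by inserting $S(u_n)$ and $S(u_n+tz)$ as mutual test functions, invoking the coercivity of $A$, dividing by powers of $t$, letting $t\to 0^+$, and then passing to the WOT limit. The only cosmetic difference is that you isolate the monotonicity of $S$ as a separate intermediate step, whereas the paper carries out the test-function argument inline.
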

\begin{proof}
We first assume that $u \in V^*$ is a point of Gâteaux differentiability of $S\colon V^* \to V$. 
From the definition of $S$ via \eqref{eq:VI22}, we get
\begin{equation*}
\begin{aligned}
	\dual{A S(u + t z) - (u + t z)}{S(u) - S(u + t z)}_V &\ge 0 \\
	\text{and} \qquad
	\dual{A S(u      ) -  u       }{S(u + t z) - S(u)}_V &\ge 0
\end{aligned}
\end{equation*}
for all $z \in V^*$ and $t>0$.
Adding these inequalities leads to
\[
	\dual{t z       }{S(u + t z) - S(u)}_V \ge
	\dual{A (S(u+t z)- S(u))}{S(u + t z) - S(u)}_V \ge 0
	.
\]
Now, we can divide by $t^2$ and pass to the limit $t \to 0^+$
to arrive at
the claim of the lemma for the special case that $G = S'(u)$ is a Gâteaux derivative. 

In the general case
let $u \in V^*$, $G \in \partial_B^{sw}S(u)$, and $z \in V^*$ be given.
Suppose that $\{u_n\} \subset V^*$
is an approximating sequence of Gâteaux points for $G$ as in \eqref{eq:BouligandDiffVI}.
Then $S'(u_n)z \weakly Gz$ in $V$ as $n \to \infty$
and the inequality $\dual*{ z }{ S'(u_n)z}_V \geq 0$ for all $n$ yield
\[
0 \leq 
\dual*{ z }{ S'(u_n)z}_V
\to
\dual*{  z}{ Gz}_V.
\]
\end{proof}

Using \cref{lemma:Gcoercivity}, we can prove that the linear equation 
that has to be solved in \cref{algo1:step:8} of \cref{alg:semiNewton} always possesses
a unique solution. 

\begin{proposition}[Feasibility of the semismooth Newton step]%
	\label{prop:unif_bdd_inv}%
	For every $\zeta \in V^*$
	and
	$G \in \partial_B^{sw}S(\zeta)$,
	the operator
\[
	\operatorname{Id} + \alpha^{-1} PGP^* \colon L^2(\Omega) \to L^2(\Omega)
\]
is an isomorphism and the norm of its inverse is bounded by $1$.
\end{proposition}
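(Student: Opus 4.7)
The plan is to show the lower bound $\norm{(\operatorname{Id}+\alpha^{-1}PGP^*)y}_{L^2(\Omega)} \ge \norm{y}_{L^2(\Omega)}$ (which immediately gives injectivity plus the claimed bound on the inverse on the range) and then to establish surjectivity via Lax--Milgram. First, I would check that $\operatorname{Id}+\alpha^{-1}PGP^*$ is a well-defined bounded operator on $L^2(\Omega)$: the chain $L^2(\Omega) \hookrightarrow W^* \xrightarrow{P^*} W \hookrightarrow L^2(\Omega) \hookrightarrow V^* \xrightarrow{G} V \hookrightarrow L^2(\Omega) \hookrightarrow W^* \xrightarrow{P} W \hookrightarrow L^2(\Omega)$ is continuous, using the Gelfand triples in \cref{ass:optctrl}\ref{item:ass:optctrl:ii} and the analogous one for $W$, together with $G \in \LL(V^*,V)$ and $P,P^* \in \LL(W^*,W)$.

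The core computation is to show $(y,PGP^*y)_{L^2(\Omega)} \ge 0$ for every $y \in L^2(\Omega)$. Set $z := P^*y \in W$ and view $z$ as an element of $L^2(\Omega) \hookrightarrow V^*$. Using that the $L^2$-inner product extends the $W$-/$W^*$-duality and the $V$-/$V^*$-duality on the appropriate pairs, and invoking the adjoint identity $\dual{w_1^*}{Pw_2^*}_W = \dual{w_2^*}{P^*w_1^*}_W$ with $w_1^* = y$ and $w_2^* = Gz \in V \hookrightarrow L^2(\Omega) \hookrightarrow W^*$, I would obtain
\begin{equation*}
(y, PGP^*y)_{L^2(\Omega)}
= \dual{y}{P(Gz)}_W
= \dual{Gz}{P^*y}_W
= (Gz, z)_{L^2(\Omega)}
= \dual{z}{Gz}_V
\ge 0,
\end{equation*}
where the last inequality is \cref{lemma:Gcoercivity}. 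This yields $((\operatorname{Id}+\alpha^{-1}PGP^*)y, y)_{L^2(\Omega)} \ge \norm{y}_{L^2(\Omega)}^2$, and Cauchy--Schwarz then gives $\norm{(\operatorname{Id}+\alpha^{-1}PGP^*)y}_{L^2(\Omega)} \ge \norm{y}_{L^2(\Omega)}$, settling injectivity and the bound $\norm{(\operatorname{Id}+\alpha^{-1}PGP^*)^{-1}}_{\LL(L^2(\Omega),L^2(\Omega))} \le 1$ on the range.

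For surjectivity, I would apply the Lax--Milgram lemma to the bilinear form
\begin{equation*}
a(y_1,y_2) := (y_1, y_2)_{L^2(\Omega)} + \alpha^{-1}(PGP^*y_1, y_2)_{L^2(\Omega)}
\end{equation*}
on $L^2(\Omega)$. Continuity is immediate from the boundedness of $PGP^*$ established in the first step, and coercivity $a(y,y) \ge \norm{y}_{L^2(\Omega)}^2$ is exactly the computation of the previous paragraph (combined with symmetry of the $L^2$-inner product so that $(PGP^*y, y)_{L^2(\Omega)} = (y, PGP^*y)_{L^2(\Omega)} \ge 0$). Note that $a$ need not be symmetric because $G$ need not be, so the non-symmetric version of Lax--Milgram is required; this is the only slightly delicate point of the argument.

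The main obstacle is really just the bookkeeping in the string of duality identifications that turns $(y, PGP^*y)_{L^2(\Omega)}$ into $\dual{z}{Gz}_V$ so that \cref{lemma:Gcoercivity} can be applied; everything else (continuity, Lax--Milgram, the Cauchy--Schwarz bound) is standard once that identity is in place.
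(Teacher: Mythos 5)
Your proof is correct and takes the same route as the paper, which simply cites \cref{lemma:Gcoercivity} and the Lax--Milgram lemma without spelling out the chain of Gelfand-triple identifications; you have filled in exactly those details. The duality computation turning $(y,PGP^*y)_{L^2(\Omega)}$ into $\dual{z}{Gz}_V$ and the subsequent Cauchy--Schwarz estimate are all sound.
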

\begin{proof}
	This follows from \cref{lemma:Gcoercivity} and the lemma of Lax--Milgram.
\end{proof}
 
The local convergence of  \cref{alg:semiNewton}
now follows from standard arguments.

\begin{theorem}[Local superlinear convergence of \cref{alg:semiNewton}]\label{th:NewtonConvergence}%
Let $\bar u \in V$ be the optimal control of \eqref{eq:O} and $\bar y = P\bar u \in W$ the associated optimal state.
There exists $\varepsilon> 0$ such that, for every $y_0 \in L^2(\Omega)$ with 
$\|y_0 - \bar y\|_{L^2(\Omega)} < \varepsilon$,
\cref{alg:semiNewton} with $\texttt{tol} = 0$ either terminates after finitely many steps with the solution of \eqref{eq:O} or 
produces sequences $\{y_i\} \subset L^2(\Omega)$, $\{u_i\} \subset V$, and 
$\{\tilde y_i\} \subset W$ that satisfy 
\begin{equation*}
\begin{aligned}
y_i &\to \bar y \text{ q-superlinearly in } L^2(\Omega),
\\
u_i &\to \bar u  \text{ r-superlinearly in } V, \text{ and }
\\
\tilde y_i &\to \bar y \text{ r-superlinearly in } W. 
\end{aligned}
\end{equation*}
\end{theorem}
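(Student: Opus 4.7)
The strategy is to recast \cref{alg:semiNewton} as a generalized Newton iteration applied to the $L^2$-valued equation
\[
F\colon L^2(\Omega)\to L^2(\Omega),\qquad F(y):=y-PS\bigl(\alpha^{-1}P^*(y_D-y)\bigr),
\]
whose unique root is $\bar y$ by \eqref{eq:optimalityconditionveryshort}, and then to verify the three ingredients of the standard local convergence theorem for semismooth Newton methods: (a) algorithmic consistency, (b) uniform invertibility of the generalized derivative, (c) a little-o remainder estimate at $\bar y$. For (a), setting $M_i:=\operatorname{Id}+\alpha^{-1}PG_iP^*$ and rearranging the linear equation in \cref{algo1:step:8} yields
\(
M_iy_{i+1}=\tilde y_i+\alpha^{-1}PG_iP^*y_i=y_i-F(y_i)+\alpha^{-1}PG_iP^*y_i=M_iy_i-F(y_i),
\)
i.e.\ $y_{i+1}=y_i-M_i^{-1}F(y_i)$. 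Ingredient (b) is supplied verbatim by \cref{prop:unif_bdd_inv}, which even provides the explicit bound $\|M_i^{-1}\|_{\LL(L^2(\Omega),L^2(\Omega))}\le 1$, so no contraction of constants has to be tracked along the iteration.

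For (c), set $\bar\zeta:=\alpha^{-1}P^*(y_D-\bar y)$ and $\zeta_i:=\alpha^{-1}P^*(y_D-y_i)$, so that $\zeta_i-\bar\zeta=-\alpha^{-1}P^*(y_i-\bar y)$. Using $F(\bar y)=0$, a direct computation gives the error identity
\[
M_i(y_{i+1}-\bar y)=M_i(y_i-\bar y)-F(y_i)+F(\bar y)=-P\bigl[S(\zeta_i)-S(\bar\zeta)-G_i(\zeta_i-\bar\zeta)\bigr].
\]
Combining this with $\|M_i^{-1}\|\le 1$ and the boundedness of $P\colon L^2(\Omega)\to L^2(\Omega)$ (which follows from $P\in\LL(W^*,W)$ and the Gelfand triple $W\hookrightarrow L^2(\Omega)\hookrightarrow W^*$) reduces the claim to a remainder estimate for $S$ in $L^2(\Omega)$ with $L^2(\Omega)$-perturbations. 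This is delivered by \cref{cor:VIsemismooth1} applied with $q=2$ (\cref{ass:optctrl}\ref{item:ass:optctrl:ii}) and $s=2$: since $1/s+1/q=1$ is not strictly less than $1$, one lands in the ``else'' branch where $\PP=[1,q]=[1,2]$ contains $p=2$. Consequently,
\[
\bigl\|S(\zeta_i)-S(\bar\zeta)-G_i(\zeta_i-\bar\zeta)\bigr\|_{L^2(\Omega)}=o\bigl(\|\zeta_i-\bar\zeta\|_{L^2(\Omega)}\bigr)
\]
uniformly in $G_i\in\partial_B^{sw}S(\zeta_i)$ as $\zeta_i\to\bar\zeta$, and $\|\zeta_i-\bar\zeta\|_{L^2(\Omega)}\le C\|y_i-\bar y\|_{L^2(\Omega)}$ by continuity of $P^*$. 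Hence $\|y_{i+1}-\bar y\|_{L^2(\Omega)}=o(\|y_i-\bar y\|_{L^2(\Omega)})$ for $y_0$ in a sufficiently small $L^2$-neighbourhood of $\bar y$, giving q-superlinear convergence of $\{y_i\}$.

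The r-superlinear statements then fall out: by the global Lipschitz continuity of $S\colon V^*\to V$ from \cref{prop:solvability} one has
\(
\|u_i-\bar u\|_V\le C\|\zeta_i-\bar\zeta\|_{V^*}\le C'\|y_i-\bar y\|_{L^2(\Omega)},
\)
and the Gelfand embedding $V\hookrightarrow L^2(\Omega)\hookrightarrow W^*$ together with $P\in\LL(W^*,W)$ yields
\(
\|\tilde y_i-\bar y\|_W\le C''\|y_i-\bar y\|_{L^2(\Omega)};
\)
both right-hand sides are majorants that converge q-superlinearly to zero. The step I expect to be the main obstacle is (c), namely checking that the exponent pair $q=s=2$ forced by the $L^2$-nature of the Newton iteration really lands in the admissible range of \cref{cor:VIsemismooth1} and that the $L^2\to L^2$ mapping properties of $P$ and $P^*$ propagate the resulting little-o estimate correctly through the error identity. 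Once these bookkeeping points are confirmed, the remainder of the argument is a textbook contraction-type conclusion, and the ``terminates after finitely many steps'' alternative in the theorem simply covers the degenerate event $F(y_i)=0$ exhibited by the stopping test in \cref{algo1:step:4}.
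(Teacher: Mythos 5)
Your proposal is correct and follows essentially the same route as the paper: reformulate the optimality system as the $L^2(\Omega)$-valued fixed-point equation $F(\bar y)=0$, invoke \cref{cor:VIsemismooth1} (with $q=s=p=2$) for the semismoothness of $S$, invoke \cref{prop:unif_bdd_inv} for the uniform invertibility of the Newton matrices, and conclude via the standard local convergence argument; the r-superlinear claims for $\{u_i\}$ and $\{\tilde y_i\}$ are then obtained exactly as you indicate from \cref{prop:solvability} and the continuity of $P$. One cosmetic remark: the sign on the right-hand side of your error identity should be $+P\bigl[S(\zeta_i)-S(\bar\zeta)-G_i(\zeta_i-\bar\zeta)\bigr]$ rather than $-P[\cdots]$, but this has no effect on the subsequent norm estimates.
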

\begin{proof}
	The operator $L^2(\Omega) \ni z \mapsto z - P S( P^*(y_D - z)/\alpha) \in L^2(\Omega)$
	on the left-hand side of \eqref{eq:optimalityconditionveryshort}
	is semismooth
	since $S$ with $\partial_B^{sw} S$ is semismooth 
	as a function from $L^2(\Omega)$ to $L^2(\Omega)$ 
	(in the classical sense of \cite[Definition~3.1]{Ulbrich2011})
	by
	\cref{cor:VIsemismooth1} and 
	since $P, P^* \in \LL(W^*, W)$.
	By using this semismoothness and the uniformly bounded invertibility in \cref{prop:unif_bdd_inv},
	the local q-superlinear convergence of $\{y_i\}$ in $L^2(\Omega)$ follows from standard arguments,
	see, e.g.,
	\cite[Proof of Theorem~3.4]{ChenNashedQi2000}, \cite[Proof of Theorem~3.13]{Ulbrich2011}.
	The claims for $\{u_i\}$ and $\{\tilde y_i\}$ 
	are obtained from the definitions of these sequences, the convergence of $\{y_i\}$, 
	\eqref{eq:optimalityconditionshort},
	\eqref{eq:SLipschitz}, and the continuity of $P$. 
\end{proof}

\section{Numerical experiments for a special instance of problem \texorpdfstring{\eqref{eq:O}}{(OC)}}
\label{sec:6}
To demonstrate that the superlinear convergence predicted by \cref{th:NewtonConvergence}
can also be observed in practice, we present some numerical experiments.
As a model problem, we consider a special instance of \eqref{eq:O}, namely, 
\begin{equation}
\label{eq:M}
\tag{\textup{M}}
	\left \{\,\,
	\begin{aligned}
		\text{Minimize} \quad & \frac12 \norm*{ y - y_D}_{L^2(\Omega)}^2 + 
		\frac{\alpha}{2}\int_\Omega |\nabla u|^2 \, \d x\\
		\text{w.r.t.}\quad &u \in H_0^1(\Omega),\quad y \in H^1(\Omega),\\
		\text{s.t.}\quad & \mathopen{}-\Delta y + y = u \text{ in }\Omega,~~\partial_n y = 0 \text{ on }\partial \Omega,\\
		\text{and}\quad&  u \geq \psi \text{ a.e.\ in }\Omega.
	\end{aligned}
	\right.
\end{equation}
Here and in what follows,
$\Omega$ is assumed to be the unit square, i.e., $\Omega := (0, 1)^2$, 
equipped with the Lebesgue measure, 
$y_D \in C(\overline{\Omega})$ is a given desired state, 
$\alpha > 0$ is a given Tikhonov parameter,
$|\cdot|$ denotes the Euclidean norm, $\nabla$ is the weak gradient, $\Delta$ is the 
distributional Laplacian, $\partial_n$ denotes the normal derivative,  
$H_0^1(\Omega)$ and $H^1(\Omega)$ are defined as usual, see \cite{Attouch2006},
the governing PDE is understood in the weak sense, i.e., in the sense that 
$(y,v)_{H^1(\Omega)} = (u, v)_{L^2(\Omega)}$ holds for all $v \in H^1(\Omega)$,
and 
$\psi$ is a given function satisfying $\psi \in H^2(\Omega)\subset C(\overline{\Omega})$
and $\psi < 0$ on $\partial \Omega$.

It is easy to check that the problem \eqref{eq:M} indeed fits into the general framework of \cref{sec:5} 
with
$\mu$ as the two-dimensional Lebesgue measure, 
$V := H_0^1(\Omega)$, $W := H^1(\Omega)$, $A := - \Delta \in \LL(H_0^1(\Omega), H^{-1}(\Omega))$,
 $Lw  := (w, \cdot)_{H^1(\Omega)}\in H^1(\Omega)^*$ for all $w \in W$,
and $K := \set{v \in V \given v \geq \psi \text{ a.e.\ in }\Omega }$, cf.\ \cref{sec:4}.
In particular, the map $P\colon W^* \to W$ is nothing else than the 
Riesz isomorphism in $H^1(\Omega)$ in the situation of \eqref{eq:M}, i.e., 
\begin{equation}
\label{eq:MPmapping}
\innerp*{ Pz}{ v }_{H^1(\Omega)} = \dual*{ z}{ v }_{H^1(\Omega)}\qquad \forall v \in H^1(\Omega),
z \in H^1(\Omega)^*
\end{equation}
and the solution operator $S\colon H^{-1}(\Omega) \to H_0^1(\Omega)$ of the VI 
\eqref{eq:VI22} is nothing else 
than the solution mapping $z \mapsto w$ of the classical obstacle problem 
\begin{equation}
\label{eq:Mobstacle}
w \in K,\qquad 
\dual*{ -\Delta w - z}{ v - w }_{H_0^1(\Omega)} \geq 0\quad \forall v \in K
\end{equation}
that we have already considered in \cref{subsec:obstacle}.
 Note that the latter implies, in combination with the convexity of $\Omega$, 
the fact that the spatial dimension in \eqref{eq:M} is two, our assumptions on $\psi$,
and the comments at the end of \cref{subsec:obstacle},
that $S(z) \in C(\overline{\Omega})$ holds for all $z \in L^2(\Omega)$
and that the explicit formula for the strong-weak Bouligand differential $\partial_B^{sw}S(z)$
from \cref{th:RaulsWachsmuth} 
is applicable at all points $z \in L^2(\Omega)$. 
This representation formula
allows us to replace \cref{algo1:step:7,algo1:step:8}
in \cref{alg:semiNewton} with the following, more explicit steps
when we apply this algorithm to solve \eqref{eq:M} 
(with the solution operators $P = P^*$ and $S$ of \eqref{eq:MPmapping} and \eqref{eq:Mobstacle}, respectively, and
$\MM_0(\Omega)$, $I(\cdot)$, $A_s(\cdot)$, and $G_\nu$ as in \cref{th:RaulsWachsmuth}):\pagebreak
 \vspace{0.2cm}
\begin{algorithmic}[1]
    \setcounter{ALC@line}{6}
        \STATE 
        Choose $\nu_i \in \MM_0(\Omega)$ satisfying $\nu_i(I(\zeta_i)) = 0$ and $\nu_i = +\infty$ on $A_s(\zeta_i)$. 
        \STATE Determine $y_{i+1} \in L^2(\Omega)$ by solving the linear equation 
        \begin{equation*}
        	y_{i + 1} + \alpha^{-1} P G_{\nu_i}  P y_{i+1} = \tilde y_i + \alpha^{-1} P G_{\nu_i}  P y_i.
        \end{equation*}
\end{algorithmic}
\vspace{0.3cm}
 
To discretize \eqref{eq:M} and to obtain a finite-dimensional counterpart of \cref{alg:semiNewton}, 
we consider standard piecewise affine finite element functions.
Suppose that a family of triangulations $\{\mathcal{T}_h\}_{0<h\leq h_0}$ of the unit square $\Omega = (0,1)^2$
is given (in the sense of \cite[section~II-2.5]{Glowinski2008}). 
We define 
\[
W_h := \set{ v \in C(\overline{\Omega}) \given v |_{T} \text{ is affine for all } T \in \mathcal{T}_h }
\quad
\text{and}
\quad
V_h := W_h \cap H_0^1(\Omega)
\]
and denote with $\{x_k^h\}$ the set of nodes of $\mathcal{T}_h$ and with $I_h\colon C(\overline{\Omega}) \to W_h$
the nodal interpolation operator associated with $W_h$.
By replacing the spaces $V = H_0^1(\Omega)$ and $W = H^1(\Omega)$ in \eqref{eq:M}
with $V_h$ and $W_h$, respectively, by imposing the constraint $u \geq \psi$ only at the mesh nodes $\{x_k^h\}$,
by replacing $y_D$ with its interpolant $I_h(y_D) \in W_h$, 
and by employing a standard discretization of the governing PDE, 
we obtain a family of discrete optimal control problems of the form 
\begin{equation}
\label{eq:Mh}
\tag{\textup{M$_h$}}
	\left \{\,\,
	\begin{aligned}
		\text{Minimize} \quad & \frac12 \norm*{ y_h - I_h(y_D)}_{L^2(\Omega)}^2 + 
		\frac{\alpha}{2}\int_\Omega |\nabla u_h|^2 \, \d x\\
		\text{w.r.t.}\quad &u_h \in V_h,\quad y_h \in W_h,\\
		\text{s.t.}\quad & (y_h, w_h)_{H^1(\Omega)} = (u_h, w_h)_{L^2(\Omega)}~\forall w_h \in W_h\\
		\text{and}\quad&  u_h(x_k^h) \geq \psi(x_k^h) \text{ for all nodes } x_k^h.
	\end{aligned}
	\right.
\end{equation}
Completely analogously to the continuous setting, it can be proved that \eqref{eq:Mh}
possesses exactly one solution $\bar u_h \in V_h$  which is uniquely characterized by the system 
\begin{equation}
\label{eq:discretesys}
\bar z_h = P_h(\bar y_h -I_h(y_D)),\quad 
\bar y_h = P_h(\bar u_h),
\quad \bar u_h = S_h\parens*{- \alpha^{-1}\bar z_h}.
\end{equation}
Here, the operators $P_h\colon L^2(\Omega) \to W_h$ and $S_h\colon L^2(\Omega) \to V_h$ are defined by 
\[
P_h(z) \in W_h,\qquad \innerp*{ P_h(z)}{ v_h }_{H^1(\Omega)} =(z, v_h)_{L^2(\Omega)}\qquad \forall v_h \in W_h
\]
and
\begin{equation}
\label{eq:Sgdef}
S_h(z) \in K_h,\qquad 
\int_\Omega \nabla S_h(z) \cdot \nabla (v_h - S_h(z))
-
z (v_h - S_h(z) ) \, \d x \geq 0\qquad \forall v_h \in K_h,
\end{equation}
respectively, where
$K_h := \set{v_h \in V_h \given v_h(x_k^h) \geq \psi(x_k^h) \text{ for all nodes }x_k^h}$ is the set of admissible controls. 
Note that, analogously to \eqref{eq:optimalityconditionveryshort}, we can restate \eqref{eq:discretesys} as
\begin{equation}
\label{eq:discretesystemshort}
\bar y_h - P_h S_h\parens*{\alpha^{-1} P_h (I_h(y_D) - \bar y_h)} = 0.
\end{equation}
This again yields an equation that is amenable to a semismooth Newton method. 
Since semismoothness properties of solution operators of finite-dimensional obstacle-type VIs 
have already been studied in detail in various contributions, 
e.g., \cite[chapters~5 and 6]{OutrataBook1998}, \cite[sections~4.3 and 5.3]{Bartels2015},
and \cite[section 5.1]{ChristofTR2020}, we omit discussing the 
derivation of Newton derivatives for the map $S_h$ in this paper and simply state the algorithm that 
is obtained by treating the equation \eqref{eq:discretesystemshort} in exactly the same manner as its 
continuous counterpart \eqref{eq:optimalityconditionveryshort}.\pagebreak

 \begin{Algorithm}[Semismooth Newton method for the solution of \eqref{eq:Mh}]\label{alg:semiNewtonMh}
~\hspace{-10cm}
\begin{algorithmic}[1]
  \STATE Choose an initial guess $y_h^0 \in W_h$ and a tolerance $\texttt{tol} \geq 0$.
    \FOR{$i = 0,1,2,3,...$}
    \STATE Calculate $\zeta_h^i :=(P_h I_h(y_D) - P_h y_h^i)/\alpha$, $u_h^i := S_h(\zeta_h^i)$, and $\tilde y_h^i := P_h u_h^i$.
        \IF{$\|y_h^i - \tilde y_h^i\|_{L^2(\Omega)} \leq \texttt{tol}$}
            \STATE STOP the iteration (convergence reached). 
        \ELSE
        \STATE 
        Choose a subset $\NN_i$ of the set of nodes $\{x_k^h\}$ of $\mathcal{T}_h$ 
        that contains all strictly active nodes 
        of $S_h(\zeta_h^i)$ and none of the inactive nodes of $S_h(\zeta_h^i)$.
        \STATE Determine $y_h^{i+1} \in W_h$ by solving the linear equation 
        \begin{equation}
        \label{eq:discreteNewtonupdate}
        	y_h^{i + 1} + \alpha^{-1} P_h G_{\NN_i}  P_h y_h^{i+1} = \tilde y_h^i + \alpha^{-1} P_h G_{\NN_i}  P_h y_h^i,
        \end{equation}
        where $G_{\NN_i}$ denotes the solution map $L^2(\Omega) \ni z \mapsto w_h \in V_h$ of the problem 
        \begin{equation*}
w_h \in Z_{\NN_i},\qquad 
\int_\Omega \nabla w_h \cdot \nabla  v_h \, \d x 
=
\int_\Omega  z  v_h  \, \d x \quad \forall v_h \in Z_{\NN_i}
\end{equation*}
with 
\[
Z_{\NN_i} := \set{v_h \in V_h \given v_h(x_k^h) = 0 \text{ for all }x_k^h \in \NN_i }.
\]
        \ENDIF
    \ENDFOR
\end{algorithmic}
\end{Algorithm}

Here, the inactive nodes of $S_h(\zeta_h^i)$ are, as usual, defined to be those nodes $x_k^h$ which satisfy
$S_h(\zeta_h^i)(x_k^h) > \psi(x_k^h)$ and the strictly active nodes of $S_h(\zeta_h^i)$
are those nodes  $x_k^h$ at which the (scalar) Lagrange multiplier associated with the 
constraint $S_h(\zeta_h^i)(x_k^h) \geq \psi(x_k^h)$ in \eqref{eq:Sgdef} is nonzero, cf.\ \cite[page~93]{OutrataBook1998}. 
We remark that, in practice, 
the Newton update in \cref{alg:semiNewtonMh} is, of course, not calculated by solving 
\eqref{eq:discreteNewtonupdate} as is. Instead, one rewrites this equation 
as a linear system that involves the mass and stiffness matrices 
associated with $V_h$ and $W_h$ and auxiliary variables that 
decompose the composition $P_h G_{\NN_i}  P_h$ into three sparse linear equations.

The results that we have obtained with \cref{alg:semiNewtonMh} in the situation 
of \eqref{eq:M} (or \eqref{eq:Mh}, respectively) for $\alpha = 10^{-5}$, $y_D(x_1, x_2) := -x_1 - x_2$,
$\psi(x_1, x_2) := -5$, and Friedrichs--Keller triangulations  $\{\mathcal{T}_h\}$ 
with  various widths $h$ can be seen in \cref{tab:1,fig:experimentresults} below. 
In all of the depicted experiments, the initial guess $y_h^0$ was chosen as $I_h(y_D)$,
the tolerance for the semismooth Newton method was $\texttt{tol} = 10^{-7}$,
the set $\NN_i$ was chosen as the set of strictly active nodes for all $i$
(with strictly active defined up to the tolerance $10^{-10}$), 
and the linear systems of equations arising in \cref{alg:semiNewtonMh}  and the discrete obstacle problem 
\eqref{eq:Sgdef} were solved with Matlab2020b's backslash solver and 
quadprog-routine, respectively. For the calculation of the experimental orders of convergence (EOCs)
in  \cref{tab:1}, we used the formula 
\begin{equation}
\label{eq:EOC}
\textup{EOC}_i := 
\log\parens[\bigg]{ \frac{\norm{ v_h^{i}-v_h^{i-1} }}{\norm{ v_h^{i-1}-v_h^{i-2} }} } / \log\parens[\bigg]{ \frac{\norm{ v_h^{i-1}-v_h^{i-2} }}{\norm{ v_h^{i-2}-v_h^{i-3} }} }
\end{equation}
for $i=6$, i.e., for the largest $i$ reached in all numerical experiments. 

\begin{table}[ht]
\caption{Number of performed Newton iterations, final residue $\smash{\|y_h^i - \tilde y_h^i\|_{L^2(\Omega)}}$, 
and experimental orders of convergence (EOCs) for the iterates $\{y_h^i\}$ in $L^2(\Omega)$,
$\{\tilde y_h^i\}$ in $H^1(\Omega)$, and $\{u_h^i\}$ in $H_0^1(\Omega)$
obtained from \cref{alg:semiNewtonMh} for $\alpha$, $y_D$, $\psi$, etc.\
as described above and various mesh widths $h$.}
\label{tab:1}
\centering
\begin{tabular}{c | c | c | c | c | c}
\hline\noalign{\smallskip}
$h$ &  iter. & fin.\ res.\ & $L^2$-EOC $y_h^i$ &  $H^1$-EOC $\tilde y_h^i$ & $H_0^1$-EOC $u_h^i$ \\
\noalign{\smallskip}\hline\noalign{\smallskip}
$\frac{1}{16}$  & $6$ & $7.3259 \cdot 10^{-11}$ & $1.7560$ &  $2.0441$ & $2.1076$   \\[0.1cm]
$\frac{1}{32}$  & $6$ & $3.1726 \cdot 10^{-10}$ & $1.5567 $ &  $1.8209$ & $1.7154$   \\[0.1cm]
$\frac{1}{64}$  & $6$ & $1.0445 \cdot 10^{-9}$ & $1.7783$ &  $2.0074$ & $1.9553$   \\[0.1cm]
$\frac{1}{128}$ & $7$ & $2.7176 \cdot 10^{-9}$ & $1.7344$ &  $2.0158$ & $1.9445$   \\[0.1cm]
$\frac{1}{256}$ & $6$ & $4.1500 \cdot 10^{-8}$ & $1.8745$ &  $2.1046$ & $2.0719$   \\[0.1cm]
$\frac{1}{512}$ & $6$ & $3.7828\cdot 10^{-8}$ & $1.8744$ &  $2.1047$ & $2.0760$   \\[0.1cm]
\noalign{\smallskip}\hline
\end{tabular}%
\end{table}

\begin{figure}[ht]
\centering
\subfigure[Desired state $y_D$]{\includegraphics[width=5.8cm,height=7cm,keepaspectratio]{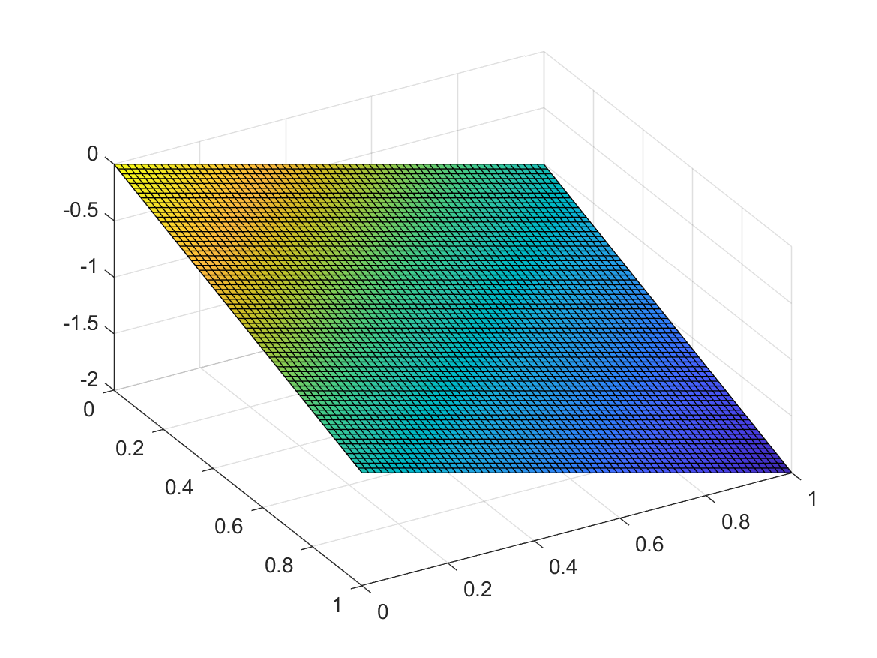}} %
\hspace{0.5cm}%
\subfigure[Final state $\tilde y_h^{i}$]{\includegraphics[width=5.8cm,height=7cm,keepaspectratio]{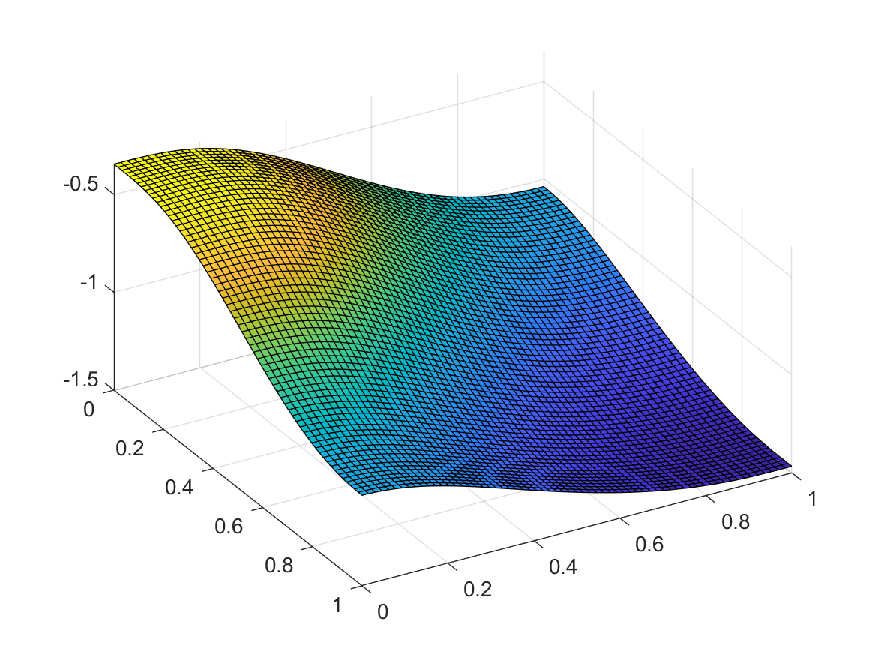}} %
\\%
\subfigure[Final control $u_h^i$]{\includegraphics[width=5.8cm,height=7cm,keepaspectratio]{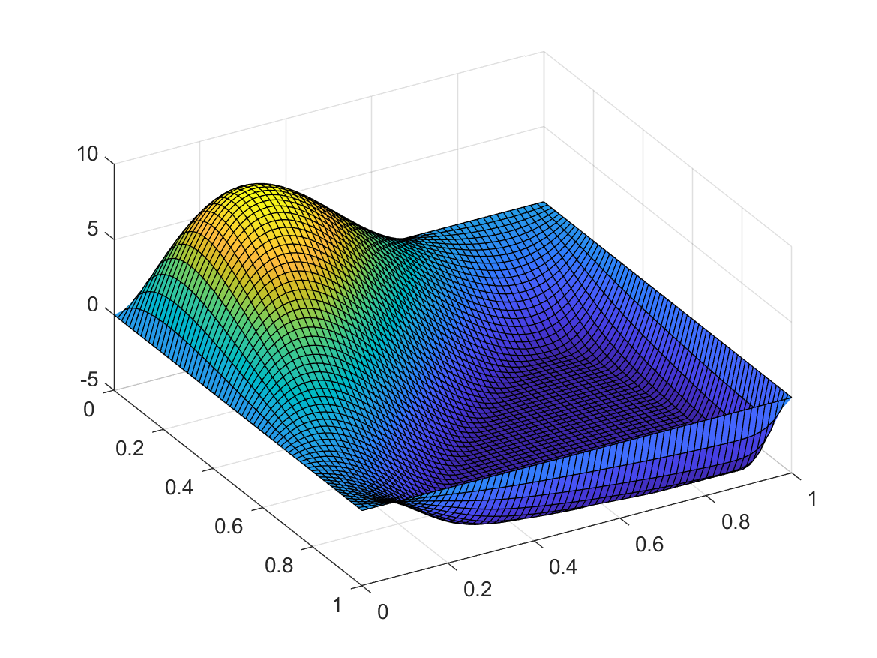}} %
\hspace{0.5cm}%
\subfigure[Final multiplier of $u_h^i$]{\includegraphics[width=5.8cm,height=7cm,keepaspectratio]{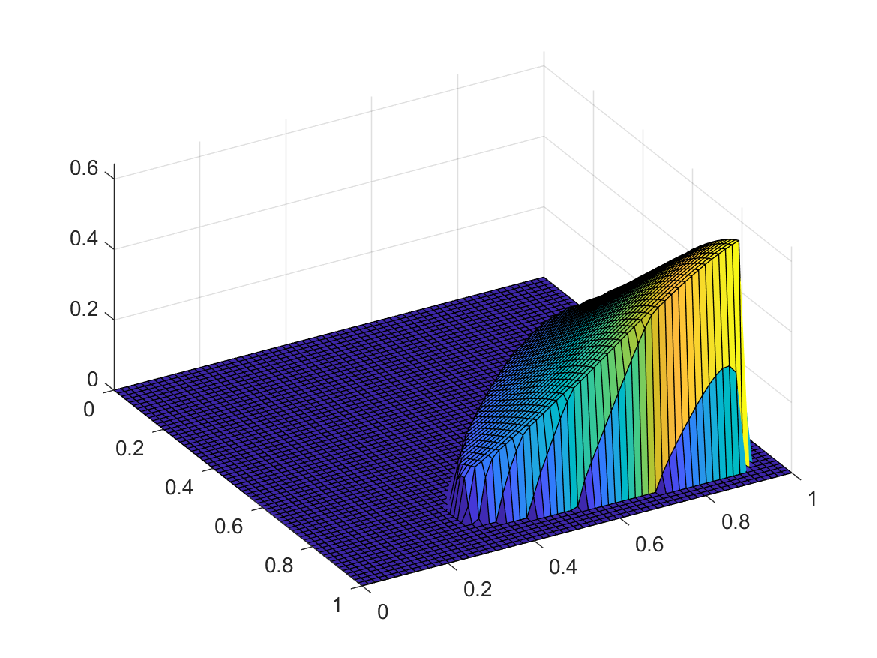}} %
\caption{Numerical results obtained with \cref{alg:semiNewtonMh} for $\alpha$, $y_D$, $\psi$, etc.\ as
described above and $h = 1/64$. The images show the desired state, 
the final iterate $\tilde y_h^i$, the final iterate $u_h^i$, and the Lagrange multiplier of 
$u_h^i = S_h(\zeta_h^i)$ in \eqref{eq:Sgdef}, respectively. Convergence was reached in this test case 
in six iterations with the final residue $\|y_h^i - \tilde y_h^i\|_{L^2(\Omega)} \approx 10^{-9}$, see \cref{tab:1}.}
\label{fig:experimentresults}
\end{figure}

As \cref{tab:1} shows,
\cref{alg:semiNewtonMh} indeed converges mesh-independently, with the 
number of iterations necessary for getting the residue below the tolerance $\texttt{tol} = 10^{-7}$
being nearly constant at six. It can also be observed that the experimental orders of convergence 
obtained from the approximation formula \eqref{eq:EOC} for
$\{y_h^i\}$ in $L^2(\Omega)$,
$\{\tilde y_h^i\}$ in $H^1(\Omega)$, and $\{u_h^i\}$ in $H_0^1(\Omega)$
seem to converge for $h \to 0$. This behavior for vanishing $h$ is the main motivation for 
studying the convergence of \cref{alg:semiNewtonMh} in the function space setting. 
Note that \cref{tab:1} indicates that the order of convergence of the sequence $\{\tilde y_h^i\}$
in $H^1(\Omega)$ is significantly higher than that of the sequence $\{y_h^i\}$ in $L^2(\Omega)$
(around two compared to approximately $1.8$). Whether this effect has roots in some analytical 
properties of \eqref{eq:M} and whether the sequences $\{\tilde y_h^i\}$ and $\{u_h^i\}$
converge not only r-superlinearly but even q-quadratically (as suggested by the last two columns of \cref{tab:1})
is currently unclear. We leave this question for further research. 

We conclude this paper by pointing out two further possible applications 
of the Newton differentiability results that we have established for solution operators 
of VIs with unilateral constraints in \cref{sec:3}. 

First, we expect that \cref{cor:VIsemismooth1} is also helpful for the 
study of optimization algorithms for optimal control problems that are governed 
by obstacle-type VIs. In the finite-dimensional setting, 
bundle-type methods, for example, are often globalized by means 
of a line-search that requires the objective function to be semismooth, see
\cite{Lemarechal1981-2,SchrammZowe1992}. 
With \cref{cor:VIsemismooth1} at hand, which immediately 
yields that the reduced objective function of, for instance, a tracking-type 
optimal control problem for the classical obstacle problem is semismooth, 
it may be possible to use similar techniques in the infinite-dimensional setting, cf.\ \cite{HertleinRaulsUlbrichUlbrich2022,HertleinUlbrich2018}.

A second potential application area for \cref{cor:VIsemismooth1} is the development 
of solution algorithms for obstacle-type quasi-VIs, i.e., 
problems of the form \eqref{eq:classobstacle} in which the obstacle $\psi$ depends implicitly 
on the solution $y$, cf.\ \cite{AlphonseHintermuellerRautenberg2019,ChristofWachsmuth2021,Wachsmuth2020} and the references therein.
For sufficiently regular functions $y \mapsto \psi(y)$,
such problems can be written in the form of a fixed-point equation that involves the solution map $S$
of an obstacle-type VI as studied in \cref{sec:3}. 
Using \cref{cor:VIsemismooth1}, it may be possible to set up a
semismooth Newton method for this fixed-point equation and to thus develop 
numerical solution algorithms whose convergence can be established in function space. 
We remark that, in the finite-dimensional setting, such techniques have already been used, see \cite{Xie2021}. 
We leave both of these topics for future research.


\bibliographystyle{siamplain}
\bibliography{references}
\end{document}